\makeatletter \@addtoreset{equation}{section} \makeatother
\renewcommand\thetable{\thesection.\@arabic\c@table}
\theoremstyle{plain}
\newtheorem{maintheorem}{Theorem}
\newtheorem{maincorollary}{Corollary}
\newtheorem{theorem}{Theorem }[section]
\newtheorem{proposition}[theorem]{Proposition}
\newtheorem{lemma}[theorem]{Lemma}
\newtheorem{corollary}[theorem]{Corollary}
\theoremstyle{definition} \theoremstyle{remark}
\newtheorem{remark}[theorem]{Remark}
\newtheorem{definition}[theorem]{Definition}
\newcommand{\vep}{\varepsilon}
\newcommand{\la} {\lambda}
\newcommand{\C}{\mathbb{C}}
\newcommand{\N}{\mathbb{N}}
\newcommand{\R}{\mathbb{R}}
\newcommand{\topp}{\operatorname{top}}
\newcommand{\vr}{\varphi}
\newcommand{\Ptop}{P_{\topp}}
\newcommand{\cL}{\mathcal{L}}
\newcommand{\cE}{\mathcal{E}}
\def\ds{\displaystyle}
\begin{document}

\title{Linear response, and consequences for differentiability of statistical quantities and Multifractal Analysis }

\author{ Thiago Bomfim and Armando Castro}

\address{Thiago Bomfim, Departamento de Matem\'atica, Universidade Federal da Bahia\\
Av. Ademar de Barros s/n, 40170-110 Salvador, Brazil.}
\email{tbnunes@ufba.br}
\urladdr{https://sites.google.com/site/homepageofthiagobomfim/}

\address{Armando Castro, Departamento de Matem\'atica, Universidade Federal da Bahia\\
Av. Ademar de Barros s/n, 40170-110 Salvador, Brazil.}
\email{armandomath@gmail.com}

\date{\today}

\begin{abstract}
In this article we  initially prove the differentiability of the topological pressure, equilibrium states and
their densities with respect to  smooth expanding dynamical systems and any smooth potential. This is done by proving the regularity of the dominant eigenvalue of the transfer operator with respect to dynamics and potential.
From that, we obtain strong consequences on the regularity of the dynamical system statistical properties, that apply in more general contexts.
Indeed, we prove that the average and variance obtained from the Central Limit
Theorem vary $C^{r-1}$  with respect to the $C^{r}-$expanding dynamics and $C^{r}-$potential, and also, there is a large
deviations principle exhibiting  a $C^{r-1}$ rate with respect to the
dynamics and the potential. An application for multifractal analysis is given. We also obtained  asymptotic formulas for the derivatives of the topological pressure and other thermodynamical quantities.
\end{abstract}

\subjclass[2010]{37D35, 37D20, 60F10, 37D25, 37C30}
\keywords{Expanding dynamics, Linear response formula, Thermodynamical formalism, Large Deviations, Multifractal Analysis.}

\maketitle

\section{Introduction}

The thermodynamical formalism in dynamical systems was initially concerned with the existence and uniqueness of equilibrium states for a family of dynamics and potentials.  Since we obtain uniqueness of equilibrium states for  robust families of dynamics and potentials it is natural to wonder if the topological pressure and respective equilibrium states vary continuously with respect to the dynamic and  potential under deterministic
 perturbations. This type of question is called statistical stability and has its importance both in mathematics and in physics, since very often, the dynamical system that describes a physical situation has a deterministic or random error.

When one obtains results of statistical stability the next natural step is to ask about  highest regularity results, particularly results on the differentiability of thermodynamic objects with respect to the dynamics and the  potential. In such direction, explicit formulas for the derivatives are also welcome.
These differentiability results have been referred as linear response formulas (see e.g. \cite{Rue09}). This has proved to be a hard subject not yet completely understood. In fact,
linear response formulas have been obtained mostly for uniformly hyperbolic diffeomorphisms and flows in \cite{KKPW89,Ji12,Rue97,BL07,GL06}, for the SRB measure of some partially hyperbolic difeomorphisms in \cite{Dol04} and for one-dimensional piecewise expanding and quadratic maps in \cite{Rue05,BS08,BS09,BS12,BBS14}. More recently, \cite{GP17} studied the differentiable dependence of densities of SRB measures for expanding maps in the circle.  The general picture is still far from complete, even in the expanding case, and for any equilibrium states.

In article \cite{BCV16} linear response formulas, continuity and differentiability of several thermodynamical quantities and limit theorems of a robust non-uniformly expanding setting of \cite{VV10,CV13} were obtained. In such setting,  the coexistence of expanding and contracting behavior was admitted, but the potential should be close to a constant function.

In the specific context of expanding dynamics and Holder potentials It is known that there is one unique equilibrium state such that the statistical stability holds (see e.g. \cite{PU10}). 
The linear response results in \cite{BCV16} also hold for any smooth expanding dynamics. But such results only deal with smooth potentials close to a constant function. 
In this paper, we have two main goals.

The first goal is to obtain linear response formulas for any equilibrium state $\mu_{f, \phi}$ associated to a smooth expanding dynamics $f$ and any smooth potential $\phi$. This  is a kind of extension of \cite{BCV16} for \emph{all} smooth potentials, in the context of smooth expanding dynamics. Let us mention that a similar result of this goal has parallelly obtained through other (Lasota-Yorke) techniques in \cite{Se17}. We follow here another approach, using Cones and Projective metrics theory.   
The Birkhoff Cones approach has great advantages for further generalizations. This is because the fact that Lasota-Yorke techniques 
need aditional (hybrid) 
 topological mixing assumptions in order to prove that we 
have a simple dominant eigenvalue for the transfer operator. 
These hybrid assumptions are not necessary while using Birkhoff Cones. For instance, examples in \cite{CV13, BCV16} include bifurcations of Manneville-Poumeau map in which a sink is created, and hence, the perturbed system is not topologically mixing, but Cone techniques naturally apply, and as a consequence, a unique fast mixing equilibrium measure exists for low variation potentials. See also \cite{CN17}, for more examples, in the context of nonhyperbolic diffeomorphisms, where the Cones techniques also were used to study statistical properties of equilibrium states and a topological mixing condition is not assumed a priori. In fact, in that context sinks may also 
appear as we bifurcate a hyperbolic attractor.
Our statements also include some higher order results.

Our second main goal is to
derive the stability of statistical law rates related to the Central Limit Theorem and Large Deviations principle from the linear response formula.
For that, we just use the linear response formula results in a abstract way, so, it is not necessary that
the dynamics is uniformly expanding.

Finally, we apply the stability of statistical quantities involved in the Large Deviations principle to  Multifractal Analysis.
Even though we use specification property, which is peculiar to the expanding dynamics setting, we expect
these last result also to apply to contexts of robust
nonuniformly specification.

The strategy  to obtain our results begins  by studying the spectra of transfer operators associated to the equilibrium measures.
We prove that such operators have the spectral gap property on the smooth functions space. For this result, we use the technique of cones and projective metrics.
Moreover, the spectral gap property occurs uniformly for dynamics and potential that are sufficiently close. However, the dependence of the operator
with respect to the dynamics is not continuous in the norm operator topology, and so, It is not possible to use the classical perturbative spectral theory.
 Nevertheless, the uniformity of the spectral gap enables us to employ a perturbative framework developed in
\cite{GL06} (see also \cite{GL08}), to prove that the topological pressure and the equilibrium states varies $C^{r-1}$ with
respect to $C^{r}$ expanding dynamics and potentials.
Using some ideas present in \cite{BCV16}, we obtain asymptotic formulas for the first derivative of the topological pressure.
As a consequence of these  regularity results, we obtain higher regularity statements for  certain thermodynamic quantities,
limit theorems and  rate of large deviations. Lastly, using \cite{BV15} we apply the results above to the study of the multifractal analysis
of smooth expanding dynamics.

This article is organized as follows. In Section~\ref{Statement of the main results} we provide some definitions
and the statement of  the main results. In Section~\ref{prelim} we recall the necessary framework on cones and projective metrics, and we also prove a uniform spectral gap  property for any transfer operator associated to smooth expanding dynamics and potentials. Section~\ref{Proofs} is devoted to the proof of the main results.
%
%

\section{Statement of the main results}\label{Statement of the main results}

Along the text,  $M$ will  always denote a connected and compact Riemannian manifold, with Riemannian distance $d$.
We denote by $\mathcal{D}^{r}$, $r \geq 1$, the space of $C^{r}$ maps $f : M \rightarrow M$
such that $f$ is expanding. For an expanding map $f$ we mean that there exists $\sigma> 1$ such that  $\sigma < [\sup_{x \in M}\{\|[Df(x)]^{-1}\|\}]^{-1}$. Note that given  an expanding map $f$, there one can take $\sigma$ to be the same for all map in  a sufficiently small neighborhood  of $f$ in $\mathcal{D}^r, r \geq 1$.
Given an integer $r \geq 1$, we will denote by $C^{r}(M , \R)$ the  Banach space of  $C^r$ functions whose domain is $M$.

\begin{remark}
The transfer operator $\mathcal{L}_{f , \phi}$ acting on a function $g : M \rightarrow \C$ is defined as the function
$\mathcal{L}_{f , \phi}g(x) := \sum_{y \in f^{-1}(x)}e^{\phi(y)}g(y)$.
\end{remark}

Let us  recall some necessary definitions. 
Given a continuous map $f:M\to M$ and a potential $\phi:M \to \mathbb R$,
the variational principle for the pressure asserts that
\begin{equation*}
\label{variational principle} \Ptop(f,\phi)=\sup \Big\{
h_\mu(f)+\int \phi \;d\mu : \mu \;\text{is}\; f\text{-invariant}
\Big\}
\end{equation*}
where $\Ptop(f,\phi)$ denotes the topological pressure of $f$ with
respect to $\phi$ and $h_\mu(f)$ denotes the metric entropy. An
\textit{equilibrium state} for $f$ with respect to $\phi$ is an
invariant measure that attains the supremum in the right hand side
above.

A probability measure $\nu$, not necessarily invariant, is
\emph{conformal} if there exists a function $\psi:M\to \R$ such
that $\nu(f(A))=\int_A e^{-\psi} d\nu$ 
for every measurable set $A$ such that $f \mid A$ is injective.
Let $S_n \phi=\sum_{j=0}^{n-1} \phi \circ f^j$ denote the $n$th
Birkhoff sum of a function $\phi$.

It follows from \cite{ruelle89} that given $f \in D^{r}$ and
 a  Holder continuous potential $\phi : M \rightarrow \R$,
 there exists an unique equilibrium state $\mu_{f , \phi}$ for $f$
 with respect to $\phi$. Furthermore, the support of $\mu_{f,\phi}$ is $M$.
  Moreover, $\mu_{f , \phi } = h_{f , \phi}\nu_{f, \phi}$, where
$\mathcal{L}_{f , \phi} h_{f , \phi} = e^{P_{\topp}(f , \phi)}
h_{f , \phi}\;$, $\;\mathcal{L}_{f , \phi}^{\ast} \nu_{f , \phi} =
e^{P_{\topp}(f , \phi)} \nu_{f , \phi}\;$ and $\;\lambda_{f , \phi}
:= e^{P_{\topp}(f , \phi)}$ is the spectral radius of the transfer operator
$\mathcal{L}_{f , \phi}$ acting on $C^{0}(M , \R)$.

\subsection{Linear response formula}

It is well known that the equilibrium state $\mu_{f,\phi}$ and the topological pressure $P_{\topp}(f , \phi)$ depend continuously on the expanding dynamics and are smooth with respect to the potential (see for example \cite{PU10}).
In this paper we will show that these objects are also differentiable with respect to the dynamics $f$. Let us explain what we mean by differentiable dependence, which is the same as the one exposed in section 3 of \cite{BCV16}. 

Recall that $C^r(M,M)$ is a submanifold of
$C^r(M, \mathbb R^m)$ and that for any $f \in C^{r}(M , M)$ the tangent space
$T_{f} C^{r}(M , M)$ is naturally identified with the space
\begin{equation}\label{eq:spaceG}
\Gamma^{r}_{f} := \{H \in C^{r}(M , TM) : H(x) \in T_{f(x)}M, \forall x \in M\}
\end{equation}
of $C^r$-sections (or vector fields) over $f$. The space $\Gamma^{r}_{f}$ is Banachable, that is,
since it is naturally isomorphic to
\begin{equation}\label{eq:spaceGB}
E^{t} = \{H \in C^{r}(M , \mathbb R^{m}) : (f(x), H(x)) \in TM \},
\end{equation}
then it inherits a structure of Banach space.
One should mention the later identification is independent of the embedding of $M$ (cf. page 238 in \cite{Fr79}).
Throughout we will consider the space $\text{Diff}_{\text{loc}}^r(M)$
as a Banach manifold modeled by $C^r(M, \mathbb R^m)$,
from which $T_{f} \text{Diff}_{\text{loc}}^r(M) \simeq E^t \subset C^r(M, \mathbb R^m)$ for every ${f} \in \text{Diff}_{\text{loc}}^r(M)$. Therefore, It make sense 
to consider Fr\'echet (classical) derivatives with respect to the dynamics $f \in C^{r}(M , M)$:

\begin{definition}
Let $F$ be a Banach space. Given $f_0 \in \text{Diff}_{\text{loc}}^r(M)$, we say that a function $\Psi : \text{Diff}_{\text{loc}}^r(M) \subset C^r(M, \mathbb R^m) \to F$
is \emph{Fr\'echet differentiable at $f_0$ } if there exists a continuous linear map
$D\Psi(f_0) : T_{f_0} \text{Diff}_{\text{loc}}^r(M) \subset C^r(M, \mathbb R^m) \to  F$ so that
\begin{equation}\label{eq:deriv}
\lim_{H \to 0} \frac1{\|H\|_{C^r(M, \mathbb R^m)}} \; \| \Psi(f_0 + H) -\Psi (f_0) - D\Psi(f_0) H\|_F=0
\end{equation}
where $H$ is taken to converge to zero in $T_{f_0} \text{Diff}_{\text{loc}}^r(M) = T_{f_0} C^{r}(M , M)$. The functional $D\Psi(f_0)$ is
the derivative of $\Psi$ at $f_0$. We say that $\Psi$ is $C^1$-\emph{differentiable} if the Fr\'echet derivative $D\Psi(f)$ exists at every $f \in \text{Diff}_{\text{loc}}^r(M)$ and the map $\text{Diff}_{\text{loc}}^r(M) \ni f \mapsto D\Psi(f) \in B(C^r(M, \mathbb R^m), F)$
is continuous.
\end{definition}

For notational simplicity, when no confusion is possible we shall omit the
spaces $C^r(M, \mathbb R^m)$ and $F$ in the norms whenever using the expression~\eqref{eq:deriv} to compute derivatives of vector valued maps.

Our first main result assures that the topological pressure and equilibrium state varies smoothly with respect
to the smooth expanding dynamics.

\begin{maintheorem}\label{thm:linres}
If $r \geq 2$ then the following maps are $C^{r-1}$:
{
 \begin{itemize}
  \item[i.] $\mathcal{D}^{r} \times C^{r}(M , \R) \ni (f ,\phi) \mapsto P_{\topp}(f,\phi)$;
  \item[ii.] $\mathcal{D}^{r} \times C^{r}(M , \R) \ni (f ,\phi)  \mapsto h_{f,\phi} \in \mathcal{C}^{0}$;
  \item[iii.] $\mathcal{D}^{r} \times C^{r}(M , \R) \ni (f ,\phi)  \mapsto \nu_{f,\phi} \in [C^{r}]^{\ast}$;
  \item[iv.] $\mathcal{D}^{r} \times C^{r}(M , \R) \ni (f ,\phi)  \mapsto \mu_{f,\phi} \in [C^{r}]^{\ast}$.
 \end{itemize}
 }
Moreover, $D_{f ,\phi}P_{\topp}(f , \phi)$ acting on $( H_{1} , H_{2})$ is given by
$$
\lambda_{f,\phi}^{-1}\sum_{j = 1}^{\deg(f)}  \int e^{\phi(f_{j}(\cdot))}
Dh_{f , \phi|f_{j}(\cdot)}  \odot   [(T_{j|f} \odot H_{1})(\cdot)] \; d\nu_{f , \phi} \;+
$$
$$
 \lambda_{f,\phi}^{-1}\sum_{j =
1}^{deg(f)} \int e^{\phi(f_{j}(\cdot))}    h_{f , \phi}(f_{j}(\cdot))    D\phi_{|f_{j}         (\cdot)} \odot
[(T_{j|f} \odot H_{1})(f_{j}(\cdot))] \;d\nu_{f} + \int H_{2} \; d\mu_{f,\phi},
$$
where $f_{j}$ denote the local inverse branches of $f$ and
 $T_{j|f} \odot H_{1}$ is the derivative of $f \mapsto f_{j}$ acting on $H_{1}$.
\end{maintheorem}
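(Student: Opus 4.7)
The plan is to reduce items (i)--(iv) to the $C^{r-1}$ regularity of the leading spectral data of the transfer operator $\mathcal{L}_{f,\phi}$. By the uniform spectral gap established via Birkhoff cones earlier in the paper, on a neighborhood of any $(f_0,\phi_0)$ one has a spectral decomposition with a simple isolated leading eigenvalue $\lambda_{f,\phi}=e^{P_{\topp}(f,\phi)}$, eigenfunction $h_{f,\phi}$ and dual eigenmeasure $\nu_{f,\phi}$, whose rank-one spectral projection $h_{f,\phi}\otimes\nu_{f,\phi}$ is separated uniformly from the rest of the spectrum. Since $\mu_{f,\phi}=h_{f,\phi}\nu_{f,\phi}$ and $P_{\topp}=\log\lambda_{f,\phi}$, the four statements reduce to joint $C^{r-1}$ dependence of the leading eigenvalue and its spectral projection on $(f,\phi)$; item (iv) then follows from (ii) and (iii) by bilinearity of the product.

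The main technical difficulty is that classical analytic perturbation theory does not apply directly, because $f\mapsto \mathcal{L}_{f,\phi}$ is \emph{not} norm-continuous as an operator on $C^r(M,\R)$: differentiating $\mathcal{L}_{f,\phi}g(x)=\sum_{j}e^{\phi(f_j(x))}g(f_j(x))$ in the $f$-direction costs one derivative through the dependence of the inverse branches $f_j$ on $f$. To circumvent this I would invoke the perturbative framework of Gou\"ezel--Liverani \cite{GL06,GL08}, tailored to exactly this situation. The idea is to work with the scale $C^{r}\subset C^{r-1}\subset\cdots\subset C^{0}$: on each $C^k$ the cone estimates already yield quasi-compactness with uniform spectral gap, while $\mathcal{L}_{f+H,\phi+K}-\mathcal{L}_{f,\phi}$, though unbounded on $C^k$, is $O(\|H\|_{C^r}+\|K\|_{C^r})$ as an operator $C^k\to C^{k-1}$ and admits Fr\'echet derivatives of each order up to $r-1$ at this one-derivative cost per order. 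Plugging this into the abstract statements of \cite{GL06} then gives $C^{r-1}$ regularity of the leading eigenvalue, eigenfunction and dual eigenmeasure. The bulk of the work is in this verification, which hinges on showing that the implicit derivatives $T_{j|f}\odot H_1$, defined via the implicit function theorem applied to $f\circ f_j=\mathrm{id}$, are themselves $C^{r-1}$ in $f$ and induce bounded operators between the right spaces.

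To derive the explicit expression for $D_{f,\phi}P_{\topp}$, I would differentiate the eigenvalue equation $\mathcal{L}_{f,\phi}h_{f,\phi}=\lambda_{f,\phi}h_{f,\phi}$ in the direction $(H_1,H_2)$, pair with $\nu_{f,\phi}$, and use both $\mathcal{L}_{f,\phi}^{*}\nu_{f,\phi}=\lambda_{f,\phi}\nu_{f,\phi}$ and the normalization $\int h_{f,\phi}\,d\nu_{f,\phi}=1$; the $Dh_{f,\phi}$ contributions cancel and yield
\[
D_{f,\phi}P_{\topp}(H_1,H_2)=\lambda_{f,\phi}^{-1}\int \bigl(D_{f,\phi}\mathcal{L}_{f,\phi}(H_1,H_2)\cdot h_{f,\phi}\bigr)\,d\nu_{f,\phi}.
\]
Applying the product rule to $\sum_j e^{\phi(f_j(x))}h_{f,\phi}(f_j(x))$, the $H_1$-variation splits into a piece hitting the exponential weight (yielding the $D\phi$ term in the stated formula) and a piece hitting $h_{f,\phi}\circ f_j$ (yielding the $Dh_{f,\phi}$ term), each one involving the inverse-branch derivative $T_{j|f}\odot H_1$. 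The $H_2$-variation of $\mathcal{L}_{f,\phi}h_{f,\phi}$ equals $\mathcal{L}_{f,\phi}(H_2\,h_{f,\phi})$, so its pairing with $\nu_{f,\phi}$ equals $\lambda_{f,\phi}\int H_2\,d\mu_{f,\phi}$, accounting for the last summand.
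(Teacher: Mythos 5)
Your treatment of items (i)--(iv) via the uniform spectral gap on the scale $C^r(M,\R)\subset\cdots\subset C^0(M,\R)$ together with the Gou\"ezel--Liverani perturbation framework is essentially the same as the paper's: both arguments reduce the regularity of $\lambda_{f,\phi}$, $h_{f,\phi}$, $\nu_{f,\phi}$ to the $C^{r-1}$ regularity of the resolvent $(f,z)\mapsto (zI-\mathcal{L}_{f,\phi})^{-1}\in L(C^r,C^0)$, using Proposition~\ref{ledif} to supply the one-derivative-loss differentiability of $f\mapsto\mathcal{L}_{f,\phi}$. Where you genuinely diverge is in the explicit formula for $D_{f,\phi}P_{\topp}$: the paper differentiates the approximants $P_n(f)=\tfrac1n\log\int\mathcal{L}_{f,\phi}^n 1\,d\nu_{f_0}$ and establishes uniform convergence of $D_fP_n$ (its Claims~1 and~2), whereas you propose the standard eigenvalue-perturbation computation, differentiating $\mathcal{L}_{f,\phi}h_{f,\phi}=\lambda_{f,\phi}h_{f,\phi}$ and pairing with $\nu_{f,\phi}$.

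Your route is shorter and more transparent, but as written it hides a regularity gap: the chain rule $D(\mathcal{L}_{f,\phi}h_{f,\phi})=(D\mathcal{L})h+\mathcal{L}(Dh)$ requires $(f,\phi)\mapsto h_{f,\phi}$ to be Fr\'echet differentiable with values in the domain space of $D\mathcal{L}$, namely $C^r$; but item~(ii) (and the GL06 machinery generally) only yields differentiability of $h_{f,\phi}$ with values in $C^0$, since $P_f\in L(C^r,C^0)$. This loss of derivatives is precisely the obstruction that motivates the paper's detour through the sequence $P_n$. The computation can be rescued while keeping your spirit: differentiate the scalar identity $\lambda_{f,\phi}=\nu_{f,\phi}(\mathcal{L}_{f,\phi}h_{f_0,\phi_0})/\nu_{f,\phi}(h_{f_0,\phi_0})$ at the base point $(f_0,\phi_0)$, where $h_{f_0,\phi_0}\in C^r$ is \emph{fixed}. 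Then only the $C^{r-1}$ dependence of $\nu_{f,\phi}\in[C^r]^*$ from item~(iii) and of $\mathcal{L}_{f,\phi}\in L(C^r,C^{r-1})$ from Proposition~\ref{ledif} enter, all pairings are legal, the $D\nu$ contributions cancel by the same eigenvalue/normalization bookkeeping, and one recovers $D\lambda=\nu_{f_0,\phi_0}\bigl((D_{f,\phi}\mathcal{L})(H_1,H_2)\,h_{f_0,\phi_0}\bigr)$ and hence the stated formula for $D_{f,\phi}P_{\topp}$. With that repair, your derivation is a valid and arguably cleaner alternative to the paper's limiting argument.
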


The proof of the previous result follows from the  spectral gap uniformity
 of the transfer operators acting on $C^{r}(M , \C)$. This uniformity of the  spectral gap follows
 from cones and projective metrics techniques. Note that the differentiability
 of the equilibrium state as a functional acting in $C^{1}(M ,\R)$ is stronger than a pointwise linear response formula result.

\begin{remark}
It follows from \cite{BCV16} a explicit formula for the first order derivative of the maximal entropy measure with respect  to the smooth expanding dynamics.
\end{remark}

The previous theorem implies the following two corollaries:

\begin{maincorollary}\label{cor1}
Let $r \geq 3$. The following functions are $C^{r-2}$:
\begin{itemize}
    \item[i.] $\mathcal{D}^{r} \times \R \ni  (f,t) \mapsto P_{\topp}(f , t\log |\det Df|)$;
    \item[ii.] $\mathcal{D}^{r} \times \R \ni  (f,t) \mapsto P_{\topp}(f , t\log || Df^{\pm1}||)$;
    \item[iii.] $\mathcal{D}^{r}\times C^{r}(M , \R) \ni (f , \phi) \mapsto h_{\mu_{f,\phi}}(f) $.
 \end{itemize}
\end{maincorollary}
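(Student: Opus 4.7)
The strategy is to reduce each item to Theorem~\ref{thm:linres} via a simple chain-rule argument, the only delicate point being a book-keeping of the loss of one derivative when passing from a map $f$ to its differential $Df$.

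For item (i), write $\Phi(f,t) := (f,\, t\log|\det Df|)$ and note that the map $f\mapsto Df$ is continuous linear from $C^{r}(M,\mathbb R^{m})$ into $C^{r-1}(M,\mathbb R^{m\times m})$, while $A\mapsto \log|\det A|$ is $C^{\infty}$ on the open set of invertible matrices; composition with evaluation at $x\in M$ then yields that
\[
\Phi:\mathcal D^{r}\times\mathbb R \longrightarrow \mathcal D^{r-1}\times C^{r-1}(M,\mathbb R)
\]
is $C^{\infty}$. Since $r\ge 3$ we have $r-1\ge 2$, so Theorem~\ref{thm:linres} applied at regularity $r'=r-1$ tells us that $(g,\psi)\mapsto P_{\topp}(g,\psi)$ is $C^{(r-1)-1}=C^{r-2}$ on $\mathcal D^{r-1}\times C^{r-1}(M,\mathbb R)$. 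Composing, $(f,t)\mapsto P_{\topp}(f,t\log|\det Df|)=P_{\topp}\circ\Phi(f,t)$ is $C^{r-2}$. Item (ii) follows identically, replacing $\log|\det Df|$ by $\log\|Df^{\pm1}\|$; one just needs that the latter defines a $C^{r-1}$ potential whose dependence on $f$ factors through $Df$, so that exactly one derivative is lost and the same reduction applies.

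For item (iii), the variational principle together with the fact that $\mu_{f,\phi}$ attains the supremum gives the clean identity
\[
h_{\mu_{f,\phi}}(f)\,=\,P_{\topp}(f,\phi)-\int\phi\,d\mu_{f,\phi}.
\]
By Theorem~\ref{thm:linres}~(i) and (iv), both $(f,\phi)\mapsto P_{\topp}(f,\phi)$ and $(f,\phi)\mapsto \mu_{f,\phi}\in [C^{r}]^{\ast}$ are $C^{r-1}$. The evaluation pairing $[C^{r}]^{\ast}\times C^{r}(M,\mathbb R)\to\mathbb R$, $(\nu,\psi)\mapsto \nu(\psi)$, is a continuous bilinear map and therefore $C^{\infty}$; hence $(f,\phi)\mapsto\int\phi\,d\mu_{f,\phi}$ is $C^{r-1}$, and a fortiori $C^{r-2}$. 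Subtracting gives the claimed regularity of $(f,\phi)\mapsto h_{\mu_{f,\phi}}(f)$.

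The one step that needs some care is item (ii): the operator norm $\|\cdot\|$ is in general only Lipschitz on matrices of dimension $\geq 2$, so in the general Riemannian setting one must either reinterpret $\|Df^{\pm1}\|$ as a smooth surrogate (e.g.\ $(\det(Df^{\top}Df))^{1/2d}$ in the conformal case) or restrict to contexts in which $x\mapsto \|Df^{\pm1}(x)\|$ is genuinely $C^{r-1}$ in $M$ with $C^{\infty}$ dependence on $f$; once that smoothness is granted, the argument proceeds exactly as in item (i). Items (i) and (iii) have no such subtlety, and the whole corollary is essentially a packaging of Theorem~\ref{thm:linres} with the derivative-losing substitutions built into the potentials and into the entropy formula.
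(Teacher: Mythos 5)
Your proposal is essentially the intended argument; the paper states this corollary immediately after Theorem~\ref{thm:linres} without supplying its own proof, and the reduction you give---composing with the derivative-losing substitution $f\mapsto t\log|\det Df|$ (so that $r$ drops to $r-1$ before Theorem~\ref{thm:linres} is invoked), and rewriting the metric entropy via $h_{\mu_{f,\phi}}(f)=P_{\topp}(f,\phi)-\int\phi\,d\mu_{f,\phi}$---is exactly the natural one. Two small remarks. First, your observation that item~(iii) actually comes out $C^{r-1}$, one degree better than stated, is correct: no derivative is lost in the entropy formula, and the pairing of $\mu_{f,\phi}\in[C^r]^\ast$ with $\phi\in C^r$ is a smooth bilinear operation. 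Second, you are right to flag item~(ii): the operator norm $A\mapsto\|A\|$ is in general only locally Lipschitz, not $C^1$, once $\dim M\ge 2$ and eigenvalue crossings can occur, so $x\mapsto\|Df^{\pm1}(x)\|$ need not be $C^{r-1}$ and the clean reduction breaks down. The statement as written is unproblematic only in dimension one (where $\|Df\|=|Df|$ and $\|Df^{-1}\|=|Df|^{-1}$ are smooth) or under an extra nondegeneracy/conformality hypothesis; the paper does not comment on this, and your caveat is the correct one to raise.
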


\begin{corollary}\label{exp}
If $\mathcal{D}^{r} \ni f \mapsto g_{f} \in C^{r}(M , \R)$ is differentiable in $f_{0}$, then the function $\mathcal{D}^{r} \ni f \mapsto \int g_{f} d\mu_{f}$ is differentiable in
$f_{0}$. In particular, if $r \geq 3$ then the following are $C^{r-2}$
 \begin{itemize}
 \item[i.] $\mathcal{D}^{r} \times C^{r}(M , \R) \ni (f , \phi)\mapsto \int \log ||Df(x)|| \, d\mu_{f,\phi} \; ;$
 \item[ii.] $\mathcal{D}^{r} \times C^{r}(M , \R) \ni (f , \phi) \mapsto \int \log ||Df(x)^{-1}||^{-1} \, d\mu_{f,\phi} \; ;$
\item[iii.] $\mathcal{D}^{r} \times C^{r}(M , \R) \ni (f , \phi) \mapsto \int \log |\det Df(x)| \, d\mu_{f,\phi} \; .$
\end{itemize}
\end{corollary}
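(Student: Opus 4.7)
The strategy is to view $f \mapsto \int g_{f}\, d\mu_{f,\phi}$ as the composition of the assignment $f \mapsto (g_{f},\mu_{f,\phi})$ into a product of Banach spaces followed by the continuous bilinear evaluation pairing. The crucial fact from Banach-space calculus is that every bounded bilinear map $B: X \times Y \to Z$ is $C^{\infty}$-Fr\'echet differentiable, with
\[
DB(x_{0},y_{0})(\dot x, \dot y) = B(\dot x, y_{0}) + B(x_{0}, \dot y).
\]
Applied to the pairing $B(g,\nu) := \int g\, d\nu$ from $C^{r}(M,\R) \times [C^{r}(M,\R)]^{*}$ to $\R$, and combined with Theorem~\ref{thm:linres}(iv) (differentiability of $f \mapsto \mu_{f,\phi} \in [C^{r}]^{*}$) together with the hypothesis that $f \mapsto g_{f}\in C^{r}(M,\R)$ is Fr\'echet differentiable at $f_{0}$, the chain rule yields the differentiability of $f \mapsto \int g_{f}\, d\mu_{f,\phi}$ at $f_{0}$, with derivative
\[
H \longmapsto \int \bigl(D_{f}g_{f}|_{f_{0}}\cdot H\bigr)\, d\mu_{f_{0},\phi} + \bigl(D_{f}\mu_{f,\phi}|_{f_{0}}\cdot H\bigr)(g_{f_{0}}).
\]
This settles the abstract part of the corollary.

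For the three explicit integrands in items (i)--(iii), the dependence on $f$ is through $Df$, which has regularity only $C^{r-1}$. Consequently $g_{f}$ naturally sits in $C^{r-1}(M,\R)$, not $C^{r}(M,\R)$, and one derivative must be spent. To absorb this loss I would apply Theorem~\ref{thm:linres}(iv) at regularity class $r-1$ (permissible since $r\geq 3$ gives $r-1\geq 2$), obtaining that $(f,\phi) \mapsto \mu_{f,\phi} \in [C^{r-1}]^{*}$ is $C^{r-2}$; precomposing with the continuous inclusion $C^{r}\hookrightarrow C^{r-1}$ preserves this regularity. On the other side, each of the maps $f\mapsto \log\|Df\|$, $f\mapsto \log\|Df^{-1}\|^{-1}$, $f\mapsto \log|\det Df|$ factors as the continuous linear (hence $C^{\infty}$) map $f\mapsto Df$ from $C^{r}$ to $C^{r-1}$ followed by a $C^{\infty}$ pointwise operation on invertible linear endomorphisms, with uniform expansion guaranteeing we remain in the smooth regime. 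Hence $f\mapsto g_{f} \in C^{r-1}(M,\R)$ is $C^{\infty}$. Iterating the chain rule for the bounded bilinear evaluation $[C^{r-1}]^{*}\times C^{r-1}\to \R$ (itself $C^{\infty}$), the composition $(f,\phi)\mapsto \int g_{f}\, d\mu_{f,\phi}$ inherits the weaker of the two regularities, namely $C^{r-2}$.

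The only genuine obstacle is the bookkeeping of this derivative loss: one must consistently work in $C^{r-1}$ so that $Df$-dependent integrands are honest smooth elements of the ambient function space, at the cost of invoking Theorem~\ref{thm:linres} one rung lower on the regularity ladder. This trade-off explains the gap between the $C^{r-1}$ regularity of $\mu_{f,\phi}$ provided by Theorem~\ref{thm:linres} and the $C^{r-2}$ conclusion obtained here. Everything else is standard Banach-space calculus.
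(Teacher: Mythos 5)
Your proposal is correct, and since the paper gives no explicit proof of this corollary (it is asserted to follow from Theorem~\ref{thm:linres} without further argument), your write-up supplies exactly the bookkeeping the authors leave implicit. The two key points are the ones you identify: the bounded bilinear pairing $[C^{k}]^{*}\times C^{k}\to\R$ is $C^{\infty}$, so the abstract statement is an immediate chain-rule consequence of Theorem~\ref{thm:linres}(iv); and for the concrete integrands, the $f\mapsto Df$ assignment drops one derivative, so one must invoke Theorem~\ref{thm:linres} at level $r-1$ (whence $r\geq 3$) to land the measure in $[C^{r-1}]^{*}$ as a $C^{r-2}$ family and then pair. This cleanly accounts for both the $r\geq 3$ hypothesis and the $C^{r-2}$ conclusion.

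One caveat worth recording, though it is a defect in the corollary's statement rather than in your argument: your claim that $f\mapsto\log\|Df\|$ and $f\mapsto\log\|Df^{-1}\|^{-1}$ factor through a $C^{\infty}$ pointwise operation is not automatic when $\dim M>1$, because the operator norm $A\mapsto\|A\|$ (largest singular value) is only locally Lipschitz in general and is smooth only where the extremal singular value is simple. Uniform expansion gives invertibility and bounds, but not simplicity of singular values, so items (i) and (ii) as written need either a one-dimensional/conformal hypothesis or a smooth surrogate for the operator norm; item (iii) with $\log|\det Df|$ is genuinely $C^{\infty}$ and your argument applies verbatim there.
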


\subsection{Stability of the statistical laws}

It is classically  known that for expanding dynamics  the Central Limit Theorem holds, and may be obtained in a very elegant way through Cone techniques (see \cite{Vi97}, chapter 2, and \cite{Li95}, Corollary 2).

We also obtain results of stability involving the rates contained in the Central Limit Theorem:

\begin{maintheorem}\label{thm:stalaw1}

Let $r \geq 2$ and $(f, \phi) \in \mathcal{D}^{r} \times C^{\alpha}(M , \R)$. If $\psi \in C^{r}(M , \R)$ then either:
\begin{enumerate}
\item[i.] $\psi = u\circ f - u+\int \psi \,d\mu_{f,\phi}$, for some $u \in C^{r}(M , \R)$
\item[ii.] or the sequence of measurable functions $\frac1{\sqrt{n}} \sum_{j = 0}^{n - 1} \psi \circ f^{j}$
    converge in distribution to the normal distribution of average $m=m_{f,\phi}(\psi)=\int\psi \;d\mu_{f,\phi}$ and variance $\sigma^2$ given by
    $$
    \sigma^2=\sigma_{f,\phi}^{2}(\psi)
        = \int \tilde \psi^{2}\;d\mu_{f,\phi} + 2\sum_{n = 1}^{\infty}\int \tilde \psi(\tilde \psi\circ f^{n}) \;d\mu_{f,\phi}>0,
    $$
    where $\tilde \psi=\psi-\int\psi \;d\mu_{f,\phi}$ is a function with zero average  depending of $f$ and $\phi$. In other words, given a interval $A \subset \R$:
    $$
\mu_{f,\phi}\big(\{ x \in M :\frac{1}{\sqrt{n}}\sum_{j = 0}^{n - 1}\psi(f^{j}(x)) \in A \} \big)\xrightarrow[n \to +\infty]{}
\frac{1}{\sqrt{2\pi}\sigma}\int_{A}e^{\frac{-t^{2}}{2\sigma^{2}}}dt.
$$
\end{enumerate}

Moreover, $\mathcal{D}^{r}\times C^{r}(M , \R) \times C^{r}(M , \R) \ni(f,\phi,\psi) \mapsto m_{f,\phi}(\psi)\;$ and
$\;\mathcal{D}^{r}\times C^{r}(M , \R) \times C^{r}(M , \R) \ni (f,\phi,\psi) \mapsto \sigma^2_{f}(\psi)$ are $C^{r-1}$.

\end{maintheorem}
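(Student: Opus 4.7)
The theorem combines the CLT/coboundary dichotomy for $\tilde\psi=\psi-m_{f,\phi}(\psi)$ with the $C^{r-1}$ regularity in $(f,\phi,\psi)$ of the average $m$ and the variance $\sigma^2$. My plan is to obtain the CLT from the uniform spectral gap of the transfer operator established in Section~\ref{prelim} via the Nagaev--Guivarc'h perturbation method, and to extract both regularity statements from Theorem~A together with a Green--Kubo resolvent representation of $\sigma^2$.

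\textbf{CLT and dichotomy.} Let $\bar{\mathcal{L}}_{f,\phi}g:=h_{f,\phi}^{-1}\lambda_{f,\phi}^{-1}\mathcal{L}_{f,\phi}(h_{f,\phi}g)$ be the normalized transfer operator: it fixes $\um$, preserves $\mu_{f,\phi}$ in the dual, and has a spectral gap on $C^r(M,\C)$. For small $t\in\R$ the twisted operator $g\mapsto\bar{\mathcal{L}}_{f,\phi}(e^{it\tilde\psi}g)$ depends analytically on $t$ and admits a simple leading eigenvalue $z(t)=1-\tfrac12\sigma^2t^2+o(t^2)$ by analytic perturbation. A standard characteristic-function computation, as in \cite{Vi97,Li95}, yields the CLT together with the Green--Kubo formula
\[
\sigma^2=\int\tilde\psi^2\,d\mu_{f,\phi}+2\sum_{n\geq 1}\int\tilde\psi\cdot\bar{\mathcal{L}}_{f,\phi}^n\tilde\psi\,d\mu_{f,\phi},
\]
absolutely convergent thanks to the spectral gap on the zero-mean subspace. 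If $\sigma^2=0$, the identity $\bar{\mathcal{L}}(v\circ f)=v$ and iteration produce the candidate $u:=\sum_{n\geq 1}\bar{\mathcal{L}}_{f,\phi}^n\tilde\psi$, which lies in $C^r(M,\R)$ by the same spectral gap and satisfies $u\circ f-u=\tilde\psi$, giving alternative (i).

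\textbf{Regularity of the mean.} Item~(iv) of Theorem~A states that $(f,\phi)\mapsto\mu_{f,\phi}\in[C^r(M,\R)]^*$ is $C^{r-1}$. Consequently $(f,\phi,\psi)\mapsto m_{f,\phi}(\psi)=\mu_{f,\phi}(\psi)$ is $C^{r-1}$ by the chain rule for bilinear evaluation of a $C^{r-1}$ functional on its $C^r$ argument.

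\textbf{Regularity of the variance and main obstacle.} By the spectral gap, the Green--Kubo series can be resummed as
\[
\sigma^2_{f,\phi}(\psi)=\int\tilde\psi^2\,d\mu_{f,\phi}+2\int\tilde\psi\cdot\bar{\mathcal{L}}_{f,\phi}(I-\bar{\mathcal{L}}_{f,\phi})^{-1}\tilde\psi\,d\mu_{f,\phi},
\]
with $(I-\bar{\mathcal{L}}_{f,\phi})^{-1}$ acting on the zero-mean invariant subspace $X_0(f,\phi):=\{g\in C^r(M,\R):\mu_{f,\phi}(g)=0\}$. The key step is to show that the spectral projection $\Pi_0(f,\phi)$ onto $X_0(f,\phi)$ and the restricted resolvent $(I-\bar{\mathcal{L}}_{f,\phi})^{-1}\Pi_0(f,\phi)$ depend $C^{r-1}$ on $(f,\phi)$; once this is in hand, $\sigma^2_{f,\phi}(\psi)$ is the integral, against $\mu_{f,\phi}$, of a $C^{r-1}$ family of integrands in $(f,\phi,\psi)$, and is therefore $C^{r-1}$. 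The main obstacle is precisely this resolvent regularity: $\bar{\mathcal{L}}_{f,\phi}$ is not norm-continuous in $f$ on $C^r$, so classical Kato--Rellich perturbation theory does not apply. The remedy is the Gou\"ezel--Liverani framework \cite{GL06,GL08} already employed for Theorem~A: the uniform spectral gap in $C^r$ combined with the $C^{r-1}$ dependence of $\bar{\mathcal{L}}_{f,\phi}$ viewed from $C^r$ into a weaker norm transfers to the eigenprojections and complementary resolvents, exactly as for the top eigenvalue handled in Theorem~A.
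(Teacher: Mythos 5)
Your proposal is correct and, for the two regularity claims (on $m_{f,\phi}(\psi)$ and $\sigma^2_{f,\phi}(\psi)$), follows essentially the same route as the paper: both derive $C^{r-1}$-smoothness of the mean from item (iv) of Theorem~\ref{thm:linres}, and both resum the Green--Kubo series into a resolvent of the normalized transfer operator and invoke the Gou\"ezel--Liverani scheme (Theorem~\ref{dif1}) to overcome the lack of norm-continuity of $f\mapsto\mathcal L_{f,\phi}$ on a fixed $C^r$ space. The paper's one refinement here, which you do not spell out, is to conjugate by the family of isomorphisms $T_{f,\phi}g=(g-\int g\,d\mu_{f,\phi})h_{f,\phi}$ so that all resolvents act on a \emph{fixed} zero-mean subspace $E_{0,f_0,\phi_0}$; working with a $(f,\phi)$-dependent subspace as you propose requires the same bookkeeping in disguise, so the two are equivalent in substance.

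Where you genuinely diverge from the paper is in alternative (i). You obtain $u\in C^r$ directly from the Poisson-equation series $u=\sum_{n\geq 1}\bar{\mathcal L}_{f,\phi}^n\tilde\psi$, using the $C^r$ spectral gap to guarantee convergence, and the martingale (Gordin) decomposition $\tilde\psi=m+u\circ f-u$ with $\bar{\mathcal L}_{f,\phi}m=0$, $\sigma^2=\int m^2\,d\mu_{f,\phi}$, to conclude $m=0$ when $\sigma^2=0$. You state this as if it were immediate; the intermediate identity $\sigma^2=\int m^2\,d\mu_{f,\phi}$ deserves at least a reference, since it is precisely what converts ``$\sigma^2=0$'' into ``$\tilde\psi$ is a coboundary.'' The paper instead shows that $\sigma^2=\mathcal E''_{f,\phi,\psi}(0)=0$ forces the free energy to be affine, hence $\psi$ is cohomologous to a constant in the H\"older category, hence $S_n\psi(p)=n\int\psi\,d\mu_{f,\phi}$ on periodic orbits, and then applies a bespoke Livsic-type regularity result (Proposition~\ref{propcohomo}) to upgrade $u$ to $C^r$. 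Your direct series argument is shorter and self-contained given the spectral gap; the paper's route is longer but yields Proposition~\ref{propcohomo} as a byproduct of independent interest and links naturally to the strict convexity of $\mathcal E$ used later in Theorem~\ref{thm:differentiability.LDP1}. Both are valid, and neither has a real gap beyond the compression noted above.
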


 Even though the first part of the  statement 
of  Theorem B has some overlap with the classical Central Limit Theorem in the context of expanding maps, the $C^r$ regularity  of $u$ in item i above, is a more recent achievement  for non invertible maps, whose exact  literature reference was not very clear for us. So, we give an independent proof for it (see Proposition \ref{propcohomo} in this paper). On the other hand, the final part of the statement about the $C^r$ dependence of the average and variance of an observable with respect to the dynamics and  the potential is entirely novel.

In particular, we obtain consequences for the study of cohomological equation. Recall that $\psi : M \rightarrow \R$ is cohomologous to a constant for a dynamic
$f : M \rightarrow M$ if there exists a constant $c$ and a continuous function $u : M \rightarrow \R$ such that the cohomological equation $\psi = u\circ f - u + c$ holds.
If $u \in L^{2}(\mu)$ for some probability $\mu$ and the previous equality holds in $\mu-$a.e.p., we say that
$\psi$ is cohomologous to a constant in $L^{2}(\mu)$. When $u \in C^{r}(M, \R)$ and the previous equality holds,  we say that $\psi$ is cohomologous to a constant in $C^{r}(M , \R)$.

Cohomological equations appear in
several problems as smoothness of invariant measures
and conjugacies, mixing properties of suspended flows, rigidity of group actions, and
geometric rigidity questions such as the isospectral problem.
For some other important applications of cohomological equation see for instance \cite{AK11,W13}.

\begin{maincorollary}\label{coho1}
Let $r \geq 2$. If $\psi \in C^{r}(M,\R)$  is not cohomologous to a constant for $f \in \mathcal{D}^{r}$
then the same property holds for all $\hat{f}$ close enough to $f$. As consequence, the sets
$$
\{
    \hat{f}\in \mathcal{D}^{r} : \psi \text{ is cohomologous to a constant } \text{ for } \hat{f}
\} \;\;\text{ and }
$$
$$
\{
    \hat{\psi}\in C^{r}(M,\R) :  \hat{\psi} \text{ is cohomologous to a constant } \text{ for } f
\}
$$
are closed.
\end{maincorollary}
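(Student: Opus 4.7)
The plan is to deduce the corollary directly from Theorem~\ref{thm:stalaw1}, using its dichotomy to encode the cohomological condition as the vanishing of the variance $\sigma^{2}$, and then invoking the $C^{r-1}$ (in particular, continuous) dependence of $\sigma^{2}$ on $(f,\phi,\psi)$ established in the same theorem. First I would fix once and for all an auxiliary potential, say $\phi\equiv 0\in C^{r}(M,\R)$, so that $\sigma^{2}_{f,\phi}(\psi)$ becomes a continuous function of the pair $(f,\psi)\in\mathcal{D}^{r}\times C^{r}(M,\R)$. By Proposition~\ref{propcohomo}, if $\psi\in C^{r}(M,\R)$ is a continuous coboundary for $f\in\mathcal{D}^{r}$ (in the sense of the definition preceding the statement) then the transfer function is automatically of class $C^{r}$; hence the notion of ``cohomologous to a constant'' used in the corollary coincides, for $C^{r}$ observables, with the $C^{r}$-coboundary condition appearing in alternative (i) of Theorem~\ref{thm:stalaw1}. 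Consequently one obtains the equivalence
\[
\psi\text{ is cohomologous to a constant for }f \iff \sigma^{2}_{f,\phi}(\psi)=0.
\]

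Applying this equivalence, the hypothesis that $\psi$ is not cohomologous to a constant for $f$ translates into $\sigma^{2}_{f,\phi}(\psi)>0$. By the $C^{r-1}$ regularity of $(f,\phi,\psi)\mapsto\sigma^{2}_{f,\phi}(\psi)$ supplied by Theorem~\ref{thm:stalaw1}, fixing $\phi$ and $\psi$ yields a neighborhood of $f$ in $\mathcal{D}^{r}$ on which $\sigma^{2}_{\hat f,\phi}(\psi)$ remains strictly positive, and a second application of the equivalence shows that $\psi$ is not cohomologous to a constant for any such $\hat f$. This is the first assertion.

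The two closedness statements then follow by dualizing. With $\psi$ fixed, the preceding paragraph shows that $\{\hat f\in\mathcal{D}^{r}:\sigma^{2}_{\hat f,\phi}(\psi)>0\}$ is open, so its complement is closed and, by the equivalence, coincides with $\{\hat f\in\mathcal{D}^{r}:\psi\text{ is cohomologous to a constant for }\hat f\}$. Analogously, with $f$ fixed, the continuity of $\hat\psi\mapsto\sigma^{2}_{f,\phi}(\hat\psi)$ on $C^{r}(M,\R)$ makes its zero set closed, and by the equivalence this zero set is exactly $\{\hat\psi\in C^{r}(M,\R):\hat\psi\text{ is cohomologous to a constant for }f\}$.

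The main obstacle I anticipate is the compatibility between the a priori weaker notion of ``continuous coboundary'' used in the definition and the ``$C^{r}$ coboundary'' appearing in alternative (i) of Theorem~\ref{thm:stalaw1}: without the Livshitz-type regularity upgrade of Proposition~\ref{propcohomo}, one could only rule out a $C^{r}$ transfer function for perturbed $\hat f$, not a merely continuous one. Once that regularity result is in place the remainder is essentially a standard open/closed reading of the continuity of $\sigma^{2}$, so all the real work has been absorbed into Theorem~\ref{thm:stalaw1} and Proposition~\ref{propcohomo}.
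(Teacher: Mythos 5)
Your proof is correct and follows essentially the same route as the paper's: both encode the property ``cohomologous to a constant'' as the vanishing of the variance $\sigma^{2}_{f,\phi}(\psi)$ and then read off openness/closedness from the continuity of $(f,\phi,\psi)\mapsto\sigma^{2}_{f,\phi}(\psi)$, with the Livshitz-type upgrade (Proposition~\ref{propcohomo}) supplying the compatibility between the continuous-coboundary notion in the statement and the $C^{r}$ one in Theorem~\ref{thm:stalaw1}. The only cosmetic difference is that the paper reaches the same equivalence by citing the chain ``cohomologous in $L^{2}(\mu_{f,\phi})$ $\Leftrightarrow$ cohomologous in $C^{r}$ $\Leftrightarrow$ $\sigma^{2}=0$'' established inside the proof of Theorem~\ref{thm:stalaw1} (which itself invokes Proposition~\ref{propcohomo}), whereas you apply Proposition~\ref{propcohomo} directly from the continuous coboundary by passing to periodic-orbit sums; the mathematical content is the same.
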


\begin{remark}
The classical statistical stability is sufficient to obtain Corollary \ref{coho1}. Of course, the linear response formula result, which is a stronger result,  also implies it. 
\end{remark}

\subsection{Large deviations stability}

It is well known that expanding dynamics satisfies a large deviations principle for Birkhoff's sums. In other words, given a continuous $\psi : M \rightarrow \R$  the rates
$$
\limsup_{n\to +\infty}\frac{1}{n}\log\mu_{f,\phi}\big( \{x \in M :  |\frac{1}{n}\sum_{i=0}^{n-1}\psi(f^{i}(x)) - \int \psi d\mu_{f,\phi}| \geq \vep \} \big)
\quad \text{and}
$$
$$
 \liminf_{n\to +\infty}\frac{1}{n}\log\mu_{f,\phi}\big( \{x \in M :  |\frac{1}{n}\sum_{i=0}^{n-1}\psi(f^{i}(x)) - \int \psi d\mu_{f,\phi}| > \vep \} \big)
$$ are well understood as functions of the error $\vep > 0$ (for details see e.g. \cite{DK01}).

It is known that for expanding dynamics  the free energy function is well defined. More precisely, given $f \in \mathcal{D}^{r}, \phi ,\psi \in C^{r}(M, \R)$ and $t \in \R$, the expression
$$
\cE_{f,\phi,\psi}(t)
    :=\lim_{n\to\infty} \frac1n \log \int e^{t S_n \psi} \; d\mu_{f,\phi}
   = P_{\mbox{top}}(f, \phi +t\psi) -P_{\mbox{top}}(f, \phi)
$$
 is well defined.
Using that if $\psi$ is not cohomologous to a constant, than the free energy function $\R \ni t\to \cE_{f,\phi,\psi}(t)$ is strictly convex, we can define its Legendre transform
$I_{f,\phi,\psi}$ by
\begin{equation*}
I_{f,\phi,\psi}(s)
    := \sup_{t \in \R} \; \big\{ st-\cE_{f,\phi,\psi}(t) \big\}.
\end{equation*}
(see subsection \ref{subseclarge} for details.)

We obtain the  following stability results about the rates involving large deviations.

\begin{maintheorem}\label{thm:differentiability.LDP1}
Suppose $r \geq 2$. Let $V$ be a compact manifold and $\big((f_v,\phi_{v},\psi_v)\big)_{v\in V}$ an injective and parameterized ($C^{r-1}$) family of maps in
$D^{r} \times C^{r}(M ,\R) \times C^{r}(M ,\R)$.
If the observable $\psi_{v_{\ast}}$ is not cohomologous to a constant,
for some $v_{\ast} \in V$, then there exists an
open neighborhood $U$ of $v_{\ast}$ and an open interval $J$ such that for all $v\in \overline{U}$ and $[a,b]\subset J$
$$
\limsup_{n\to\infty} \frac1n \log \mu_{f_v,\phi_v}
    \left(x\in M : \frac1n \sum_{j=0}^{n-1}\psi_{v}\circ f_{v}^{j}(x) \in [a,b] \right)
    \le-\inf_{s\in[a,b]} I_{f_v,\phi_v,\psi_v}(s)
\;\;\;\text{ and }
$$
$$
\liminf_{n\to\infty} \frac1n \log \mu_{f_v,\phi_v}
    \left(x\in M : \frac1n \sum_{j=0}^{n-1}\psi_v\circ f_{v}^{j}(x) \in (a,b) \right)
    \ge-\inf_{s\in(a,b)} I_{f_v,\phi_v,\psi_v}(s).
$$
Moreover, the Legendre transform  $J \times U \ni (s,v) \mapsto I_{f_v,\phi_v,\psi_v}(s)$ is $C^{r-1}$.

\end{maintheorem}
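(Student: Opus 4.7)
The strategy is to deduce the theorem from Theorem~\ref{thm:linres}, Theorem~\ref{thm:stalaw1}, Corollary~\ref{coho1}, and a Gärtner--Ellis argument combined with exponential tilting. Set
$$
\cE(t,v) := \cE_{f_v,\phi_v,\psi_v}(t) = P_{\topp}(f_v, \phi_v + t\psi_v) - P_{\topp}(f_v,\phi_v).
$$
By Theorem~\ref{thm:linres}, $(f,\phi)\mapsto P_{\topp}(f,\phi)$ is $C^{r-1}$; composing with the $C^{r-1}$ parameterization $v\mapsto(f_v,\phi_v,\psi_v)$ gives $\cE \in C^{r-1}(\R\times V)$. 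Classical Ruelle theory in the potential direction alone moreover ensures that $t\mapsto \cE(t,v)$ is smooth, so $\partial_t^2 \cE(t,v)$ exists and coincides with the asymptotic variance $\sigma^2_{f_v,\phi_v+t\psi_v}(\psi_v)$; by Theorem~\ref{thm:stalaw1} this variance is continuous in $(t,v)$.

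I would next localize so that $\cE(\cdot,v)$ is strictly convex uniformly in $v$. Non-cohomology of $\psi$ to a constant is a property of the pair $(f,\psi)$ that is independent of the potential, so Corollary~\ref{coho1} provides a neighborhood $U_0$ of $v_*$ on which $\psi_v$ remains non-cohomologous for $f_v$, whence $\sigma^2_{f_v,\phi_v+t\psi_v}(\psi_v)>0$ for every $t$ and every $v\in U_0$. Continuity and compactness then yield an open neighborhood $U \subset U_0$ of $v_*$ and an open interval $T \ni 0$ with $\partial_t^2 \cE(t,v) \ge c_0 > 0$ on $T\times\overline{U}$. For the regularity of the rate function I would apply the implicit function theorem to $\partial_t \cE(t,v)=s$; strict convexity produces a unique $C^{r-2}$ solution $t^*(s,v)$ on an open set $J\times U$, where $J$ is an open interval about $\partial_t\cE(0,v_*)=\int\psi_{v_*}\, d\mu_{f_{v_*},\phi_{v_*}}$. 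Writing $I_{f_v,\phi_v,\psi_v}(s) = s\,t^*(s,v) - \cE(t^*(s,v),v)$ and invoking the envelope identities $\partial_s I = t^*$ and $\partial_v I = -\partial_v \cE(t^*(s,v),v)$ shows that both partial derivatives of $I$ are of class $C^{r-2}$, hence $I \in C^{r-1}(J\times U)$.

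Finally I would obtain the large deviation bounds by a Gärtner--Ellis argument made uniform in $v$. For the upper bound, Chebyshev's inequality yields, for $t\ge 0$,
$$
\mu_{f_v,\phi_v}\Big\{\tfrac1n S_n\psi_v \ge s\Big\} \le e^{-nts} \int e^{tS_n\psi_v}\, d\mu_{f_v,\phi_v},
$$
and the uniform spectral gap established in Section~\ref{prelim} gives $\tfrac1n \log\int e^{tS_n\psi_v}\, d\mu_{f_v,\phi_v} \to \cE(t,v)$ uniformly on $\overline{U}\times\overline{T}$; optimizing over $t$ and exploiting convexity and monotonicity of $I$ in $s$ converts the resulting pointwise bounds on the half-lines $\{\cdot\ge b\}$ and $\{\cdot\le a\}$ into the desired interval bound $-\inf_{s\in[a,b]}I_{f_v,\phi_v,\psi_v}(s)$. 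The lower bound is obtained by exponential tilting: for $c\in(a,b)$, pass to the equilibrium state of the shifted potential $\phi_v + t^*(c,v)\psi_v$, under which the CLT (Theorem~\ref{thm:stalaw1}) forces $\tfrac1n S_n\psi_v$ to concentrate near $c$; the Radon--Nikodym factor back to $\mu_{f_v,\phi_v}$, computed through the transfer operator eigenvalue equation, contributes exactly $e^{-nI_{f_v,\phi_v,\psi_v}(c)+o(n)}$. The main obstacle is to upgrade these pointwise-in-$v$ bounds to bounds uniform in $v\in\overline U$; this reduces to uniform convergence of normalized transfer operator iterates for the family $\{\mathcal{L}_{f_v,\phi_v+t\psi_v}\}_{(t,v)\in\overline T\times\overline U}$, which is precisely what the cone/projective metric techniques of Section~\ref{prelim} provide.
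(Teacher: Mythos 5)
Your proposal follows essentially the same route as the paper: both reduce the exponential bounds to a G\"artner--Ellis argument (the paper simply cites the literature, e.g.\ \cite{CRL98}, and treats this part as standard once the free energy is differentiable and strictly convex), and both obtain the $C^{r-1}$ regularity of $I$ by solving $\partial_t\cE(t,v)=s$ via an implicit function argument on a neighborhood where $\partial_t^2\cE$ is bounded away from zero. Your use of Corollary~\ref{coho1} to keep $\psi_v$ non-cohomologous on a neighborhood is equivalent to the paper's observation that cohomology to a constant would force $\cE(\cdot,v)$ to be affine.

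The one place worth tightening is the regularity of $t^*$. You apply the implicit function theorem to $\partial_t\cE(t,v)=s$ and conclude $t^*\in C^{r-2}$; for $r=2$ this gives only $C^0$ data, and the $C^k$ implicit function theorem requires $k\ge 1$, so the IFT step does not literally apply. The paper sidesteps this by Proposition~\ref{prop:free.energy}~(ii), which records that $\cE'$ (not just $\cE$) is jointly $C^{r-1}$ — this comes from the identity $\cE'_{f,\phi,\psi}(t)=\int\psi\,d\mu_{f,\phi+t\psi}$ and Theorem~\ref{thm:linres}, and is one degree better than naively differentiating a $C^{r-1}$ map. With that in hand, the skew-product $F(v,t)=(v,\cE'_{f_v,\phi_v,\psi_v}(t))$ is a $C^{r-1}$ diffeomorphism onto its image, so $t^*\in C^{r-1}$ directly and $I(s,v)=s\,t^*-\cE(t^*,v)$ is $C^{r-1}$ by composition, with no need for the envelope identities. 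Your envelope computation $\partial_s I=t^*$, $\partial_v I=-\partial_v\cE(t^*,\cdot)$ is correct and is a perfectly valid alternative for gaining back a degree of regularity, but you should then either invoke the $C^{r-1}$ regularity of $\cE'$ to legitimize the IFT step for $r=2$, or replace IFT by strict monotonicity of $t\mapsto\partial_t\cE$ to get continuity of $t^*$ in the borderline case.

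One smaller remark: the large-deviation bounds in the statement are required pointwise in $v\in\overline U$, not uniformly in $v$, so the uniformity concern you raise at the end (uniform convergence of normalized transfer operator iterates) is not actually needed for the theorem as stated; what is needed from the uniform spectral gap is only that the interval $J$ and the lower bound on $\partial_t^2\cE$ can be chosen uniformly over $\overline U$, which you also establish.
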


\subsection{Multifractal analysis}\label{sec:multanal}

Given a continuous observable $\psi: M \rightarrow \R$, a \emph{Multifractal analysis} of its Birkhoff's average means
the study from the topological, dimensional or ergodic viewpoint
of level sets $\{x \in M : \limsup\frac{1}{n}\sum_{i=0}^{n-1}\psi(f^{i}(x)) \in J\}$, where $J$ is an interval. In \cite{BV15} these level sets are studied from the topological pressure viewpoint for dynamics exhibiting an exponential large deviations principle and  a unique equilibrium state which is also a weak Gibbs measure. In particular, \cite[Theorem B]{BV15} states that if $(f , \phi) \in \mathcal{D}^{r} \times C^{r}(M , \R)$ and $\phi \in C^{r}(M , \R)$ is an observable which is not cohomologous to a constant, then
 $$
P_{\overline{X}_{\mu_{f,\phi},\psi,c}}(f,\phi) = P_{\topp}(f , \phi) -\inf_{|s - \int \psi d\mu_{f,\phi}| \geq c} I_{f,\phi,\psi}(s),
$$
where $P_{\overline{X}_{\mu_{f,\phi},\psi,c}}(f,\phi)$ is the  topological pressure of the set
$$
\overline{X}_{\mu_{f,\phi},\psi,c} := \{x \in M : |\limsup\frac{1}{n}\sum_{i=0}^{n-1}\psi(f^{i}(x)) - \int \psi d\mu_{f,\phi}| \geq c \}
$$
 for $f$ with respect the $\phi$.

Recall that a system $f$ satisfies the \emph{specification property} if
for any $\vep>0$ there exists an integer $N=N(\vep)\geq 1$ such that
the following holds: for every $k\geq 1$, any points $x_1,\dots,
x_k$, and any sequence of positive integers $n_1, \dots, n_k$ and
$p_1, \dots, p_k$ with $p_i \geq N(\vep)$
there exists a point $x$ in $M$ such that
$$
\begin{array}{cc}
d\Big(f^j(x),f^j(x_1)\Big) \leq \vep, &\forall \,0\leq j \leq n_1
\end{array}
$$
and
$$
\begin{array}{cc}
d\Big(f^{j+n_1+p_1+\dots +n_{i-1}+p_{i-1}}(x) \;,\; f^j(x_i)\Big)
        \leq \vep &
\end{array}
$$
for every $2\leq i\leq k$ and $0\leq j\leq n_i$. It is a classical result that
expanding maps satisfy the specification property.

Given an expanding map $f$, let us write $\mathcal{M}_{1}(f)$  for the space of $f-$invariant Borel probabilities.
As a consequence of Theorem~\ref{thm:differentiability.LDP1}
we obtain:

\begin{maincorollary}\label{corexp}

Suppose $r \geq 2$. Let $V$ be a compact manifold and $\big((f_v,\phi_{v},\psi_v)\big)_{v\in V}$ an injective and parameterized ($C^{r-1}$) family of maps in
$D^{r} \times C^{r}(M ,\R) \times C^{r}(M ,\R)$. Then, the set
  $Y:= \{(v,c) \in V \times \R^+_0 : c < \sup_{\eta \in \mathcal{M}_{1}(f_{v})}|\int \psi_{v} d\mu_{f_{v},\phi_{v}} - \int \psi_{v} d\eta|\}$ is an open set and
  $$
  Y \ni (v, c) \mapsto P_{\overline{X}_{\mu_{f_{v},\phi_{v}},\psi_{v},c}}(f_{v},\phi_{v});
  $$
is $C^{r-1}$.

\end{maincorollary}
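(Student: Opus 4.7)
My plan is to reduce the claim to the previously stated results by applying the pressure formula from \cite[Theorem B]{BV15}. Write $\mu_v := \mu_{f_v,\phi_v}$ and $s_0(v) := \int \psi_v \, d\mu_v$. Whenever $\psi_v$ is not cohomologous to a constant for $f_v$, the cited result reads
$$
P_{\overline{X}_{\mu_v,\psi_v,c}}(f_v,\phi_v) \;=\; P_{\topp}(f_v,\phi_v) \;-\; \inf_{|s - s_0(v)| \geq c} I_{f_v,\phi_v,\psi_v}(s).
$$
So it suffices to verify that $Y$ is open and that each term on the right depends $C^{r-1}$ on $(v,c)$ throughout $Y$.

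For openness, set $\Sigma(v) := \sup_{\eta \in \mathcal{M}_{1}(f_v)} |s_0(v) - \int \psi_v \, d\eta|$, so that $Y = \{(v,c) : c < \Sigma(v)\}$. Since $v \mapsto \psi_v \in C^0(M,\R)$ is continuous (being $C^{r-1}$), since Theorem~A gives weak-$\ast$ continuity of $v \mapsto \mu_v$, and since the set of $f_v$-invariant probabilities depends upper semi-continuously on $v$ in the weak-$\ast$ topology, the function $\Sigma$ is continuous, and openness of $Y$ follows. Moreover, $\Sigma(v) > 0$ precisely when $\psi_v$ is not cohomologous to a constant for $f_v$ (by Proposition~\ref{propcohomo} together with the ergodic decomposition), so the formula from \cite{BV15} is valid throughout $Y$. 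For the regularity of the ingredients: Theorem~A and the $C^{r-1}$ parametrization give $v \mapsto P_{\topp}(f_v,\phi_v) \in C^{r-1}$; Corollary~\ref{exp} (applied with $g_{f_v} := \psi_v$) gives $v \mapsto s_0(v) \in C^{r-1}$; and Theorem~C supplies the joint $C^{r-1}$ regularity of $(s,v) \mapsto I_{f_v,\phi_v,\psi_v}(s)$.

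It remains to analyze the infimum. Strict convexity of $I_{f_v,\phi_v,\psi_v}$ with unique zero at $s_0(v)$ yields strict monotonicity on each side of $s_0(v)$, so
$$
\inf_{|s - s_0(v)| \geq c} I_{f_v,\phi_v,\psi_v}(s) \;=\; \min\bigl( I_{f_v,\phi_v,\psi_v}(s_0(v)+c),\; I_{f_v,\phi_v,\psi_v}(s_0(v)-c) \bigr).
$$
Each of the two quantities inside the minimum is a composition of $C^{r-1}$ maps and is therefore $C^{r-1}$ in $(v,c)$ on $Y$ by the chain rule. The main obstacle I foresee is the potential non-smoothness of the pointwise minimum of two smooth functions along their coincidence locus; on the open subset of $Y$ where one branch strictly dominates the other, joint $C^{r-1}$ regularity of the whole expression is immediate, and the final step of the argument would consist in either showing the coincidence locus is a thin set on which the claim is understood componentwise, or exploiting a monotonicity of the parametrization that consistently selects one of the two branches. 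Apart from handling this transition set, all regularity is inherited from Theorems~A and C together with \cite[Theorem B]{BV15}.
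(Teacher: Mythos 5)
Your reduction via the pressure formula from \cite[Theorem B]{BV15} matches the paper's approach at the structural level, but your argument for openness of $Y$ has a directional error that undermines the core of the proof. You write that upper semi-continuity of the set-valued map $v \mapsto \mathcal{M}_1(f_v)$ (in the weak-$\ast$ topology) implies that $\Sigma(v) = \sup_{\eta \in \mathcal{M}_1(f_v)}|\int\psi_v\,d\mu_v - \int\psi_v\,d\eta|$ is continuous, hence that $Y = \{(v,c): c < \Sigma(v)\}$ is open. This does not follow: upper hemicontinuity of the set of invariant measures (every weak-$\ast$ limit of $f_v$-invariant measures as $v \to v_*$ is $f_{v_*}$-invariant) only gives \emph{upper} semi-continuity of a supremum over that set, i.e.\ $\limsup_{v\to v_*}\Sigma(v) \leq \Sigma(v_*)$. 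What you actually need for openness of the sublevel set $\{c < \Sigma(v)\}$ is \emph{lower} semi-continuity of $\Sigma$, which requires the opposite property: that every $\eta \in \mathcal{M}_1(f_{v_*})$ can be weak-$\ast$ approximated by measures $\eta_v \in \mathcal{M}_1(f_v)$ as $v\to v_*$. Lower hemicontinuity of $v \mapsto \mathcal{M}_1(f_v)$ is \emph{not} automatic — invariant measures can disappear under perturbation in general — and establishing it is precisely the point of the paper's argument. The paper's proof invokes the specification property of expanding maps (with $N(\varepsilon)$ uniform over a neighbourhood of $v_*$) together with Birkhoff's ergodic theorem to construct, from a long orbit segment approximating $\int\psi_{v_*}\,d\eta$, a shadowing point for $f_v$ whose empirical measures accumulate on an $f_v$-invariant probability $\eta_v$ with $\int\psi_{v_*}\,d\eta_v$ close to $\int\psi_{v_*}\,d\eta$. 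That shadowing construction is the missing ingredient in your proposal.

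Separately, your worry about the pointwise minimum $\min\bigl(I(s_0(v)+c),\,I(s_0(v)-c)\bigr)$ along its coincidence locus is a legitimate subtlety that the paper also glosses over (it simply declares regularity a ``direct consequence'' of Theorem D once openness is proved). You are right to flag it, but a gesture toward ``a thin coincidence set'' or ``monotonicity of the parametrization'' does not close the gap; one would need to show either that the two branches have matching derivatives up to order $r-1$ on the coincidence set, or restrict attention to the open locus where one branch strictly dominates. As it stands this part of your argument is also incomplete, but the more serious defect is the semi-continuity direction in the openness step.
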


Define the spectrum of a continuous function
$\psi : M \rightarrow \R$ by:
$$
L(\psi, f) := \{ \alpha \in \R : \,\text{there exists} \,\, x \in M \,\, \text{such that} \,\,
lim_{n \to +\infty}\frac{1}{n}\sum_{i = 0}^{n-1}\psi (f(x)) = \alpha\}.
$$

  The set $Y$ defined above  has a close relationship with the Birkhoff's spectrum of the observable $\psi$. As $f \in \mathcal{D}^{r}$ has specification property we know that
$$
L(\psi,f) = [\inf_{\eta \in \mathcal{M}_{1}(f)}\int \psi d\eta \;,\; \sup_{\eta \in \mathcal{M}_{1}(f)} \int \psi d\eta]
$$
(see \cite[Lemma 1.1]{Tho9}).
Thereby, $Y$ is the set of
$(v,c) \in V \times \R^+_0$ such that $\int \psi_{v}d\mu_{f_{v},\phi_{v}} + c$ or $\int \psi_{v}d\mu_{f_{v},\phi_{v}} - c$ belongs to the interior of $L(\psi_{v},f_{v})$.


\begin{remark}
As we shall see, the results of high regularity are consequences of the uniform spectral gap of the transfer operator on smooth functions space. Thus, similar results can be obtained for a robust non-uniformly expanding setting and potential close to a constant function studied
in \cite{BCV16}.
\end{remark}

\section{Preliminary}\label{prelim}

\subsection{Cones and projective metrics}

In this section we recall basic elements of the cones and projective metrics theory, for more details see e.g. \cite{Vi97,AM06}.

Let $E$ be a vector space, $\emptyset \neq C \subset E\setminus\{0\}$ is a (convex) cone  if $\forall v_{1}, v_{2} \in C$ and $t > 0$ we have $tv_{1} + v_{2} \in C$. Requiring that $C \cap (-C) = \emptyset$ we can induce a partial order on $E$ that preserves its structure of vector space; in fact:
$$u \preceq v \Leftrightarrow v - u \in C\cup\{0\}$$

The closure $\overline{C}$ of $C$ is defined by:
$$w \in \overline{C} \Leftrightarrow \text{ there exists }  v \in C \text{ and } t_{n}\searrow 0 \text{ such that } (w + t_{n}v) \in C,\, \forall n \in \N.$$

We will work from here with cones such that $\overline{C} \cap (-\overline{C}) = \{0\}$,
this will allow us to define a pseudo-metric on the cone. Given $v_{1}$ and $v_{2} \in C$, define:

\begin{itemize}
  \item $\alpha(v_{1} , v_{2}) := \sup\{t > 0; v_{2} - tv_{1} \in C \}$
  \item $\beta(v_{1} , v_{2}) := \inf\{t > 0; tv_{1} - v_{2} \in C \}$.\\
\end{itemize}

Let $\Theta : C \times C \rightarrow [0 , +\infty]$ be defined by:
$$\Theta(v_{1} , v_{2}) := \log\frac{\beta(v_{1} , v_{2})}{\alpha(v_{1} , v_{2})}.$$
 $\Theta$ is known as Hilbert's metric, and it is a pseudo-metric.

It is natural to wonder about the relationship between the projective metric and the pre-existing metrics on a normed space, in general depends of the cone that we have defined. The next proposition (Lemma 8.2 in \cite{AM06}) gives a result in this direction.

\begin{proposition}\label{cmpn2}
Let $E$ be a normed space, $|| \cdot ||_{i}$ be semi-norms on $E$, for $i = 1 , 2$, and $\preceq$ be a partial order that preserve its structure of vector space. Suppose that for all $v, u \in C$ holds:
$$-v \preceq u \preceq v \Rightarrow ||u||_{i} \leq ||v||_{i}, i = 1, 2.$$
Then given $f , g \in C$, with $||f||_{1} = ||g||_{1}$, we have:
$$||f - g||_{2} \leq (e^{\Theta(f , g)} - 1)||f||_{2}.$$
\end{proposition}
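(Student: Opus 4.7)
The plan is to sandwich $g$ between positive multiples of $f$ and then to invoke the order‐monotonicity hypothesis twice: once with $\|\cdot\|_{1}$, in order to locate the value $1$ inside $[\alpha(f,g),\beta(f,g)]$, and once with $\|\cdot\|_{2}$, in order to bound $\|f-g\|_{2}$. Because the definitions of $\alpha$ and $\beta$ only guarantee $g-\alpha f$ and $\beta f-g$ in $\overline{C}$, I first work with arbitrary $\alpha'<\alpha(f,g)$ and $\beta'>\beta(f,g)$, for which $g-\alpha'f$ and $\beta'f-g$ lie in $C$ itself, and pass to the limit at the end.

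The first step is to show that the normalization $\|f\|_{1}=\|g\|_{1}$ forces $\alpha(f,g)\le 1\le\beta(f,g)$. From $\alpha'f\preceq g\preceq\beta'f$, together with $f,g\in C$, one obtains $-\beta'f\preceq g\preceq\beta'f$ and $-(1/\alpha')g\preceq f\preceq(1/\alpha')g$ (the auxiliary inequalities follow since sums of elements of $C$ stay in $C$). Applying the hypothesis with $i=1$ gives $\alpha'\|f\|_{1}\le\|g\|_{1}\le\beta'\|f\|_{1}$, and letting $\alpha'\uparrow\alpha$, $\beta'\downarrow\beta$ while using $\|f\|_{1}=\|g\|_{1}$ (the degenerate case $\|f\|_{1}=0$ being trivial) yields $\alpha\le 1\le\beta$.

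The second step is the estimation of $\|f-g\|_{2}$. Subtracting $f$ from the sandwich gives $-(1-\alpha')f\preceq g-f\preceq(\beta'-1)f$, and I compare both sides against the common element $v:=(\beta'/\alpha'-1)\,f$. A short algebraic check, relying on $\alpha'\le 1\le\beta'$ and on the elementary inequality $\beta'/\alpha'+\alpha'\ge 1/\alpha'+\alpha'\ge 2$, shows that $v-(\beta'-1)f=\beta'(1/\alpha'-1)f$ and $v-(1-\alpha')f=(\beta'/\alpha'+\alpha'-2)f$ are both nonnegative multiples of $f$; hence, by transitivity of $\preceq$, we obtain $-v\preceq g-f\preceq v$. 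The hypothesis with $i=2$ then yields $\|g-f\|_{2}\le\|v\|_{2}=(\beta'/\alpha'-1)\|f\|_{2}$, and letting $\alpha'\uparrow\alpha$, $\beta'\downarrow\beta$ gives the desired inequality $\|f-g\|_{2}\le(e^{\Theta(f,g)}-1)\|f\|_{2}$.

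The main obstacle is precisely the choice of the comparison element $v$: one needs both $(1-\alpha')f\preceq v$ and $(\beta'-1)f\preceq v$ to hold simultaneously in order to apply the monotonicity hypothesis with $i=2$, and this is exactly what the normalization $\|f\|_{1}=\|g\|_{1}$ enables through the first step. Without that normalization, one would only obtain the weaker additive estimate $\|f-g\|_{2}\le\max\{1-\alpha,\beta-1\}\|f\|_{2}$, losing the multiplicative Hilbert–metric form $\beta/\alpha-1$ that is the key feature making the projective-metric contraction arguments work downstream.
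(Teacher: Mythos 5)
The paper does not actually prove Proposition \ref{cmpn2}; it is quoted from Lemma~8.2 of \cite{AM06} and used as a black box. So there is no in-paper argument to compare against, and your proof stands as an independent one.

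Your argument is correct and is, as far as I can tell, the standard one: sandwich $g$ between $\alpha' f$ and $\beta' f$ with $\alpha'<\alpha(f,g)$ and $\beta'>\beta(f,g)$, use the normalization $\|f\|_1=\|g\|_1$ together with the order-monotonicity of $\|\cdot\|_1$ to pin down $\alpha\le 1\le\beta$, and then compare $g-f$ against $v=(\beta'/\alpha'-1)f$, which works precisely because $\alpha'\le 1\le\beta'$ makes both $v-(\beta'-1)f$ and $v-(1-\alpha')f$ nonnegative multiples of $f$. The passage to the limit $\alpha'\uparrow\alpha$, $\beta'\downarrow\beta$ and the identity $e^{\Theta}-1=\beta/\alpha-1$ finish it. Two small caveats. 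First, the parenthetical claim that the case $\|f\|_1=0$ is ``trivial'' is not right: with $f\in C$, $\|f\|_1=0$, $\|f\|_2>0$ and $g=2f$ one has $\|f\|_1=\|g\|_1=0$, $\alpha=\beta=2$, hence $e^{\Theta}-1=0$, while $\|f-g\|_2=\|f\|_2>0$, so the stated inequality fails. The proposition implicitly requires $\|\cdot\|_1$ to be nondegenerate on $C$ (which is the case in the paper's applications, where $\|\cdot\|_1=|\nu_{f,\phi}(\cdot)|$ and cone elements are strictly positive), so you should state this assumption rather than dismiss it. Second, the hypothesis as written quantifies over $u,v\in C$, yet you (necessarily) apply it to $u=g-f\notin C$; this is a defect in the statement rather than in your proof, but it is worth flagging that the intended hypothesis must be ``for all $u\in E$ and $v\in C$''.
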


A fundamental result in cones theory assures that if the image of a cone $C_1$ by a linear map $L$ is bounded with respect to the projective metric of some cone $C_2$ that contains $L(C_1)$, then the map $L$ is a  contraction with respect to such projective metrics:

\begin{theorem}\label{contracao}
Let $E_{1} , E_{2}$ vector spaces, $C_{i} \subset E_{i}, i = 1 , 2,$ cones, $L : E_{1} \rightarrow E_{2}$ a linear operator such that $L(C_{1}) \subset C_{2}$ and $D := \sup\{\Theta_{2}(L(u) , L(v)) : u ,v \in C_{1} \}$. If $D < +\infty$ then:
$$\Theta_{2}(L(u) ,L(v)) \leq (1 - e^{-D})\Theta_{1}(u , v), \forall\,u ,v \in C_{1}.$$
\end{theorem}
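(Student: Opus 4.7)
The plan is to follow the classical Birkhoff strategy in three conceptual steps: (a) translate cone-membership conditions for an auxiliary pair $(Lv_{1}, Lv_{2}) \in L(C_{1}) \times L(C_{1})$ into bounds on $\alpha_{2}(Lu, Lv)$ and $\beta_{2}(Lu, Lv)$; (b) extract the worst-case relation using the hypothesis $\Theta_{2}(Lv_{1}, Lv_{2}) \leq D$; (c) reduce the claim to an elementary one-variable inequality.

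First I would fix $u, v \in C_{1}$ (one may assume $\Theta_{1}(u, v) < +\infty$, else the statement is trivial), and pick $t < \alpha_{1}(u, v)$ and $s > \beta_{1}(u, v)$, so that $v_{1} := v - tu$ and $v_{2} := su - v$ both lie in $C_{1}$ by convexity. By hypothesis $\Theta_{2}(Lv_{1}, Lv_{2}) \leq D$, so I can choose $\lambda, \mu > 0$ with $\lambda < \alpha_{2}(Lv_{1}, Lv_{2})$, $\mu > \beta_{2}(Lv_{1}, Lv_{2})$, and $\mu/\lambda$ as close as desired to $e^{\Theta_{2}(Lv_{1},Lv_{2})} \leq e^{D}$. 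Then $Lv_{2} - \lambda Lv_{1} \in C_{2}$ and $\mu Lv_{1} - Lv_{2} \in C_{2}$; substituting $Lv_{1} = Lv - tLu$ and $Lv_{2} = sLu - Lv$ rearranges these to
$$(s + \lambda t)Lu - (1 + \lambda)Lv \in C_{2}
\quad\text{and}\quad
(\mu + 1)Lv - (\mu t + s)Lu \in C_{2},$$
which give $\beta_{2}(Lu, Lv) \leq (s + \lambda t)/(1 + \lambda)$ and $\alpha_{2}(Lu, Lv) \geq (\mu t + s)/(\mu + 1)$, hence an explicit upper bound on the ratio $\beta_{2}(Lu, Lv)/\alpha_{2}(Lu, Lv)$.

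The main obstacle is the resulting algebraic inequality. After the rescaling $\lambda \mu = 1$ (replace $Lv_{1}$ by a positive multiple, which does not affect any of the cone memberships above) and with $a := e^{D/2}$, $p := s/t$, the ratio is dominated by the M\"obius-type expression $(pa + 1)/(p + a)$, and I would need to verify
$$\frac{pa + 1}{p + a} \;\leq\; p^{\,1 - e^{-D}} \quad \text{for every } p \geq 1.$$
Both sides equal $1$ at $p = 1$, so it suffices to compare derivatives in $p$: the derivative of the left side is $(a^{2} - 1)/((pa+1)(p+a))$, which is at most $(a^{2} - 1)/(a^{2} p)$, the derivative of the right side, because of the trivial inequality $(pa + 1)(p + a) \geq a^{2} p$. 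This is the only calculation requiring actual work.

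To conclude, letting $t \nearrow \alpha_{1}(u, v)$ and $s \searrow \beta_{1}(u, v)$ sends $p = s/t \to e^{\Theta_{1}(u, v)}$; taking logarithms in the bound just established then yields $\Theta_{2}(Lu, Lv) \leq (1 - e^{-D})\,\Theta_{1}(u, v)$, which is the contraction asserted by the theorem.
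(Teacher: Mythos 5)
Your cone manipulations are correct: with $v_1 = v - tu$ and $v_2 = su - v$, the memberships $Lv_2 - \lambda Lv_1 \in C_2$ and $\mu Lv_1 - Lv_2 \in C_2$ do rearrange to the stated bounds, giving (after normalizing $t=1$, $p=s/t$)
$$
\frac{\beta_{2}(Lu, Lv)}{\alpha_{2}(Lu, Lv)} \;\leq\; \frac{(p + \lambda)(\mu + 1)}{(\mu + p)(1 + \lambda)}.
$$
But the normalization $\lambda\mu = 1$ is not available. Rescaling $Lv_1$ by $c>0$ replaces the pair $(\lambda,\mu)$ by $(\lambda/c,\mu/c)$ \emph{and} multiplies $Lv_1 = Lv - tLu$ by $c$, so the products $\lambda c\, t$ and $\mu c\, t$ that enter the ratio above are unchanged; there is no remaining gauge freedom. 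The numbers $\lambda=\alpha_2(Lv_1,Lv_2)$ and $\mu=\beta_2(Lv_1,Lv_2)$ are determined by the data, and the only thing the diameter hypothesis tells you is the ratio constraint $\mu/\lambda\le e^{D}=a^{2}$. You must therefore bound the right-hand side over \emph{all} admissible $(\lambda,\mu)$. Setting $\mu=a^{2}\lambda$ and differentiating in $\lambda$ (or using the symmetry $\lambda\mapsto p/(a^{2}\lambda)$) shows the supremum is attained at $\lambda=\sqrt p/a$, $\mu=a\sqrt p$, with value $\bigl(\tfrac{a\sqrt p+1}{a+\sqrt p}\bigr)^{2}$. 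This is strictly larger than your claimed $\tfrac{pa+1}{p+a}$ whenever $D>0$ and $p>1$ (for $a=2$, $p=4$: $25/16$ versus $3/2$), so the step ``the ratio is dominated by $(pa+1)/(p+a)$'' is wrong as written.

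The damage is minor, because the one-variable inequality you prove is exactly the right lemma after a change of variable. (Note your ``derivative of the left side'' is really its logarithmic derivative; that is what you are comparing, and the comparison $(pa+1)(p+a)\ge a^{2}p$ is correct.) Applying $\tfrac{qa+1}{q+a}\le q^{1-1/a^{2}}$ with $q=\sqrt p$ and squaring gives precisely $\bigl(\tfrac{a\sqrt p+1}{a+\sqrt p}\bigr)^{2}\le p^{1-e^{-D}}$. With the genuine worst-case bound in place, letting $t\nearrow\alpha_{1}(u,v)$ and $s\searrow\beta_{1}(u,v)$ sends $p\to e^{\Theta_{1}(u,v)}$ and yields $\Theta_{2}(Lu,Lv)\le(1-e^{-D})\Theta_{1}(u,v)$, so the plan succeeds once the unjustified normalization and the incorrect intermediate bound are replaced by the correct worst-case analysis. (The paper itself quotes this theorem from the cone literature without proof, so there is no in-paper argument to compare against.)
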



\subsection{Uniform spectral gap}

In this section we are interested in proving that given $(f, \phi) \in \mathcal{D}^{r}\times C^{r}(M , \R)$ the transfer operator $\mathcal{L}_{f,\phi}$ has spectral gap property as a operator acting on  $C^{r}(M , \C)$. Furthermore, such spectral gap can be taken  uniform under small perturbations in dynamics and potentials.

Usually, spectral gap for the transfer operator means that the spectrum of the operator can be decomposed into two compact disjoint components.
The elements of one of this components are eingenvalues whose eigenspaces are finite dimensional.  The other component has its norm
strictly smaller than the norm of the eingenvalues in the first component. This kind of decomposition often arises from Lasota-Yorke arguments.  

In this paper, what we mean by the {\em spectral gap property}, is
stronger than the usual definition. We require that the first component 
has exactly one dominant eigenvalue whose eigenspace is one-dimensional:

\begin{definition}{({\em Spectral gap property.})}\label{desgp}
 Given a Banach $E$ and a bounded linear operator $A : E \rightarrow E$, we will say that $A$ has the \emph{spectral gap property} if its spectrum $spec(A)$ can be decomposed into two (compact) spectral components $\Sigma$ and $\{\lambda\}$ such that $\lambda$ is the spectral radius of $A$, is a simple positive eigenvalue
 and $\Sigma \cap B(0, \lambda) = \Sigma$.
\end{definition}
Note that  the {\em spectral gap property} for an operator $A$  is equivalent to the existence  of a splitting $E = E_{0} \oplus E_{1}$ such that:
\begin{itemize}
 \item $E_{0}$ and $E_{1}$ are closed $A-$invariant subspaces
;
\item
$E_{1}$ is a one-dimensional eigenspace
associated to the spectral radius $\lambda$, and
\item There exists $\tau \in (0 , 1)$ and $k \geq 0$ such that for all $\varphi \in E_{0}$ we have
$||\frac{A^{n}\varphi}{\lambda^{n}}|| \leq ||\varphi|| k\tau^{n},$ for every $n \geq 0$.
\end{itemize}

To show that $\mathcal{L}_{f,\phi|C^{r}}$ has the spectral gap property in the case of smooth expanding dynamics we will use the technique of cones. In other words, we find a
convex invariant cone by $\mathcal{L}_{f,\phi}$ within the space of strictly positive functions whose image by $\mathcal{L}_{f,\phi}$ has
finite diameter in the projective metric associated to cone and so we get convergence in the  $C^{r}-$norm  through the convergence in the projective metrics. For examples of use of the cones technique see  \cite{Li95,Vi97,LSV98,Ba00, Cas02, Cas04, AM06, BS09,CV13, BCV16, CN17}.

For $r \geq 1$ and $\kappa > 0$ we introduce the cone of functions
$$
\Lambda_{\kappa}^{r} := \{ \varphi \in C^{r}(M , \R) : \varphi > 0 \; \mbox{and} \; \sup_{x \in M}\Big|\Big|\frac{D^{s}\varphi(x)}{\varphi(x)}\Big|\Big| \leq \kappa c_{r ,s}, \forall 1 \leq s \leq r \},
$$
where $c_{r , r} = 1$. For $r \neq s$,  $c_{r , s}$ are constants  chosen below in order to provide the cone invariance:
 Set 
\[
\begin{cases}
c_{r, r}=1;  \\
c_{r, r-1}= \frac{(1-\sigma^{-1})}{2^{r+2} (r+ 1)!  e^{\|\phi\|_r} \max_{1\le k\le r-1}\max_{1\leq j\leq deg(f)}\{1, \sup_{z \in M} \|D^{r}f_j(z)\|^k\}} ; \\
c_{r, r-t} = c_{r, r-1} \cdot c_{r-1, r-t}, \text{ for } t=2\dots r-1
\end{cases}
\]
Roughly, the choice of $c_{r, s}$ is made in order to guarantee that the at most $2^{r+1} (r+1)!$ terms arising in the
computation of  higher order derivatives of the observable $\cL_\phi\varphi$ are dominated by the term involving
$D^r\varphi$,  while the recursive choice of the constants $c_{r, s}$ with $s<r$ guarantees that the cones 
corresponding to lower order differentiability are contracted.  Hence, our main result in this section is as follows.

For simplicity, we omit the dependence of $c_{r, s} $ in the notation of the cones.

When $r = 1$ we have
$$
\Lambda_{\kappa}^{1} = \Big\{ \varphi \in \mathcal{C}^{1}(M , \R) : \varphi > 0 \;\;\;\mbox{ and }\;\;\; \sup_{x \in M}\Big|\Big|\frac{D\varphi(x)}{\varphi(x)}\Big|\Big| \leq \kappa \Big\}.
$$

We have that $\Lambda_{\kappa}^{r}$ is a convex cone; furthermore
$\overline{\Lambda_{\kappa}^{r}} \cap (-\overline{\Lambda_{\kappa}^{r}}) = \{0\}$.

Now we are able to prove the cone invariance property.

\begin{lemma} Suppose that for some  $0< \rho< 1$ there exists $\kappa_{0}$ such that  $\mathcal{L}_{f, \phi} \Lambda_{\kappa}^{r-1} \subset \Lambda_{ \rho\kappa}^{r-1} $ for all $\kappa \geq \kappa_0$. Suppose also that 
for some $\kappa_r$ 
$$
\sup_{x \in M}\Big\|\frac{D^r\mathcal{L}_{f , \phi}\varphi(x)}{\mathcal{L}_{f, \phi}\varphi(x)}\Big\| \leq \rho \kappa, \forall
\kappa\geq \kappa_r.
$$
Then, there exists some $\kappa_1$ such that 
$\mathcal{L}_{f, \phi} (\Lambda_{\kappa}^{r} )\subset \Lambda_{ \rho\kappa}^{r} $ for all $\kappa \geq \kappa_1$.
\end{lemma}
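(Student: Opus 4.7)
The plan is to verify directly that $\mathcal{L}_{f,\phi}\varphi$ belongs to $\Lambda_{\rho\kappa}^r$ whenever $\varphi\in \Lambda_\kappa^r$ and $\kappa$ is large enough. Positivity of $\mathcal{L}_{f,\phi}\varphi$ and its $C^r$ regularity are automatic from $\varphi>0$, $f\in \mathcal{D}^r$ and $\phi\in C^r(M,\R)$, so the task reduces to bounding the ratios $\|D^s \mathcal{L}_{f,\phi}\varphi(x)/\mathcal{L}_{f,\phi}\varphi(x)\|$ for $1\leq s\leq r$ by $\rho\kappa\, c_{r,s}$. The case $s=r$ is precisely the second hypothesis, provided $\kappa\geq \kappa_r$. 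Therefore the only real content is to handle the lower orders $s\leq r-1$, and this is where the recursive definition of the constants $c_{r,s}$ is tailor-made.

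The key observation I would record is a clean cone inclusion. Using $c_{r-1,r-1}=1$ together with the recursion $c_{r,r-t}=c_{r,r-1}\cdot c_{r-1,r-t}$ for $t=2,\dots,r-1$, one checks that for every $1\leq s\leq r-1$
\begin{equation*}
c_{r,s} \;=\; c_{r,r-1}\cdot c_{r-1,s}.
\end{equation*}
Consequently, if $\varphi\in \Lambda_\kappa^r$, then for $1\leq s\leq r-1$
\begin{equation*}
\Big\|\frac{D^s\varphi(x)}{\varphi(x)}\Big\|
\;\leq\; \kappa\, c_{r,s}
\;=\; (\kappa\, c_{r,r-1})\cdot c_{r-1,s},
\end{equation*}
which, together with positivity and $C^{r-1}$ smoothness, shows $\varphi \in \Lambda_{\kappa\, c_{r,r-1}}^{r-1}$.

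Now the argument closes quickly. Choose $\kappa$ so that $\kappa\, c_{r,r-1}\geq \kappa_0$; then the first hypothesis applies and yields $\mathcal{L}_{f,\phi}\varphi \in \Lambda_{\rho\kappa\, c_{r,r-1}}^{r-1}$, i.e.
\begin{equation*}
\Big\|\frac{D^s \mathcal{L}_{f,\phi}\varphi(x)}{\mathcal{L}_{f,\phi}\varphi(x)}\Big\|
\;\leq\; \rho\kappa\, c_{r,r-1}\cdot c_{r-1,s}
\;=\; \rho\kappa\, c_{r,s},
\quad 1\leq s\leq r-1.
\end{equation*}
Combining this with the $s=r$ bound furnished by the second hypothesis (which requires $\kappa\geq \kappa_r$) gives $\mathcal{L}_{f,\phi}\varphi\in \Lambda_{\rho\kappa}^r$. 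Hence the lemma holds with
\begin{equation*}
\kappa_1 \;:=\; \max\bigl\{\, \kappa_r,\; \kappa_0/c_{r,r-1}\,\bigr\}.
\end{equation*}

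There is no genuine analytic obstacle inside this lemma itself; it is purely a bookkeeping step that matches the recursive design of the constants $c_{r,s}$ with the nested structure of the cones. The real difficulty in the whole cone-invariance scheme lies upstream, in establishing the two hypotheses: proving that the (roughly $2^{r+1}(r+1)!$) terms arising from Leibniz expansion of $D^r \mathcal{L}_{f,\phi}\varphi$ can be absorbed by the leading $D^r\varphi$ contribution up to the contraction factor $\rho$, exploiting expansion of $f$ via the factor $(1-\sigma^{-1})$ that appears in the very definition of $c_{r,r-1}$. That base estimate is exactly what dictates the size of $c_{r,r-1}$, and in turn why the inclusion $\Lambda_\kappa^r\subset \Lambda_{\kappa c_{r,r-1}}^{r-1}$ suffices to carry the induction through.
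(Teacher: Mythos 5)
Your proof is correct and follows essentially the same route as the paper: rescale the cone inclusion from the first hypothesis via $c_{r,s}=c_{r,r-1}c_{r-1,s}$, observe that $\Lambda_\kappa^r\subset\Lambda_{\kappa c_{r,r-1}}^{r-1}$, and combine with the $s=r$ bound to land in $\Lambda_{\rho\kappa}^r$, arriving at the same $\kappa_1=\max\{\kappa_r,\kappa_0/c_{r,r-1}\}$. You merely make explicit the cone inclusion that the paper's terse argument leaves implicit.
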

\begin{proof}

Note that $\mathcal{L}_{f, \phi}(\Lambda_{\kappa}^{r-1}) \subset \Lambda_{ \rho\kappa}^{r-1}, \, \forall  \kappa \geq \kappa_0$ implies  in particular that 
$$
\mathcal{L}_{f, \phi}(\Lambda_{c_{r, r-1} \kappa}^{r-1}) \subset \Lambda_{\rho c_{r, r-1}\kappa}^{r-1} \forall  \kappa \geq c_{r, r-1}^{-1}\kappa_0.
$$
Take $\kappa_1= \max\{ c_{r, r-1}^{-1}\kappa_0, \kappa_r\}$
$$
\Big\|{\frac{D^j\mathcal{L}_{f , \phi}\varphi(x)}{\mathcal{L}_{f, \phi}\varphi(x)}}\Big\| \leq \rho \kappa, \forall \kappa \geq \kappa_1.
$$

Note that $\Lambda_{\rho\kappa}^{r}= \Lambda_{c_{r, r-1}\rho\kappa}^{r-1} \cap \{\vr \in C^r; \Big\|{\frac{D^r\mathcal{L}_{f , \phi}\varphi(x)}{\mathcal{L}_{f, \phi}\varphi(x)}}\Big\| \leq \rho \kappa \}$ and this implies the lemma.

\end{proof}

\begin{proposition}\label{inv}
Given $f \in D^{r}$ and $\phi \in C^{r}$ there exists $\kappa_{0} > 0, c_{r , s} > 0$ and $\rho \in (0 , 1)$ such that $\mathcal{L}_{f , \phi} \Lambda_{\kappa}^{r} \subset \Lambda_{\rho\kappa}^{r},$ for all $\kappa \geq \kappa_{0}$.
\end{proposition}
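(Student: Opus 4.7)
The plan is to proceed by induction on $r$, leveraging the preceding lemma to reduce at each step to a single estimate: given cone invariance for $\Lambda^{r-1}$ (under its own constants $c_{r-1,s}$ and rate $\rho$), it suffices to prove that for every $\varphi\in\Lambda^r_\kappa$ the top-order ratio $\|D^r\mathcal{L}_{f,\phi}\varphi(x)\|/\mathcal{L}_{f,\phi}\varphi(x)$ is bounded by $\rho\kappa$ for all $\kappa$ sufficiently large. The lemma then concludes cone invariance for $\Lambda^r$.

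For the base case $r=1$, I would differentiate once the identity $\mathcal{L}_{f,\phi}\varphi(x)=\sum_{j=1}^{\deg(f)} e^{\phi(f_j(x))}\varphi(f_j(x))$ and extract the common factor $Df_j(x)$. Using $\|Df_j\|\leq\sigma^{-1}$ together with $\|D\varphi/\varphi\|\leq\kappa$, dividing by the positive weighted average $\mathcal{L}_{f,\phi}\varphi(x)$ gives the bound $\sigma^{-1}(\|D\phi\|_\infty+\kappa)$. Any $\rho\in(\sigma^{-1},1)$ and $\kappa_0\geq\|D\phi\|_\infty/(\rho-\sigma^{-1})$ then yield the statement for $r=1$ (no intermediate constants $c_{1,s}$ being needed).

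For the inductive step I would expand $D^r[e^{\phi\circ f_j}\varphi\circ f_j]$ by iterated Leibniz/Fa\`a di Bruno. This produces at most $2^{r+1}(r+1)!$ terms; each is a product of a factor $e^{\phi(f_j(x))}$, a product of derivatives of $\phi$ and $f_j$ (whose total norm is controlled by $e^{\|\phi\|_r}\max_{1\le k\le r}\max_j\|D^k f_j\|^{k}$), and a single factor $D^s\varphi(f_j(x))$ with $0\leq s\leq r$. The ``leading'' term in which every differentiation hits $\varphi$ is $\sum_j e^{\phi\circ f_j}D^r\varphi(f_j)(Df_j)^{\otimes r}$; the expanding bound and $\|D^r\varphi\|\leq\kappa\varphi$ (since $c_{r,r}=1$) make its contribution to the ratio at most $\sigma^{-r}\kappa$. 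Every other term with $s\geq 1$ carries a factor $D^s\varphi$ with $s\leq r-1$, which is bounded by $\kappa c_{r,s}\varphi\leq\kappa c_{r,r-1}\varphi$ using the recursion $c_{r,s}=c_{r,r-1}c_{r-1,s}$ and $c_{r-1,s}\leq 1$. The explicit formula for $c_{r,r-1}$ is chosen precisely so that summing all $\leq 2^{r+1}(r+1)!$ non-leading contributions (each multiplied by the uniform constant $e^{\|\phi\|_r}\max\|D^k f_j\|^k$) is bounded by $(1-\sigma^{-1})\kappa/2$. The remaining zero-order term (where no derivative hits $\varphi$) contributes a constant $C_0$ independent of $\kappa$. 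Altogether the ratio is at most $\sigma^{-r}\kappa+(1-\sigma^{-1})\kappa/2+C_0$, which, since $\sigma^{-r}+(1-\sigma^{-1})/2\leq(1+\sigma^{-1})/2<1$, is dominated by $\rho\kappa$ for any $\rho\in((1+\sigma^{-1})/2,1)$ and all $\kappa$ exceeding some $\kappa_r$.

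The main obstacle is the Fa\`a di Bruno bookkeeping: enumerating the terms arising from $D^r[e^{\phi\circ f_j}\varphi\circ f_j]$ so as to verify both the combinatorial count $2^{r+1}(r+1)!$ and the precise factor of derivatives of $\phi$ and $f_j$ appearing in the denominator of the stated $c_{r,r-1}$. This is where the somewhat baroque definition of $c_{r,r-1}$ is vindicated: each ingredient in its denominator corresponds to a worst-case uniform bound on one of the Fa\`a di Bruno factors. Once these estimates are in hand, combining the inductive invariance of $\Lambda^{r-1}$ with the top-order bound and invoking the preceding lemma produces the desired $\mathcal{L}_{f,\phi}\Lambda^r_\kappa\subset\Lambda^r_{\rho\kappa}$ for all $\kappa\geq\kappa_0$.
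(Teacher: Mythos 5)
Your proposal is correct and takes essentially the same approach as the paper: both reduce to the top-order estimate $\|D^r\mathcal{L}_{f,\phi}\varphi\|/\mathcal{L}_{f,\phi}\varphi\leq\rho\kappa$ via the preceding lemma, expand $D^r[e^{\phi\circ f_j}\varphi\circ f_j]$ by Leibniz/Fa\`a di Bruno with the same $2^{r+1}(r+1)!$ count, and group the resulting terms into the same three classes (the leading $D^r\varphi$ term bounded by $\sigma^{-r}\kappa$, the intermediate $D^k\varphi$ terms ($1\leq k\leq r-1$) absorbed via the recursive choice of $c_{r,s}$ into $(1-\sigma^{-1})\kappa/2$, and the $\kappa$-independent remainder), arriving at the same $\rho$ up to the free choice of constants.
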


\begin{proof}

Let $\varphi \in \Lambda_{\kappa}^{r}$. Then 
$\mathcal{L}_{f, \phi}\varphi > 0$. 
Given $x \in M$, let us write $f_j$, $j= 1, \dots, \deg(f)$ for the local inverse branches of $f$. 
For $H \in T_{x}M$ with $||H|| = 1$ we have that:
\begin{align*}
\frac{|D\mathcal{L}_{f , \phi}\varphi(x) \cdot H|}{\mathcal{L}_{f , \phi}\varphi(x)} & = \frac{|\sum_{j = 1}^{\deg(f)}\big[ e^{\phi(f_{j}x)}\varphi(f_{j}x)D\phi(f_{j}x) \cdot Df_{j}(x) \cdot H + e^{\phi(f_{j}x)}D\varphi(f_{j}x) \cdot Df_{j}(x) \cdot H \big]|}{\sum_{j = 1}^{\deg(f)} e^{\phi(f_{j}x)}\varphi(f_{j}x)} \\
& \leq \frac{\sum_{j = 1}^{\deg(f)}\big| e^{\phi(f_{j}x)}\varphi(f_{j}x)D\phi(f_{j}x) \cdot Df_{j}(x) \cdot H\big| + \big| e^{\phi(f_{j}x)}D\varphi(f_{j}x) \cdot Df_{j}(x) \cdot H \big|}{\sum_{j = 1}^{\deg(f)} e^{\phi(f_{j}x)}\varphi(f_{j}x)} \\
& \leq ||D\phi||_{0}\sigma^{-1} + \sigma^{-1}\kappa,
\end{align*}
hence $\sup_{x \in M}||\frac{D\mathcal{L}_{f , \phi}\varphi(x)}{\mathcal{L}_{f, \phi}\varphi(x)}|| \leq ||D\phi||_{0}\sigma^{-1} + \sigma^{-1}\kappa.$ Taking $\kappa_{0} := \frac{2||D\phi||_{0}}{\sigma- 1}$ we have that
$$
\sup_{x \in M}\Big\|\frac{D\mathcal{L}_{f , \phi}\varphi(x)}{\mathcal{L}_{f, \phi}\varphi(x)}\Big\|  \leq \frac{1 - \sigma^{-1}}{2}\kappa,
$$
for all $\kappa \geq \kappa_{0}.$ Thus we prove the proposition for the case $r = 1.$

In order to show how the argument works for higher orders,  let us first 
consider  the case $r = 2$. Take $\kappa > 0$ and $\varphi \in \Lambda^{2}_{\kappa}$. Using the chain rule, we have that $D^{2}(\mathcal{L}_{f, \phi} \varphi)(x)$  is a sum of the following seven terms:
\begin{eqnarray}
   & D^{2}\phi(x_{j})[Df_{j}(x)]^{2}e^{\phi(x_{j})}\varphi(x_{j})& \nonumber\\
   & D\phi(x_{j})D^{2}f_{j}(x)e^{\phi(x_{j})}\varphi(x_{j})& \nonumber\\
   & D\varphi(x_{j})Df_{j}(x)e^{\phi(x_{j})}D\phi(x_{j})Df_{j}(x) &\nonumber\\
   & D\varphi(x_{j})Df_{j}(x)e^{\phi(x_{j})}D\phi(x_{j})Df_{j}(x) &\nonumber\\
   &  \varphi(x_{j})D\phi(x_{j})Df_{j}(x)e^{\phi(x_{j})}D\phi(x_{j})Df_{j}(x) &\nonumber\\
   &  e^{\phi(x_{j})}D^{2}\varphi(x_{j})[Df_{j}(x)]^{2} & \nonumber\\
   &  e^{\phi(x_{j})}D\varphi(x_{j})D^{2}f_{j}(x). & \nonumber
\end{eqnarray}

Hence, for $x \in M$ and $H \in T_{x}M$, with $||H|| = 1$, we have:
$$
\frac{|D^{2}\mathcal{L}_{f , \phi}\varphi(x) \cdot H|}{\mathcal{L}_{f , \phi}\varphi(x)}  \leq  ||\phi||_{2}\sigma^{-1} +
||\phi||_{2} \cdot \sup_{z \in M}\{\|[Df(z)]^{-1}\|\} \cdot ||f||_{2} \cdot \sigma^{-1} +   2c^{1}_{2, 1}\kappa \sigma^{-1}||\phi||_{2} +
$$
$$
||\phi||_{2}^{2}\sigma^{-1} +
\kappa\sigma^{-1} + c^{1}_{2,1}\kappa \cdot \sup_{z \in M}\{\|[Df(z)]^{-1}\|\} \cdot ||f||_{2} \cdot \sigma^{-1}
=
$$
$$
||\phi||_{2}\sigma^{-1}\big( 1 + \sup_{z \in M}\{\|[Df(z)]^{-1}\|\} \cdot ||f||_{2}  + ||\phi||_{2} \big) +
\sigma^{-1}\kappa + \sigma^{-1}c^{1}_{2,1}\kappa\big(\sup_{z \in M}\{\|[Df(z)]^{-1}\}\| \cdot ||f||_{2}  + 2||\phi||_{2}\big).
$$
So
$$
\sup_{x \in M}\Big|\Big|\frac{D^{2}\mathcal{L}_{f , \phi}\varphi(x)}{\mathcal{L}_{f, \phi}\varphi(x)}\Big|\Big| \leq ||\phi||_{2}\sigma^{-1}\big( 1 + \sup_{z \in M}\{\|[Df(z)]^{-1}\|\} \cdot ||f||_{2}  + ||\phi||_{2} \big) +
\sigma^{-1}\kappa +
$$
$$
\sigma^{-1}c^{1}_{2,1}\kappa\big(\sup_{z \in M}\{\|[Df(z)]^{-1}\|\} \cdot ||f||_{2}  + 2||\phi||_{2}\big).
$$
Taking $\kappa_{0} := \frac{3 ||\phi||_{2}\big( 1 + \sup_{z \in M}||[Df(z)]^{-1}|| \cdot ||f||_{2}  + ||\phi||_{2} \big)}{\sigma - 1}$ and
$c_{2,1} := \frac{\sigma - 1}{3\big(\sup_{z \in M}||[Df(z)]^{-1}|| \cdot ||f||_{2}  + 2||\phi||_{2}\big)}$ we have
$$
\sup_{x \in M}\Big|\Big|\frac{D^{2}\mathcal{L}_{f , \phi}\varphi(x)}{\mathcal{L}_{f, \phi}\varphi(x)}\Big|\Big| \leq \frac{2 + \sigma^{-1}}{3}\kappa,
$$
for all $\kappa > \kappa_{0}.$ Therefore, using the case $r = 1$ and previous lemma, we prove the proposition for the case $r = 2.$

Now we proceed to the general case. It follows through an analogous computation of higher order derivatives of $\mathcal{L}_{f,\phi}\varphi$, by using the chain rule. Also by the previous lemma, all we need to prove is that 
$$
\sup_{x \in M}\Big|\Big|\frac{D^{r}\mathcal{L}_{f , \phi}\varphi(x)}{\mathcal{L}_{f, \phi}\varphi(x)}\Big|\Big| \leq \rho \kappa 
$$
for some $0<\rho < 1$.

Suppose 
that  the expression of the $k-$th derivative of $\mathcal{L}_{f,\phi}\varphi$ is a sum with  $m_k$  terms. Suppose that each one of this terms is a product with, at most, $p_k$ factors. 
As we apply the  chain rule, we obtain that 
 $m_{k+1}\leq m_k \cdot p_k$ and $p_{k+1}= p_{k}+ 2$.
 Since $n_1= 2$ and $p_1= 4$, by induction, this implies that 
 $m_r \leq 2^{r+1} (r+ 1)!$. Analogously to the case $r= 2$ 
 that we have already proved,  after we apply the triangle inequality, we can group the terms in the sum of the expression of $\frac{D^{r}(\mathcal{L}_{f, \phi} \varphi)}{\mathcal{L}_{f, \phi} \varphi}$ into three classes:
 \begin{enumerate}
 \item
 A class containing the unique term that involves $D^{r}\varphi$, whose norm has the following upper bound: 
 $$
\frac{ \sum_j \|e^{\phi(x_{j})}\|\|D^{r}\varphi(x_{j})\|\|[Df_{j}(x)] \|^r}{\sum_j e^{\phi(x_{j})} \varphi(x_j)}\leq \kappa \sigma^{-r}
 $$
 
 \item
 The class of terms involving $D^{k}\varphi$, for $1 \leq k < r$. 
 Given a term in this class,  its norm is bounded by:
 $$
\frac{ \sum_j \|e^{\phi(x_{j})}\|\|D^{b} \phi(x_j)\|\|D^{k}\varphi(x_{j})\|\cdot \Pi_{q; \sum i_q= r}\|[D^{i_q}f_j(x)]\|}{\sum_j e^{\phi(x_{j})} \varphi(x_j)}\leq  \qquad \qquad \qquad \qquad \qquad 
$$
$$
 \qquad \quad \kappa c_{r, k} \cdot e^{\|\phi\|_r} \max_{j, 1\le k\le r-1}\{1, \sup_{z \in M}\{ \|D^{k}f_j(z)\|^r\} \}\leq\kappa  \frac{1- \sigma^{-1}}{2^{r+2}(r+1)! },
 $$
 where $1\leq b \leq r- k$ can vary according to the term  in the class.
Recall that this class is a sum with at most $2^{r+ 1}(r+1)!$ terms.

 Finally, we have:
 
 \item 
 The class whose unique term involves $D^{r}\phi$. Such term has its norm bounded by:
  $$
\frac{\sum_j  \|e^{\phi(x_{j})}\|\|D^{r} \phi(x_j)\|\|\varphi(x_{j})\|\|[Df_{j}(x)]\|^{r}}{\sum_j e^{\phi(x_{j})} \varphi(x_j)}\leq \sup_{z\in M} \{\|Df_j(z)\|^r\} \| \phi\|_r := \hat \kappa.
 $$
 
 \end{enumerate}

Set $\kappa_r:= \frac{4 \hat \kappa}{1- \sigma^{-1}}$.
From the estimatives above, for any $\kappa\geq \kappa_r$ we obtain that 
$$
\sup_{x \in M}\Big|\Big|\frac{D^{r}\mathcal{L}_{f , \phi}\varphi(x)}{\mathcal{L}_{f, \phi}\varphi(x)}\Big|\Big| \leq \kappa \sigma^{-r}+ 
\kappa \frac{1- \sigma^{-1}}{2} + \hat \kappa \leq \kappa \frac{3+  \sigma^{-1}}{4}.
$$
So, just take $\rho= \frac{3+ \sigma^{-1}}{4}$ in the previous lemma.

\end{proof}

The next corollary tells us that  we can take  a constant invariant cone on  sufficiently small neighborhoods of
 the dynamic and
potential. It will be fundamental in the uniformity of the spectral gap.

\begin{corollary}\label{unigap}
Given $f \in D^{r}$ and $\phi \in C^{r}(M , \R)$ there exists neighborhoods $\mathcal{F}^{r}$ of $f$ and $\mathcal{W}^{r}$ of $\phi$, as well as constants
$\kappa > 0, c_{r , s} > 0$ and $\rho \in (0 , 1)$ such that:  if $(\hat{f} , \hat{\phi}) \in \mathcal{F}^{r} \times\mathcal{W}^{r}$ then $\mathcal{L}_{\hat{f} , \hat{\phi}} \Lambda_{\kappa}^{r} \subset \Lambda_{\rho\kappa}^{r}$.
\end{corollary}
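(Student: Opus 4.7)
The plan is straightforward: inspect the proof of Proposition~\ref{inv} and observe that every constant appearing there --- namely $\kappa_0$, $\rho$, and each $c_{r,s}$ --- is an explicit function of a small collection of quantities that depend continuously on $(f,\phi)$ in the $C^r$ topology. Concretely these quantities are the uniform expansion constant $\sigma$, the supremum $\sup_z\|[Df(z)]^{-1}\|$, $\|f\|_{C^r}$, and $\|\phi\|_{C^r}$.

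First I would fix $\sigma>1$ as in the opening of Section~\ref{Statement of the main results}, using the fact recalled there that the same $\sigma$ serves every $\hat f$ in a sufficiently small $C^r$-neighborhood of $f$. Setting $R_f:=\|f\|_{C^r}+1$ and $R_\phi:=\|\phi\|_{C^r}+1$, I would then take $\mathcal{F}^r\ni f$ and $\mathcal{W}^r\ni \phi$ small enough that for every $(\hat f, \hat\phi)\in \mathcal{F}^r\times\mathcal{W}^r$ one has $\|\hat f\|_{C^r}\le R_f$, $\|\hat\phi\|_{C^r}\le R_\phi$, and $\sup_z \|[D\hat f(z)]^{-1}\|\le \sigma^{-1}$. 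This step uses only continuity of the $C^r$-norms and of the pointwise inverse derivative under $C^1$ perturbations.

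Next I would rerun the proof of Proposition~\ref{inv}, but replacing $\|f\|_{C^r}$, $\|\phi\|_{C^r}$, and $\sup_z \|[Df(z)]^{-1}\|$ by the uniform upper bounds $R_f$, $R_\phi$, $\sigma^{-1}$ wherever they occur in the explicit expressions for $\kappa_0$, $\rho$, and the recursive tree $c_{r,s}=c_{r,s+1}\cdot c_{r-1,s}$. Building every constant from $\sigma, R_f, R_\phi$ at the outset yields genuine \emph{numbers} that do not depend on the particular $(\hat f, \hat \phi)\in \mathcal{F}^r\times \mathcal{W}^r$. The same chain-rule estimates and induction on $r$ used in Proposition~\ref{inv} then deliver $\mathcal{L}_{\hat f,\hat\phi}\Lambda_\kappa^r\subset \Lambda_{\rho\kappa}^r$ uniformly on the neighborhood.

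The main difficulty is essentially bookkeeping: one must verify that, after substituting the uniform bounds, the three classes of summands identified in the proof of Proposition~\ref{inv} (the single term involving $D^r\varphi$, the at most $2^{r+1}(r+1)!$ terms involving $D^k\varphi$ with $k<r$, and the single term involving $D^r\phi$) remain controlled so that their total is still bounded by $\rho\kappa$ for $\kappa$ sufficiently large and $\rho<1$. Since every such term was handled in Proposition~\ref{inv} by a bound that is monotone in the norms of $f$ and $\phi$, the monotone replacement by $R_f, R_\phi$ preserves all the required inequalities, and no new analytic input is needed.
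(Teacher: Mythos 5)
Your proposal is correct and takes essentially the same approach as the paper, which likewise derives the corollary by observing that the constants $\kappa_0$, $\rho$, and $c_{r,s}$ in the proof of Proposition~\ref{inv} depend only on $\sigma$, $\|f\|_{C^r}$, $\|\phi\|_{C^r}$ and $\sup_z\|[Df(z)]^{-1}\|$, and that the estimates are monotone in these, hence uniform on a small $C^r$-neighborhood of $(f,\phi)$. (Only a cosmetic slip: the paper's recursion is $c_{r,r-t}=c_{r,r-1}\cdot c_{r-1,r-t}$, not $c_{r,s}=c_{r,s+1}\cdot c_{r-1,s}$, but this does not affect the argument.)
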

\begin{proof}
It follows directly from the estimates  in the previous proposition, which are uniform for sufficiently small neighborhoods of $f$ and $\phi$.
\end{proof}

The next proposition, among other things, implies that the image of the cone $\Lambda^{r}_{\kappa}$  by the transfer operator  has finite diameter, 
for any sufficiently large values of $\kappa$.

\begin{proposition}
Given $0 < \rho < 1$, the cone $\Lambda^{r}_{\rho\kappa}$ has finite diameter with respect to  the projective metric induced by cone $\Lambda^{r}_{\kappa}$.
\end{proposition}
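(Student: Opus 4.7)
The plan is to compute explicit estimates for the quantities $\alpha(\varphi,\psi)$ and $\beta(\varphi,\psi)$ defining the Hilbert (projective) metric $\Theta$ associated to the larger cone $\Lambda^r_\kappa$, for arbitrary $\varphi,\psi \in \Lambda^r_{\rho\kappa}$. The gap $\rho<1$ between the defining constants of the two cones should give us room to subtract a positive multiple of one element from another while remaining in the larger cone. The bound on the diameter will then come from combining these estimates with the uniform a priori bounds on the pointwise ratios $\psi/\varphi$ that hold for any pair of positive functions whose logarithmic derivatives are controlled.

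First I would work out when $\psi - t\varphi$ lies in $\Lambda^r_\kappa$. Positivity requires $t < \inf_{x\in M} \psi(x)/\varphi(x)$, and for each $1\leq s\leq r$ the derivative bound amounts to
\[
\|D^s(\psi - t\varphi)(x)\| \leq \|D^s\psi(x)\| + t\|D^s\varphi(x)\| \leq \rho\kappa c_{r,s}\bigl(\psi(x)+t\varphi(x)\bigr),
\]
which will be dominated by $\kappa c_{r,s}(\psi(x) - t\varphi(x))$ as soon as $t \leq \frac{1-\rho}{1+\rho}\cdot\frac{\psi(x)}{\varphi(x)}$. This yields
\[
\alpha(\varphi,\psi) \geq \frac{1-\rho}{1+\rho}\inf_{x\in M}\frac{\psi(x)}{\varphi(x)}.
\]
An entirely symmetric argument, applied to $s\varphi - \psi$, gives
\[
\beta(\varphi,\psi) \leq \frac{1+\rho}{1-\rho}\sup_{x\in M}\frac{\psi(x)}{\varphi(x)}.
\]

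Combining the two, one obtains
\[
\Theta(\varphi,\psi) \leq 2\log\frac{1+\rho}{1-\rho} + \log\frac{\sup_{x}\psi/\varphi}{\inf_{x}\psi/\varphi}.
\]
To finish, I would bound the second term uniformly. Since any $\eta \in \Lambda^r_{\rho\kappa}$ satisfies $\|D\log \eta\| \leq \rho\kappa c_{r,1}$, integrating along a minimizing geodesic between any two points gives $\sup \eta/\inf \eta \leq \exp(\rho\kappa c_{r,1}\operatorname{diam}(M))$. Applying this to both $\varphi$ and $\psi$ controls the ratio $\psi/\varphi$ by $\exp(2\rho\kappa c_{r,1}\operatorname{diam}(M))$, producing a finite upper bound for $\Theta(\varphi,\psi)$ independent of the chosen pair.

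I do not expect any serious obstacle; the argument is purely algebraic once one observes that the slack $(1-\rho)/(1+\rho)$ is exactly what is needed to absorb the cross terms $t\varphi$ and $\psi$ into the derivative constraints. The only mildly delicate point is making sure the $s=1$ inequality (which via $D\log\eta$ controls the oscillation of $\eta$) is available, which is automatic as soon as $r\geq 1$; for $r=0$ the pointwise ratio argument would fail and the conclusion would not hold without an additional hypothesis, but this case is excluded here.
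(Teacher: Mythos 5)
Your argument is correct, and it rests on exactly the same two ingredients as the paper's proof: the $(1\pm\rho)$-slack that lets you subtract one cone element from another while staying in $\Lambda^r_\kappa$, and the control of the oscillation $\sup/\inf$ by integrating the logarithmic derivative (the $s=1$ constraint) along a geodesic. The only cosmetic difference is that you bound $\Theta(\varphi,\psi)$ for an arbitrary pair directly, whereas the paper fixes the second argument to be the constant function $\mathbf 1\in\Lambda^r_{\rho\kappa}$ (so that $D^s\mathbf 1=0$, making the algebra in the two claims slightly cleaner) and then implicitly invokes the triangle inequality for the pseudo-metric $\theta_\kappa$ to pass to the diameter. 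Both routes land on the same kind of explicit bound, roughly $\rho\kappa c_{r,1}\operatorname{diam}(M)+\log\tfrac{1+\rho}{1-\rho}$ up to a factor of two, so this is essentially the paper's proof written without the reduction to $\mathbf 1$.
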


\begin{proof}
Let us write $\theta_{\kappa}$ for the projective metrics of  $\Lambda^{r}_{\kappa}$.
Due to Theorem \ref{contracao} all we need to prove is that $\theta_{\kappa}(\varphi , 1)$ is uniformly bounded for all $\varphi \in \Lambda^{r}_{\rho\kappa}$. Take $\varphi \in \Lambda^{r}_{\rho\kappa}$.\\

\emph{Claim 1: $\beta_{\kappa}(\varphi , 1) \leq \frac{1}{\inf
\varphi(1 - \rho)}$.}\\

\noindent Set $t_{0} := \frac{1}{\inf \varphi (1 - \rho )}$. We will prove that $t_{0}\varphi - 1 \in \Lambda^{r}_{\kappa}$. 
As $\rho < 1$ we have $t_{0} \varphi - 1 > 0$, 
and given $x \in M, H \in T_{x}M$, with $||H|| = 1$, 
$$
\frac{|D^{s}(t_{0}\varphi - 1)(x) \cdot H|}{t_{0} \varphi(x) - 1} = \frac{t_{0}|D^{s}\varphi(x) \cdot H|}{t_{0}\varphi(x) - 1} \leq \frac{t_{0}\varphi(x)}{t_{0}\varphi(x) - 1} \cdot \rho\kappa c_{r , s}\leq \kappa c_{r , s} .
$$
\\

\emph{Claim 2: $\alpha_{\kappa}(\varphi , 1) \geq \frac{1}{\sup
\varphi(1 + \rho )}$.}\\

\noindent In fact, we will prove that for $t_{1} := \frac{1}{\sup \varphi(1 + \rho )}$ we have  $1 - t_{1}\varphi \in \Lambda^{r}_{\kappa}$. We have that $1 - t_{1} \varphi > 0$ and given $x \in M, H \in T_{x}M$, with $||H|| = 1$, we obtain 
$$
\frac{|D^{s}(1 - t_{1}\varphi)(x) \cdot H|}{1 - t_{1} \varphi(x)} \leq \frac{t_{1}|D^{s}\varphi(x) \cdot H|}{1 - t_{1} \varphi(x) } \leq \frac{t_{1}}{1 - t_{1} \varphi(x)} \cdot \varphi(x)\rho\kappa c_{r , s} \leq \kappa c_{r , s}.
$$
\\

It follows from  \emph{Claim 1 and Claim 2} that
\begin{align*}
\theta(\varphi , 1) & \leq \log\frac{\sup \varphi(1 + \rho )}{\inf \varphi (1 - \rho)} \\
                    & = \log \frac{\sup\varphi}{\inf\varphi} + \log\frac{1 + \rho }{1 - \rho} \\
                    & \leq \sup_{x \in M}\{\frac{||D\varphi(x)||}{\varphi(x)}\} \mbox{diam}(M) + \log\frac{1 + \rho }{1 - \rho} \\
                    & \leq \rho\kappa\mbox{diam}(M) + \log\frac{1 + \rho }{1 - \rho}.
\end{align*}

\end{proof}

Now we will prove the spectral gap property in
$C^{r}(M , \C)$. Before the proof, recall that follows from Geometric Hahn-Banach's Theorem that there exists a probability $\nu_{f ,
\phi}$ such that $\mathcal{L}_{f , \phi}^{\ast}\nu_{f , \phi} =
\lambda_{f , \phi}\nu_{f , \phi}$, where $\lambda_{f , \phi}$ is the spectral radius of $\mathcal{L}_{f, \phi}$ acting on $C^{0}$.
Fix a probability $\nu_{f ,\phi}$  with this property. We shall see that $\nu_{f , \phi}$ is unique.
We denote $\frac{\mathcal{L}_{f ,\phi}}{\lambda_{f , \phi}}$
by $\tilde{\mathcal{L}}_{f ,\phi}$.

\begin{theorem}
If $f \in D^{r}$ and $\phi \in C^{r}$ then $\mathcal{L}_{f,\phi|\mathcal{C}^{r}}$ has the spectral gap property.
\end{theorem}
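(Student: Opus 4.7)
My strategy is to exploit the cone machinery already established. From Proposition \ref{inv} one has $\mathcal{L}_{f,\phi}(\Lambda_{\kappa}^{r}) \subset \Lambda_{\rho\kappa}^{r}$, and the preceding proposition shows that $\Lambda_{\rho\kappa}^{r}$ has finite $\theta_{\kappa}$-diameter $D$. Theorem \ref{contracao} therefore gives that $\tilde{\mathcal{L}}_{f,\phi}$ is a strict contraction on $\Lambda_{\kappa}^{r}$ in $\theta_{\kappa}$, with factor $\Lambda := 1 - e^{-D} \in (0,1)$. This projective contraction drives everything that follows.

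To produce the dominant eigenfunction, pick any $\varphi_{0} \in \Lambda_{\kappa}^{r}$ and set $\varphi_{n} := \tilde{\mathcal{L}}_{f,\phi}^{n}\varphi_{0} / \int \tilde{\mathcal{L}}_{f,\phi}^{n}\varphi_{0}\,d\nu_{f,\phi}$. By $\tilde{\mathcal{L}}_{f,\phi}^{*}\nu_{f,\phi} = \nu_{f,\phi}$, each $\varphi_{n}$ lies in $\Lambda_{\rho\kappa}^{r}$ and has unit $\nu_{f,\phi}$-integral, while $\theta_{\kappa}(\varphi_{n},\varphi_{m}) \leq \Lambda^{\min(n,m)-1}D$. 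Applying Proposition \ref{cmpn2} with the monotone seminorms $\|\cdot\|_{L^{1}(\nu_{f,\phi})}$ and $\|\cdot\|_{0}$ converts this into a $C^{0}$-Cauchy property, and the cone bound $\|D^{s}\varphi_{n}\|_{0} \leq \rho\kappa c_{r,s}\|\varphi_{n}\|_{0}$ together with an Arzela-Ascoli argument produces a limit $h_{f,\phi} \in \overline{\Lambda_{\rho\kappa}^{r}} \cap C^{r}$ with $\mathcal{L}_{f,\phi}h_{f,\phi} = \lambda_{f,\phi}h_{f,\phi}$, normalized so that $\int h_{f,\phi}\,d\nu_{f,\phi} = 1$. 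Strict positivity of $h_{f,\phi}$ follows from $\mathcal{L}_{f,\phi}$ sending non-negative non-zero functions to strictly positive ones, and uniqueness in the cone is immediate from $\theta_{\kappa}(h,h')=0 \Rightarrow h \propto h'$.

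Next I decompose $C^{r}(M,\mathbb{C}) = E_{1} \oplus E_{0}$ with $E_{1} := \mathbb{C}\cdot h_{f,\phi}$ and $E_{0} := \{\psi \in C^{r}(M,\mathbb{C}) : \int \psi\,d\nu_{f,\phi} = 0\}$, both closed and $\mathcal{L}_{f,\phi}$-invariant, via the projection $\varphi \mapsto \nu_{f,\phi}(\varphi)h_{f,\phi}$. For exponential decay on $E_{0}$, take real-valued $\psi \in E_{0}$ and choose $C > 0$ proportional to $\|\psi\|_{C^{r}}$ large enough that $u := Ch_{f,\phi} + \psi$ and $v := Ch_{f,\phi}$ both belong to $\Lambda_{\kappa}^{r}$; this is feasible because $h_{f,\phi}$ is strictly positive inside $\Lambda_{\rho\kappa}^{r}$, so the cone defining inequalities become slack under scaling by $C$. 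Since $\int u\,d\nu_{f,\phi} = \int v\,d\nu_{f,\phi} = C$, a new application of Proposition \ref{cmpn2} gives $\|\tilde{\mathcal{L}}_{f,\phi}^{n}\psi\|_{C^{r}} = \|\tilde{\mathcal{L}}_{f,\phi}^{n}u - \tilde{\mathcal{L}}_{f,\phi}^{n}v\|_{C^{r}} \leq K\Lambda^{n-1}\|\psi\|_{C^{r}}$; the complex case reduces to the real one by splitting real and imaginary parts. This yields exactly the spectral gap as in Definition \ref{desgp}.

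The main obstacle is that a direct application of Proposition \ref{cmpn2} with $\|\cdot\|_{2} = \|\cdot\|_{0}$ only gives $C^{0}$-decay, and interpolation of this with the uniform $C^{r}$-bound on the cone produces at best $C^{r-\varepsilon}$-decay, which is not strong enough. To overcome this one needs a $C^{r}$-norm that is monotone with respect to the cone order, so that Proposition \ref{cmpn2} directly delivers $C^{r}$-decay. The natural choice is the weighted norm $\|\varphi\|^{*} := \max_{0 \leq s \leq r}\sup_{x}\|D^{s}\varphi(x)\|/(\kappa c_{r,s})$ with $c_{r,0} := 1$: it is equivalent to the standard $C^{r}$-norm (the cone defining inequalities provide the two-sided bound), and if $-v \preceq u \preceq v$ with $v$ in the cone then the identity $D^{s}u = (D^{s}(v+u) - D^{s}(v-u))/2$ yields $\|D^{s}u(x)\| \leq \kappa c_{r,s}v(x)$, hence $\|u\|^{*} \leq \|v\|^{*}$, which is precisely the monotonicity required to close the argument.
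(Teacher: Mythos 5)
Your overall plan mirrors the paper's (cone contraction $\Rightarrow$ eigenvector from iterates $\Rightarrow$ splitting $E_1\oplus E_0$ $\Rightarrow$ exponential decay on $E_0$), but the mechanism you use to upgrade the projective contraction to $C^r$-decay is genuinely different: the paper carries out a direct estimate on $\|D^s(\varphi_n-\alpha\varphi_{n+k})\|_0$ using the coefficient $\alpha(\varphi_n,\varphi_{n+k})$ and the fact that the relevant difference lies in $\overline{\Lambda^r_{\kappa_0}}$, whereas you introduce a cone-adapted weighted $C^r$-norm so that Proposition~\ref{cmpn2} applies at once at the $C^r$-level. That is a cleaner formulation of the same cone-order idea. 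It is essentially correct, but two points need fixing.

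First, the normalization of the weighted norm is off and makes your monotonicity claim false as stated. With $c_{r,0}:=1$ (so that the $s=0$ term is $\|\varphi\|_0/\kappa$), the identity $D^su=(D^s(v+u)-D^s(v-u))/2$ gives, correctly, $\|D^su(x)\|\le \kappa c_{r,s}\,v(x)$ for all $s$, hence $\|D^su\|_0/(\kappa c_{r,s})\le\|v\|_0$. But for a cone element $v$ one has $\|D^sv\|_0/(\kappa c_{r,s})\le\|v\|_0$ for $s\ge1$ and the $s=0$ term is only $\|v\|_0/\kappa$; taking $v$ constant shows $\|v\|^*$ can be as small as $\|v\|_0/\kappa$, so $\|u\|^*\le\|v\|^*$ fails in general (you only get $\|u\|^*\le\kappa\|v\|^*$). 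The fix is simply to set $c_{r,0}:=1/\kappa$, i.e.\ use
\[
\|\varphi\|^{*}:=\max\Bigl(\|\varphi\|_0,\ \max_{1\le s\le r}\|D^s\varphi\|_0/(\kappa c_{r,s})\Bigr),
\]
for which $\|v\|^*=\|v\|_0$ whenever $v\in\Lambda^r_\kappa$ and the monotonicity $\|u\|^*\le\|v\|^*$ now closes. (Also, the parenthetical "the cone defining inequalities provide the two-sided bound" is a non sequitur: equivalence with the standard $C^r$-norm is just that the weights are finitely many positive constants.)

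Second, the Arzel\`a--Ascoli step is too weak to produce $h_{f,\phi}\in C^r$. A $C^r$-bounded sequence yields, by Arzel\`a--Ascoli, only a subsequential $C^{r-1}$-limit whose $(r-1)$-th derivative is Lipschitz; continuity of $D^r h_{f,\phi}$ does not follow. You should instead apply the very weighted-norm machinery you set up to $\varphi_n-\varphi_{n+k}$ (which have equal $\nu_{f,\phi}$-integrals and $\theta_\kappa$-distance $O(\Lambda^{n-1})$) to conclude that $(\varphi_n)$ is Cauchy in $C^r$; then the limit is automatically in $C^r$, and the rest of your argument carries through unchanged. This is also closer in spirit to the paper's Claim~1, which proves $C^r$-Cauchyness directly.

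Once these two patches are made, the proposal is a correct and slightly more streamlined route to the same conclusion.
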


\begin{proof}

Take $\kappa_{0} , c_{r,s}$ and $\rho$ as in the proposition
\ref{inv}. Let $\varphi, \psi \in \Lambda_{\kappa_{0}}^{r}$ be and
$\theta_{+}$ be the projective metric associated to cone of the positive functions. By theorem \ref{contracao}, for $n , k \geq 1$
we have:
\begin{equation}\label{equa5}
\theta_{+}\big(\tilde{\mathcal{L}}^{n + k}_{f,\phi}(\varphi) ,
\tilde{\mathcal{L}}^{n}_{f,\phi}(\psi)\big) \leq
\theta_{\kappa_{0}}\big(\tilde{\mathcal{L}}^{n +
k}_{f,\phi}(\varphi) , \tilde{\mathcal{L}}^{n}_{\phi}(\psi)\big)
\leq \Delta \tau^{n-1},
\end{equation}
where $\Delta$ is the $\theta_{\kappa_{0}}-$diameter of the cone
$\Lambda_{\rho\kappa_{0}}^{r}$ and $\tau := 1 - e^{-\Delta} \in (0 ,
1)$. Note that $\big(\varphi_{n} :=
\tilde{\mathcal{L}}^{n}_{f,\phi}(\varphi)\big)_{n \geq 1}$ is
Cauchy with respect to  the metrics $\theta_{+}$. It is well known that $\theta_{+}$ is
complete (see e.g. \cite{Vi97}), so there exists $h_{\varphi} \in
C_{+}$ such that $\tilde{\mathcal{L}}^{n}_{f,\phi}(\varphi)
\xrightarrow[]{\theta_{+}}h_{\varphi}$ and $\int h_{\varphi}d\nu_{f
, \phi} = \int \varphi d\nu_{f ,\phi}$. As
$\int\tilde{\mathcal{L}}^{n}_{f,\phi}(\varphi)d\nu_{f,\phi} = \int
\varphi d\nu_{f,\phi}$ we can apply the proposition \ref{cmpn2}.  We just take the second seminorm in the statement
of proposition  \ref{cmpn2} to be 
 the $C_0$-norm (denoted by $\| \cdot \|_0$) and the first  seminorm to be the absolute value of the integral with respect to $\nu_{f,\phi}$. 
 
 Thus
$\tilde{\mathcal{L}}^{n}_{f,\phi}(\varphi)
\xrightarrow[]{\mathcal{C}^{0}}h_{\varphi}$ and thereby
$\mathcal{L}_{f,\phi}h_{f,\varphi} = \lambda_{f ,\phi}
h_{\varphi}$.\\

\emph{Claim 1: If $\varphi \in \Lambda^{r}_{\kappa_{0}}$ then $||\varphi_{n+k} -  \varphi_{n}||_{r} \leq \Delta \tau^{n-1}\big( \kappa_{0}(e^{\Delta} + 2) + e^{\Delta} \big) ||h_{\varphi}||_{0}.$ }\\

\noindent In fact, applying the proposition \ref{cmpn2} as explained in the beginning of this proof, 
besides the estimate (\ref{equa5}), we have
\begin{align}\label{equa9}
||\varphi_{n+k} -  \varphi_{n}||_{0} & \leq  \big(e^{\theta^{+}(\varphi_{n+k} , \varphi_{n})} -1 \big) \cdot ||\varphi_{n+k}||_{0} \nonumber \\
                                     & \leq  \theta^{+}(\varphi_{n+k} , \varphi_{n})e^{\Delta}||\varphi_{n+k}||_{0} \nonumber \\
                                     & \leq  \Delta \tau^{n-1}e^{\Delta}||\varphi_{n+k}||_{0}.
\end{align}
Note also that, as $\int\varphi_{n} d\nu_{f,\phi} = \int\varphi_{n + k}
d\nu_{f,\phi}$ and $\varphi_{n}, \varphi_{n +k}$ are strictly positive continuous functions then
$$
\beta_{\kappa_{0}}(\varphi_{n} , \varphi_{n+k}) \geq 1 \geq \alpha_{\kappa_{0}}(\varphi_{n} , \varphi_{n+k}),
$$
hence
$$
\Theta_{\kappa_{0}}(\varphi_{n} , \varphi_{n+k }) \leq \Delta \tau^{n-1} \Rightarrow \frac{\beta_{\kappa_{0}}(\varphi_{n} , \varphi_{n+k})}{\alpha_{\kappa_{0}}(\varphi_{n} , \varphi_{n+k})} \leq e^{\Delta\tau^{n-1}} \Rightarrow |1 - \alpha_{\kappa_{0}}(\varphi_{n} , \varphi_{n+k})| \leq \Delta\tau^{n-1}.
$$
Thus,
$$
||D^{s}(\varphi_{n} - \varphi_{n+k} )||_{0}  \leq  ||D^{s}(\varphi_{n} - \alpha_{\kappa_{0}}(\varphi_{n} , \varphi_{n+k}) \cdot \varphi_{n+k}) ||_{0} + |\alpha_{\kappa_{0}}(\varphi_{n} , \varphi_{n+k}) - 1| \cdot ||D^{s} \varphi_{n+k} ||_{0} \leq
$$
$$
c_{r , s} \kappa_{0} ||\varphi_{n} - \alpha_{\kappa_{0}}(\varphi_{n} , \varphi_{n+k}) \cdot \varphi_{n+k}||_{0} + |\alpha_{\kappa_{0}}(\varphi_{n} , \varphi_{n+k}) - 1| \cdot ||D^{s} \varphi_{n+k} ||_{0} \leq
$$
$$
c_{r , s} \kappa_{0} \Big( ||\varphi_{n} - \varphi_{n+k}||_{0} + |1 - \alpha_{\kappa_{0}}(\varphi_{n} , \varphi_{n+k})| \cdot ||\varphi_{n+k}||_{0} \Big) + c_{r , s}\kappa_{0}|\alpha_{\kappa_{0}}(\varphi_{n} , \varphi_{n+k}) - 1| \cdot ||\varphi_{n+k} ||_{0} \leq
$$
$$
 c_{r , s}\kappa_{0} \Big[ ||\varphi_{n} - \varphi_{n+k}||_{0} + ||\varphi_{n+k} ||_{0}2\Delta\tau^{n-1} \Big] \leq
$$
$$
c_{r , s}\kappa_{0} \Big[ \Delta \tau^{n-1}e^{\Delta}||\varphi_{n+k}||_{0} + ||\varphi_{n+k} ||_{0}2\Delta\tau^{n-1} \Big] \leq
$$
\begin{equation}\label{equa7}
c_{r , s}\kappa_{0}\Delta \tau^{n-1}||\varphi_{n+k}||_{0}(e^{\Delta} + 2).
\end{equation}
As $\varphi_{n}$ converges uniformly, using the estimates  (\ref{equa9}) and (\ref{equa7}) we have that $\varphi_{n}$ is a Cauchy sequence in $C^{r}$. Furthermore, doing $k \to +\infty$,  we have that
 \begin{equation}\label{equa8}
||h_{\varphi} - \varphi_{n}||_{r} \leq \Delta \tau^{n-1}\big( \kappa_{0}(e^{\Delta} + 2) + e^{\Delta} \big) ||h_{\varphi}||_{0}.
\end{equation}
%
\\

It follows from \emph{Claim 1} that $\varphi_{n}$ converges to $h_{\varphi}$ in the $C^{r}$ norm and $h_{\varphi} \in \Lambda_{\kappa_{0}}^{r}$.\\

\emph{Claim 2: $\ker(\mathcal{L}_{\phi} - \lambda_{f,\phi} I)\cap\mathcal{C}^{r}(M , \C)$ has dimension one.}\\
\noindent In fact; since that $\lambda_{f,\phi} \in \R$ is enough prove that $\ker(\mathcal{L}_{\phi} - \lambda_{f,\phi} I)\cap\mathcal{C}^{r}(M , \R)$ has dimension one. Let $\ds h_{f , \phi} := \lim_{n \to + \infty}\tilde{\mathcal{L}}^{n}_{f,\phi}(1)$  and $u \in \ker(\mathcal{L}_{f,\phi|\mathcal{C}^{r}} -\, \lambda_{f,\phi} I)\,\cap\, \Lambda_{\kappa_{0}}^{r}$. By (\ref{equa5}) there exists $t_{1} > 0$ such that $t_{1}u = h$. Thereby, by \emph{Claim 1}, for all $\varphi \in \Lambda_{\kappa_{0}}^{r}$ we have that $\tilde{\mathcal{L}}^{n}_{f,\phi}(\varphi) \xrightarrow[]{\mathcal{C}^{r}} \int\varphi d\nu_{f, \phi}\cdot h_{f , \phi}$. Given $v \in \ker(\mathcal{L}_{f,\phi|\mathcal{C}^{r}} - \lambda_{f,\phi} I) \cap\mathcal{C}^{r}(M , \R)$, there exists a constant $B > 0$ such that $v + B$ is a element of $\Lambda_{\kappa_{0}}^{r}$; hence $v = \lim\tilde{\mathcal{L}}^{n}_{f,\phi}(v + B) - \lim\tilde{\mathcal{L}}^{n}_{f,\phi}(B) = \int vd\nu_{f,\phi}\cdot h_{f,\phi}$. Thus, $\ker(\mathcal{L}_{f,\phi|\mathcal{C}^{r}}\, -\, \lambda_{f,\phi} I) = \{ th_{f,\phi} : t \in \C \}$. \\

Let $E_{1} := \ker(\mathcal{L}_{f,\phi|\mathcal{C}^{r}}\, -\, \lambda_{f,\phi} I)$  and $E_{0} := \{\varphi \in \mathcal{C}^{r}(M , \C) : \int\varphi d\nu_{f,\phi} = 0 \}$. If $h_{f,\phi}$ is defined as the previous claim then $\int h_{f,\phi}d\nu_{f,\phi} = 1$ and $E_{1} = \{ t\cdot h_{f,\phi} : t \in \C \}$. Note that $E_{0}, E_{1}$ are $\mathcal{L}_{f,\phi|\mathcal{C}^{r}}-$invariant closed subspaces and $\mathcal{C}^{r}(M , \C) = E_{1} \oplus E_{0}$.\\

\emph{Claim 3: $spec(\tilde{\mathcal{L}}_{f,\phi|E_{0}}) \subset B(0 , \lambda_{1})$, where $0 < \lambda_{1} < 1$.}\\

\noindent In fact; endow $E_{0}$ of the $\mathcal{C}^{r}$ norm and take $\varphi \in E_{0}\cap \mathcal{C}^{r}(M , \R)$, with $||\varphi||_{r} = 1$. So $\varphi + 1 +\frac{1}{\kappa_{0}} \in \Lambda^{r}_{\kappa_{0}}$. By previous discussions, we know that $\ds\tilde{\mathcal{L}}^{n}_{f,\phi}( \varphi + 1 +\frac{1}{\kappa_{0}}) \xrightarrow[]{\mathcal{C}^{r}} \int(\varphi + 1+\frac{1}{\kappa_{0}})d\nu_{f,\phi} \cdot h_{f,\phi} = (1+\frac{1}{\kappa_{0}}) \cdot h_{f,\phi}$. Thus:
$$
||\tilde{\mathcal{L}}^{n}_{f,\phi}(\varphi)||_{r} = ||\tilde{\mathcal{L}}^{n}_{f,\phi}(\varphi + 1+\frac{1}{\kappa_{0}}) - \tilde{\mathcal{L}}^{n}_{f,\phi}(1+\frac{1}{\kappa_{0}})||_{r} \leq ||\tilde{\mathcal{L}}^{n}_{f,\phi}(\varphi + 1+\frac{1}{\kappa_{0}}) -(1+\frac{1}{\kappa_{0}}) \cdot h_{f,\phi}||_{r} +$$ $$||\tilde{\mathcal{L}}^{n}_{f,\phi}(1+\frac{1}{\kappa_{0}}) - (1+\frac{1}{\kappa_{0}}) \cdot h_{f,\phi}||_{r} \leq
\Delta \tau^{n-1}\big( \kappa_{0}(e^{\Delta} + 2) + e^{\Delta} \big) (3+\frac{2}{\kappa_{0}})||h_{f,\phi}||_{0}.
$$
Analogously, for $\varphi \in E_{0}$ complex function with $||\varphi||_{r} = 1$ we have $||\tilde{\mathcal{L}}^{n}_{f,\phi}(\varphi)||_{r} \leq \Delta \tau^{n-1}\big( \kappa_{0}(e^{\Delta} + 2) + e^{\Delta} \big) (3+\frac{2}{\kappa_{0}})||h_{f,\phi}||_{0}$.
Hence $\tilde{\mathcal{L}}^{n}_{f,\phi|E_{0}}$ is a contraction in the $\mathcal{C}^{r}$ norm for $n$ big enough and so $spec(\tilde{\mathcal{L}}_{f,\phi|E_{0}}) \subset B(0 , \lambda_{1})$, where $0 < \lambda_{1} < 1$. \\

The spectral gap property then follows from the previous claims.
\end{proof}

\begin{corollary}
Given $f \in D^{r}$ and $\phi \in C^{r}$ there exists a unique $\nu \in \ker(\mathcal{L}_{f , \phi|C^{r}}^{\ast} - \lambda_{f , \phi}I)$ such that $\nu(1) = 1$.
\end{corollary}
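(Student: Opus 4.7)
The plan is to deduce uniqueness directly from the spectral gap decomposition of $\mathcal{L}_{f,\phi}$ on $C^{r}(M,\C)$ established in the previous theorem. Recall we have the splitting $C^{r}(M,\C) = E_{1}\oplus E_{0}$ where $E_{1} = \langle h_{f,\phi}\rangle$ is the one-dimensional eigenspace for $\lambda_{f,\phi}$, and $E_{0} = \{\varphi \in C^{r}(M,\C) : \int \varphi\, d\nu_{f,\phi} = 0\}$ is closed and $\tilde{\mathcal{L}}_{f,\phi}$-invariant, with $\tilde{\mathcal{L}}_{f,\phi|E_{0}}$ a strict contraction in the $C^{r}$ norm (Claim 3). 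Combining these, for every $\varphi \in C^{r}(M,\C)$ the decomposition $\varphi = \bigl(\int \varphi\, d\nu_{f,\phi}\bigr) h_{f,\phi} + \psi$ with $\psi \in E_{0}$ gives, in $C^{r}$ norm,
$$
\tilde{\mathcal{L}}^{n}_{f,\phi}(\varphi) \;\xrightarrow[n\to\infty]{}\; \Bigl(\int \varphi\, d\nu_{f,\phi}\Bigr) \cdot h_{f,\phi}.
$$

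Now suppose $\nu \in \ker(\mathcal{L}_{f,\phi|C^{r}}^{\ast} - \lambda_{f,\phi} I)$ satisfies $\nu(1) = 1$. For every $\varphi \in C^{r}(M,\C)$ and $n \geq 1$ we have
$$
\nu(\varphi) \;=\; \lambda_{f,\phi}^{-n}\, \nu\bigl(\mathcal{L}^{n}_{f,\phi}\varphi\bigr) \;=\; \nu\bigl(\tilde{\mathcal{L}}^{n}_{f,\phi}\varphi\bigr).
$$
Since $\nu$ is continuous on $C^{r}(M,\C)$ by hypothesis, passing to the limit and using the $C^{r}$-convergence above yields
$$
\nu(\varphi) \;=\; \nu(h_{f,\phi}) \cdot \int \varphi\, d\nu_{f,\phi}.
$$
Specializing to $\varphi = 1$ and using that $\nu_{f,\phi}$ is a probability, we get $1 = \nu(1) = \nu(h_{f,\phi})\cdot \int 1\, d\nu_{f,\phi} = \nu(h_{f,\phi})$. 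Substituting back, $\nu(\varphi) = \int \varphi\, d\nu_{f,\phi}$ for every $\varphi \in C^{r}(M,\C)$, hence $\nu = \nu_{f,\phi}$ as elements of $[C^{r}]^{\ast}$.

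There is essentially no obstacle beyond ensuring the $C^{r}$-convergence $\tilde{\mathcal{L}}^{n}_{f,\phi}\varphi \to \bigl(\int \varphi\, d\nu_{f,\phi}\bigr) h_{f,\phi}$ holds for \emph{all} $\varphi \in C^{r}$ and not merely for positive cone elements; but this is already packaged in the spectral gap splitting proved above, so the argument reduces to a one-line limit passage. Existence of at least one such $\nu$ (namely $\nu_{f,\phi}$) is guaranteed by the earlier Hahn--Banach construction; the corollary thus follows.
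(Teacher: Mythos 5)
Your proof is correct and follows essentially the same route as the paper: use that $\nu$ is an eigenfunctional to write $\nu(\varphi)=\nu(\tilde{\mathcal{L}}^{n}_{f,\phi}\varphi)$, invoke the $C^{r}$-convergence $\tilde{\mathcal{L}}^{n}_{f,\phi}\varphi \to (\int\varphi\,d\nu_{f,\phi})\,h_{f,\phi}$ from the spectral gap theorem to pass to the limit, and normalize via $\nu(1)=1$. The only cosmetic difference is that you determine $\nu(h_{f,\phi})=1$ by plugging $\varphi=1$ into the derived identity, whereas the paper computes $\nu(h_{f,\phi})=\lim\nu(\tilde{\mathcal{L}}^{n}_{f,\phi}1)=\nu(1)=1$ directly; these are the same observation.
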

\begin{proof}
Let $\nu \in \ker(\mathcal{L}_{f , \phi|C^{r}}^{\ast} - \lambda_{f , \phi}I)$ be such that $\nu(1) = 1$. Given $\varphi \in C^{r}(M , \R)$, we know from the previous theorem that
$\tilde{\mathcal{L}}_{f , \phi}^{n}\varphi \rightarrow \int \varphi d\nu_{f,\phi} \cdot h_{f,\phi}$, hence
$$
\nu(\varphi) = \lim\nu(\tilde{\mathcal{L}}_{f , \phi}^{n}\varphi) = \nu(\int\varphi d\nu_{f,\phi} \cdot h_{f,\phi}) = \int\varphi d\nu_{f,\phi} \nu(h_{f,\phi}) =
\int\varphi d\nu_{f,\phi} \lim\nu(\tilde{\mathcal{L}}_{f , \phi}^{n}1) =
$$
$$
\int\varphi d\nu_{f,\phi}.
$$
 This prove the uniqueness.
\end{proof}

The next corollary assures us uniformity in the spectral gap. Let's remember that $\mathcal{D}^{r}\times C^{r}(M , \R) \ni (f , \phi) \longmapsto \frac{d\mu_{f,\phi}}{d\nu_{f,\phi}}$ is continuous, endowing the image with the usual uniform $C^0$ norm (see e.g. \cite{PU10}).
\begin{corollary}\label{pri}
Given $f \in D^{r}$ and $\phi \in C^{r}(M , \R)$ there exists neighborhoods $\mathcal{F}^{r}$ of $f$ and $\mathcal{W}^{r}$ of $\phi$,  constants $k \geq 0$ and $\tau \in (0 , 1)$ such that:  if $(\hat{f} , \hat{\phi}) \in \mathcal{F}^{r} \times\mathcal{W}^{r}$ then given $\varphi \in C^{r}(M , \R)$ we have
$$
||\tilde{\mathcal{L}}_{\hat{f} , \hat{\phi}} \varphi - \int\varphi d\nu_{\hat{f} , \hat{\phi}} \cdot h_{\hat{f},\hat{\phi}}||_{r} \leq k\tau^{n}||\varphi||_{r},
$$
for all $n \in \N$.
\end{corollary}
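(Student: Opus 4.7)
The plan is to track the constants appearing in the proof of the preceding spectral gap theorem and verify they can all be chosen uniformly on a common neighborhood of $(f,\phi)$.

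First, I invoke Corollary \ref{unigap} to obtain neighborhoods $\mathcal{F}^{r}$ of $f$ and $\mathcal{W}^{r}$ of $\phi$, together with constants $\kappa_{0}>0$, $\{c_{r,s}\}$ and $\rho\in (0,1)$, such that for every $(\hat f,\hat\phi)\in \mathcal{F}^{r}\times\mathcal{W}^{r}$ one has $\mathcal{L}_{\hat f,\hat\phi}\,\Lambda_{\kappa}^{r}\subset\Lambda_{\rho\kappa}^{r}$ for all $\kappa\ge\kappa_{0}$. The crucial point is that the cone itself does not depend on $(\hat f,\hat\phi)$. Next I would observe that the proof of the preceding proposition gives the explicit bound
\[
\operatorname{diam}_{\theta_{\kappa_{0}}}(\Lambda_{\rho\kappa_{0}}^{r})\;\le\;\rho\kappa_{0}\operatorname{diam}(M)+\log\tfrac{1+\rho}{1-\rho}=:\Delta,
\]
which is independent of $(\hat f,\hat\phi)$. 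By Theorem \ref{contracao}, the contraction factor $\tau:=1-e^{-\Delta}\in(0,1)$ in the projective metric is therefore also uniform.

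Second, I would re-run the argument of the spectral gap theorem with this uniformly chosen data. Estimate \eqref{equa8} then yields, for every $\varphi\in\Lambda_{\kappa_{0}}^{r}$ and every $(\hat f,\hat\phi)$ in the neighborhood,
\[
\|\tilde{\mathcal{L}}_{\hat f,\hat\phi}^{\,n}\varphi-h_{\hat f,\hat\phi,\varphi}\|_{r}\;\le\;C\,\tau^{n-1}\|h_{\hat f,\hat\phi,\varphi}\|_{0},
\]
with $C=\Delta(\kappa_{0}(e^{\Delta}+2)+e^{\Delta})$ independent of $(\hat f,\hat\phi)$, and $h_{\hat f,\hat\phi,\varphi}=\int\varphi\,d\nu_{\hat f,\hat\phi}\cdot h_{\hat f,\hat\phi}$. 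To pass from the cone to arbitrary $\varphi\in C^{r}(M,\R)$, I would reproduce the trick of Claim 3 in the theorem: choose a constant $B=B(\varphi)$ so that $\varphi+B\in\Lambda_{\kappa_{0}}^{r}$, with $B\le C'\|\varphi\|_{r}$ for a constant $C'$ depending only on $\kappa_{0}$ and $\{c_{r,s}\}$. Writing $\tilde{\mathcal{L}}_{\hat f,\hat\phi}^{\,n}\varphi=\tilde{\mathcal{L}}_{\hat f,\hat\phi}^{\,n}(\varphi+B)-\tilde{\mathcal{L}}_{\hat f,\hat\phi}^{\,n}(B)$ and applying the previous estimate to both summands gives the desired exponential bound, and finally the complex case follows by splitting real and imaginary parts.

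Third, the one quantity that still has to be controlled uniformly is $\|h_{\hat f,\hat\phi}\|_{0}$ (and implicitly $\|h_{\hat f,\hat\phi,\varphi}\|_{0}\le\|\nu_{\hat f,\hat\phi}\|\cdot\|\varphi\|_{0}\cdot\|h_{\hat f,\hat\phi}\|_{0}$). This is where I would use the continuity of $(f,\phi)\mapsto d\mu_{f,\phi}/d\nu_{f,\phi}=h_{f,\phi}$ in the $C^{0}$ topology, recalled just before the statement of the corollary, together with the fact that $\nu_{\hat f,\hat\phi}$ is a probability. Shrinking $\mathcal{F}^{r}\times\mathcal{W}^{r}$ if necessary, we obtain $\|h_{\hat f,\hat\phi}\|_{0}\le 2\|h_{f,\phi}\|_{0}$ throughout the neighborhood, yielding an absolute constant $k$ such that the inequality in the statement holds for every $n\in\mathbb{N}$.

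I expect the only real subtlety to be the last step: one must rule out degeneration of $h_{\hat f,\hat\phi}$ or $\nu_{\hat f,\hat\phi}$ as $(\hat f,\hat\phi)$ moves. Because the operator $\mathcal{L}_{\hat f,\hat\phi}$ does not depend continuously on $\hat f$ in the norm topology of $C^{r}$, the uniformity cannot be obtained from classical perturbative spectral theory; it has to come, as above, from the uniform cone contraction together with the $C^{0}$-continuity of the invariant density. Once that step is in hand, the rest of the argument is a direct transcription of the uniform estimates established in the previous proofs.
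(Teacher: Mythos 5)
Your proposal is correct and follows the same route the paper takes: the paper's own proof is a one-liner referring to Corollary~\ref{unigap} and the estimates in the spectral-gap theorem, and you have simply made explicit that the cone, its $\theta_{\kappa_0}$-diameter $\Delta$ (hence $\tau = 1-e^{-\Delta}$), and the passage from $\Lambda^{r}_{\kappa_0}$ to general $\varphi$ via $\varphi+B$ are all uniform on the neighborhood from Corollary~\ref{unigap}, with the last ingredient being the $C^{0}$-continuity of $h_{\hat f,\hat\phi}$ recalled immediately before the statement. One small remark worth noting: uniform control of $\|h_{\hat f,\hat\phi}\|_{r}$ (not merely $\|h_{\hat f,\hat\phi}\|_{0}$) is also needed when estimating $\|\varphi\|_{r}$-terms, but this comes for free since $h_{\hat f,\hat\phi}\in\Lambda^{r}_{\kappa_{0}}$ with the uniform $\kappa_{0}$, so $\|h_{\hat f,\hat\phi}\|_{r}\le (1+\kappa_{0}\sum_{s}c_{r,s})\,\|h_{\hat f,\hat\phi}\|_{0}$.
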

\begin{proof}
It follows directly from estimates of the \emph{Claim 3}, items (i) and (ii) of the previous theorem, as well as Corollary \ref{unigap}.
\end{proof}



\section{Proof of the main results}\label{Proofs}

\subsection{Linear response formula}

This section is devoted to the a proof of our Linear response formula result (Theorem~\ref{thm:linres}).

We already know that  statistical stability holds for expanding dynamics:

\begin{proposition}\label{exp3}
If $r \geq 1$, then the following maps are continuous:
\begin{itemize}
\item[(i)] $\mathcal{D}^{r}\times C^{r}(M , \R) \ni (f , \phi) \longmapsto P_{\topp}(f , \phi)$;
\item[(ii)] $\mathcal{D}^{r}\times C^{r}(M , \R) \ni (f , \phi) \longmapsto \frac{d\mu_{f,\phi}}{d\nu_{f,\phi}}$, endowing the image with the $C^0$-norm;
\item[(iii)] $\mathcal{D}^{r}\times C^{r}(M , \R) \ni (f , \phi) \longmapsto \nu_{f , \phi}$, endowing the image with the weak$^{\ast}$ topology;
\item[(iv)] $\mathcal{D}^{r}\times C^{r}(M , \R) \ni (f , \phi) \longmapsto \mu_{f , \phi}$, endowing the image with the weak$^{\ast}$ topology.
\end{itemize}
\end{proposition}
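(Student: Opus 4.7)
The plan is to leverage the uniform spectral gap of Corollary~\ref{pri} together with the pointwise continuity of fixed iterates of the transfer operator. The main obstacle, as emphasized in the introduction, is that $(f,\phi)\mapsto \mathcal{L}_{f,\phi}$ is \emph{not} continuous in operator norm with respect to $f$, so classical perturbation theory does not apply directly; the key is to combine the \emph{parameter-uniformity} of the spectral gap with the \emph{pointwise} (in $\hat f,\hat\phi$) continuity of $\mathcal{L}_{\hat f,\hat\phi}\varphi$ applied to a fixed test $\varphi$.

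First I would prove an auxiliary statement: for every fixed $\varphi\in C^0(M,\R)$ and every fixed $n\geq 1$, the map $(\hat f,\hat\phi)\mapsto \mathcal{L}_{\hat f,\hat\phi}^n\varphi\in C^0(M,\R)$ is continuous in a small neighborhood of $(f,\phi)$. Writing
$$\mathcal{L}_{\hat f,\hat\phi}\varphi(x)=\sum_{j=1}^{\deg(f)}e^{\hat\phi(\hat f_j(x))}\varphi(\hat f_j(x)),$$
the local inverse branches $\hat f_j$ depend continuously in $C^0$ on $\hat f$ taken close to $f$ in $C^1$, so uniform continuity of $\varphi$ and of $\hat\phi$ yields $\|\mathcal{L}_{\hat f,\hat\phi}\varphi-\mathcal{L}_{f,\phi}\varphi\|_0\to 0$; iteration extends this to arbitrary fixed $n$.

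To prove (i), fix any $x_0\in M$ and set $Q_n(\hat f,\hat\phi):=\mathcal{L}_{\hat f,\hat\phi}^{n+1} 1(x_0)/\mathcal{L}_{\hat f,\hat\phi}^{n} 1(x_0)$; each $Q_n$ is continuous in $(\hat f,\hat\phi)$ by the auxiliary step. On the other hand, applying Corollary~\ref{pri} to $\varphi\equiv 1$ (and using $\nu_{\hat f,\hat\phi}(1)=1$) gives $\mathcal{L}_{\hat f,\hat\phi}^n 1=\lambda_{\hat f,\hat\phi}^n\big(h_{\hat f,\hat\phi}+r_n(\hat f,\hat\phi)\big)$ with $\|r_n\|_r\leq k\tau^n$ uniformly in the neighborhood. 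Since $h_{\hat f,\hat\phi}$ lies in the invariant cone $\Lambda^r_\kappa$ and is normalized by $\nu_{\hat f,\hat\phi}(h_{\hat f,\hat\phi})=1$, the cone property forces a uniform lower bound $h_{\hat f,\hat\phi}(x_0)\geq c>0$, so $Q_n\to \lambda_{\hat f,\hat\phi}$ \emph{uniformly}. A uniform limit of continuous functions being continuous, $\lambda_{\hat f,\hat\phi}$, and hence $P_{\topp}(\hat f,\hat\phi)=\log \lambda_{\hat f,\hat\phi}$, depends continuously on $(f,\phi)$.

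The remaining items are essentially corollaries. For (ii), the sequence $\lambda_{\hat f,\hat\phi}^{-n}\mathcal{L}_{\hat f,\hat\phi}^n 1$ converges to $h_{\hat f,\hat\phi}$ in $C^0$ uniformly in $(\hat f,\hat\phi)$ by Corollary~\ref{pri}, while each term is continuous in $(\hat f,\hat\phi)$ into $C^0$ by the auxiliary step together with the just-proved continuity of $\lambda$; hence $h_{\hat f,\hat\phi}$ is $C^0$-continuous. For (iii), the identity
$$\int\varphi\,d\nu_{\hat f,\hat\phi}=\lim_{n\to\infty}\frac{\lambda_{\hat f,\hat\phi}^{-n}\mathcal{L}_{\hat f,\hat\phi}^n\varphi(x_0)}{h_{\hat f,\hat\phi}(x_0)}\qquad(\varphi\in C^r),$$
with uniform convergence from Corollary~\ref{pri}, gives continuity on $C^r$ test functions, which extends to all $\varphi\in C^0$ by density and the uniform bound $|\nu_{\hat f,\hat\phi}(\varphi)|\leq\|\varphi\|_0$; this is weak$^*$ continuity. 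Finally, (iv) follows from $\mu_{\hat f,\hat\phi}=h_{\hat f,\hat\phi}\nu_{\hat f,\hat\phi}$ by combining (ii) with (iii), since $\int\varphi\,d\mu_{\hat f,\hat\phi}=\int(\varphi h_{\hat f,\hat\phi})\,d\nu_{\hat f,\hat\phi}$ and the product $\varphi h_{\hat f,\hat\phi}$ is continuous.
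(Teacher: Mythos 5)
Your argument is correct, but it takes a genuinely different route from the paper's: the paper simply cites \cite{PU10} for this proposition, whereas you derive all four items internally from the uniform spectral gap (Corollary~\ref{pri}) together with the elementary observation that, for each \emph{fixed} test function $\varphi$ and each \emph{fixed} $n$, the map $(\hat f,\hat\phi)\mapsto \mathcal{L}^n_{\hat f,\hat\phi}\varphi\in C^0$ is continuous. This is the right way to reconcile the lack of operator-norm continuity of $\hat f\mapsto\mathcal{L}_{\hat f,\hat\phi}$ with the uniform-in-parameter decay $\|\tilde{\mathcal{L}}^n_{\hat f,\hat\phi}\varphi-\nu_{\hat f,\hat\phi}(\varphi)h_{\hat f,\hat\phi}\|_r\le k\tau^n\|\varphi\|_r$: exchanging a pointwise-continuous family with a uniform limit. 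What your approach buys is a self-contained proof of statistical stability within the paper's own cone machinery, making the external citation dispensable; the paper's citation, by contrast, is shorter and lets the text get to the harder differentiability statement faster. Two small points worth tightening if you flesh this out. First, the inductive step in the auxiliary lemma is not literally "apply the $n=1$ case again," because the input function $\mathcal{L}^{n-1}_{\hat f,\hat\phi}\varphi$ itself varies with the parameter; you should split $\mathcal{L}^n_{\hat f,\hat\phi}\varphi-\mathcal{L}^n_{f,\phi}\varphi = \mathcal{L}_{\hat f,\hat\phi}(\mathcal{L}^{n-1}_{\hat f,\hat\phi}\varphi-\mathcal{L}^{n-1}_{f,\phi}\varphi)+(\mathcal{L}_{\hat f,\hat\phi}-\mathcal{L}_{f,\phi})\mathcal{L}^{n-1}_{f,\phi}\varphi$ and use a local uniform bound on $\|\mathcal{L}_{\hat f,\hat\phi}\|_{C^0\to C^0}$ on the first term. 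Second, your proof uses Corollary~\ref{pri}, and in the text a remark just before that corollary invokes the $C^0$-continuity of $h_{f,\phi}$ (precisely item (ii) here, attributed to \cite{PU10}); to rule out circularity you should note, as you implicitly do when deducing $h_{\hat f,\hat\phi}(x_0)\ge c>0$, that the cone invariance of Corollary~\ref{unigap} together with the normalization $\int h_{\hat f,\hat\phi}\,d\nu_{\hat f,\hat\phi}=1$ already gives the uniform two-sided bound $e^{-\kappa\,\mathrm{diam}(M)}\le h_{\hat f,\hat\phi}\le e^{\kappa\,\mathrm{diam}(M)}$ needed for the uniform constant in Corollary~\ref{pri}, so that corollary in fact does not require (ii) as input.
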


\begin{proof}
See e.g. \cite{PU10}.
\end{proof}

Now we will discuss the linear response formula results. We want to obtain differentiability results for $P_{\topp}(f,\phi), \frac{d\mu_{f,\phi}}{\nu_{f,\phi}}$ and $\mu_{f,\phi}$ with respect to $f$ and $\phi$.
Note that these elements
can be obtained by the spectral projection of $\mathcal{L}_{f,\phi}$ on the eigenspace  associated to the dominant eigenvalue.
Thereby, we will discuss about the differentiability of the Spectral Theory point
of view (for details about Spectral Theory see for instance \cite{K95}).

Let $A : E \rightarrow E$ be a bounded operator with the spectral gap property, let $\gamma$ a closed curve $C^{1}$ such that the bounded connected component
determined by it contains the dominant eigenvalue of $A$  and the  region which is exterior to the curve
 contains the  other spectral component of $A$. By semi continuity of the spectral components, there is a $\delta > 0$ such that if $\|A - \hat{A}\| < \delta$ then
$\gamma$ separates two spectral components of $\hat{A}$. We denote the spectral component of $\hat{A}$ which is limited by the connected component by $\Lambda_{\hat{A}}$. If we denote by
$P_{\hat{A}}$  the spectral projection associated to  the spectral component $\Lambda_{\hat{A}}$  of $\hat{A}$, by the holomorphic functional calculus we have
 that $P_{\hat{A}} = \frac{1}{2\pi i}\int_{\gamma}(zI - \hat{A})^{-1}dz$.
 As the curve $\gamma$ is fixed, the differentiability of $P_{\hat{A}}$ with respect to
$\hat{A}$, is the same of $(zI - \hat{A})^{-1}$ with respect to $\hat{A}$. It is known by the spectral theory that, in fact, $(zI - \hat{A})^{-1}$ is analytic with respect to $\hat{A}$ (we still have uniform convergence of Taylor series when we vary $z$ along of the curve $\gamma$) and therefore $P_{\hat{A}}$ is analytic with respect to $\hat{A}$.

Let us now return to the case of the transfer operator.

\begin{remark}
By \cite{Fr79} given a compact and connected smooth Riemannian manifold $M$, the space of maps $C^{r}(M , M)$ can be seen as a manifold modeled by the Banach space $C^{r}(M , \R^{2\mbox{dim}(M)})$. Furthermore, given $f \in C^{r}(M , M)$ its tangent space can be identified with $\Gamma^{r}_{f} := \{ \gamma \in C^{r}(M , TM) : \gamma(x) \in T_{f(x)}M, \forall x \in M\}.$
\end{remark}

Thus, the differentiability  of the spectral projection would imply the differentiability $\lambda_{f,\phi}$ and $\mu_{f,\phi}$ with respect the $f$ and $\phi$ if the transfer operators depended
smoothly with respect to dynamics and the potential. However, in general,
 the transfer operator depends smoothly (in fact, analytically) only on the potential(see \cite{PU10}  ). So, we have

\begin{proposition}\label{ana1}
Fixed $f \in \mathcal{D}^{r}$, the following maps are analytic:

 \begin{itemize}
  \item[i.] $C^{r}(M , \R) \ni \phi \mapsto \lambda_{f,\phi}$;
  \item[ii.] $C^{r}(M , \R) \ni \phi \mapsto h_{f,\phi} \in C^{r}(M , \R)$;
  \item[iii.] $C^{r}(M , \R) \ni \phi \mapsto \nu_{f,\phi} \in [C^{r}(M , \R)]^{\ast}$;
  \item[iv.] $C^{r}(M , \R) \ni \phi \mapsto \mu_{f,\phi} \in [C^{r}(M , \R)]^{\ast}$.
 \end{itemize}

\end{proposition}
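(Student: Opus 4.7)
The plan is to reduce the statement to the holomorphic functional calculus for the transfer operator, exploiting that for fixed $f$ the dependence on $\phi$ is much better than the dependence on $f$.

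First I would check that the map $C^{r}(M,\C) \ni \phi \mapsto \mathcal{L}_{f,\phi} \in L(C^{r}(M,\C))$ is analytic (as a map between complex Banach spaces). The identity $\mathcal{L}_{f,\phi} g = \mathcal{L}_{f,0}(e^{\phi} g)$ reduces this to the analyticity of $\phi \mapsto e^{\phi}$ and of the multiplication map. Since $C^{r}(M,\C)$ is a Banach algebra, the exponential series converges in the $C^{r}$-norm uniformly on bounded sets, so $\phi \mapsto e^{\phi}$ is entire. The multiplication operator $M_{e^\phi}: g \mapsto e^\phi g$ depends continuously and linearly on $e^\phi$ in operator norm (again using the Banach algebra property), and $\mathcal{L}_{f,0}$ is a fixed bounded operator; composing yields the claimed analyticity of $\phi \mapsto \mathcal{L}_{f,\phi}$.

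Next I would invoke the spectral gap theorem proved in Section~\ref{prelim}. Around $\lambda_{f,\phi_0}$ choose a small smooth closed curve $\gamma \subset \C$ separating $\{\lambda_{f,\phi_0}\}$ from $\Sigma_{\phi_0}:=\mathrm{spec}(\mathcal{L}_{f,\phi_0}) \setminus \{\lambda_{f,\phi_0}\}$. Since $\phi \mapsto \mathcal{L}_{f,\phi}$ is continuous in the operator norm (by the previous step), upper semicontinuity of the spectrum guarantees that for $\phi$ in a complex neighborhood of $\phi_0$ the curve $\gamma$ still encloses exactly the simple dominant eigenvalue $\lambda_{f,\phi}$. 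The spectral projection is then given by
\[
P_{f,\phi}=\frac{1}{2\pi i}\oint_{\gamma}(zI-\mathcal{L}_{f,\phi})^{-1}\,dz,
\]
and the integrand is an analytic function of $\phi$ taking values in $L(C^{r}(M,\C))$, uniformly in $z\in\gamma$ (because $\gamma$ stays a positive distance from $\mathrm{spec}(\mathcal{L}_{f,\phi})$ and the resolvent identity expresses the variation as a norm-convergent Neumann series). Consequently $\phi \mapsto P_{f,\phi}$ is analytic in operator norm.

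To extract the individual objects I would use that $P_{f,\phi}$ is rank one with $P_{f,\phi}(g)=\nu_{f,\phi}(g)\,h_{f,\phi}$ and the normalizations $\int h_{f,\phi}\,d\nu_{f,\phi}=1$, $\nu_{f,\phi}(1)=1$. Applying $P_{f,\phi}$ to the constant function $1$ gives $h_{f,\phi}=P_{f,\phi}(1)$, so $\phi \mapsto h_{f,\phi}\in C^{r}(M,\C)$ is analytic. Fixing a continuous linear functional $\ell$ on $C^{r}(M,\C)$ with $\ell(h_{f,\phi_0})\neq 0$ (e.g.\ $\ell(g)=\int g\,dm$ for Lebesgue $m$, which is nonzero on the strictly positive $h_{f,\phi_0}$), by continuity $\ell(h_{f,\phi})\neq 0$ on a neighborhood of $\phi_0$ and the relations $\mathcal{L}_{f,\phi}h_{f,\phi}=\lambda_{f,\phi}h_{f,\phi}$, $P_{f,\phi}(g)=\nu_{f,\phi}(g)h_{f,\phi}$ yield
\[
\lambda_{f,\phi}=\frac{\ell(\mathcal{L}_{f,\phi}h_{f,\phi})}{\ell(h_{f,\phi})},\qquad
\nu_{f,\phi}=\frac{\ell\circ P_{f,\phi}}{\ell(h_{f,\phi})}\in[C^{r}(M,\C)]^{\ast},
\]
both ratios of analytic maps with nonvanishing denominator, hence analytic. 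Finally $\mu_{f,\phi}(g)=\nu_{f,\phi}(h_{f,\phi}g)$, which is analytic in $\phi$ as the composition of the bounded bilinear multiplication $C^{r}\times C^{r}\to C^{r}$ with the analytic pair $(h_{f,\phi},\nu_{f,\phi})$.

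The main subtlety I foresee is the second step: verifying that the resolvent $(zI-\mathcal{L}_{f,\phi})^{-1}$ is jointly analytic in $(z,\phi)$ in a tube around $\gamma$, since naive bounds only give analyticity for each fixed $z$. One handles this by the Neumann expansion $(zI-\mathcal{L}_{f,\phi})^{-1}=\sum_{n\geq 0}(zI-\mathcal{L}_{f,\phi_0})^{-n-1}(\mathcal{L}_{f,\phi}-\mathcal{L}_{f,\phi_0})^{n}$, which converges in operator norm uniformly in $z\in\gamma$ provided $\|\mathcal{L}_{f,\phi}-\mathcal{L}_{f,\phi_0}\|<\mathrm{dist}(\gamma,\mathrm{spec}(\mathcal{L}_{f,\phi_0}))^{-1}$; the uniform bound is then integrated along $\gamma$ to deliver analyticity of $P_{f,\phi}$ in the operator-norm topology, which is what is needed for all four items.
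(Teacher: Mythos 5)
Your proposal is correct and follows the same route the paper implicitly takes: the paper deduces Proposition~\ref{ana1} from the preceding abstract discussion (holomorphic functional calculus applied to the spectral projection of an operator with the spectral gap property) plus a citation to \cite{PU10} for the fact that $\phi \mapsto \mathcal{L}_{f,\phi}$ is analytic. You helpfully fill in the detail the paper outsources: the factorization $\mathcal{L}_{f,\phi}g = \mathcal{L}_{f,0}(e^\phi g)$ together with the Banach-algebra structure of $C^r(M,\C)$ gives a clean, self-contained proof that the operator family is entire in $\phi$. The extraction of $h_{f,\phi}$, $\lambda_{f,\phi}$, $\nu_{f,\phi}$, $\mu_{f,\phi}$ from the rank-one projection via a fixed functional $\ell$ with $\ell(h_{f,\phi_0})\neq 0$ is correct and matches the normalizations $\nu_{f,\phi}(1)=1$, $\mu_{f,\phi}=h_{f,\phi}\nu_{f,\phi}$ used in the paper.

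Two small slips worth fixing. First, the Neumann expansion should read
\[
(zI-\mathcal{L}_{f,\phi})^{-1} \;=\; \sum_{n\geq 0}\bigl[(zI-\mathcal{L}_{f,\phi_0})^{-1}(\mathcal{L}_{f,\phi}-\mathcal{L}_{f,\phi_0})\bigr]^{n}\,(zI-\mathcal{L}_{f,\phi_0})^{-1},
\]
not $\sum_n (zI-\mathcal{L}_{f,\phi_0})^{-n-1}(\mathcal{L}_{f,\phi}-\mathcal{L}_{f,\phi_0})^n$; the two are not equal because the operators need not commute. Second, the convergence radius should be stated in terms of the resolvent norm, i.e.\ convergence holds uniformly on $\gamma$ provided
\[
\|\mathcal{L}_{f,\phi}-\mathcal{L}_{f,\phi_0}\| \cdot \sup_{z\in\gamma}\|(zI-\mathcal{L}_{f,\phi_0})^{-1}\| < 1,
\]
rather than a comparison with $\mathrm{dist}(\gamma,\mathrm{spec}(\mathcal{L}_{f,\phi_0}))^{-1}$; the inequality $\|(zI-T)^{-1}\|\geq \mathrm{dist}(z,\mathrm{spec}\,T)^{-1}$ goes the wrong way for a general Banach-space operator. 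Neither slip affects the conclusion, since $\gamma$ is compact and the resolvent of $\mathcal{L}_{f,\phi_0}$ is continuous on it, so the supremum is finite and the neighborhood of $\phi_0$ can be chosen accordingly.
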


 On the other hand, the transfer operator does not vary continuously with respect to the  dynamics, even in the expanding case, as an operator acting on $C^{r}(M , \R)$ (see \cite[Example 4.14]{CV13}). Nevertheless, in \cite{BCV16}, It is proved that the transfer operator is smooth with respect to smooth dynamics in a weaker sense, namely, requiring less regularity of its counter-domain. More accurately, there we have proven:

\begin{proposition} \label{ledif}
Let $r \geq 1$, $1 \leq k \leq r$. Let $\text{Diff}^{r}_{loc}(M, M)$ be the local $C^r$-diffeomorphisms space on a compact and connected Riemannian manifold $M$ and let $\phi\in C^{r}(M,\mathbb R)$ be some fixed potential.
Then the map
$$
\begin{array}{ccc}
\text{Diff}^{r}_{loc}(M)  & \to & L(C^{r}(M, \mathbb R), C^{r-k}(M,\mathbb R))\\
f &  \mapsto & \cL_{f, \phi}
\end{array}
$$
is $C^{k}$.
\end{proposition}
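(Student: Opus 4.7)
The plan is to compute the derivatives of $f \mapsto \cL_{f,\phi}$ explicitly by a chain-rule argument, working locally with inverse branches. First, cover $M$ by finitely many open sets $U_\alpha$ that are evenly covered by $f_0$, so that $f_0$ restricts to a $C^r$-diffeomorphism from each connected component of $f_0^{-1}(U_\alpha)$ onto $U_\alpha$. For any $\hat f$ sufficiently close to $f_0$ in $C^r$, the same evenly-covered structure persists, with local inverse branches $\hat f_j : U_\alpha \to M$ depending on $\hat f$. On each such chart,
\[
\cL_{f,\phi}g(x) \;=\; \sum_{j=1}^{\deg(f)} e^{\phi(f_j(x))}\, g(f_j(x)), \qquad x \in U_\alpha,
\]
so the regularity question reduces to controlling, as functions of $f$, the inverse-branch map $f \mapsto f_j$ together with the compositions $f_j \mapsto \phi \circ f_j$ and $f_j \mapsto g \circ f_j$.

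The crucial intermediate statement is that the inverse-branch operator $\Phi_j : f \mapsto f_j$ is $C^k$ as a map from a $C^r$-neighborhood of $f_0$ in $C^r(M,M)$ into $C^{r-k}(U_\alpha, M)$. To see this, differentiate implicitly the identity $f \circ f_j = \mathrm{id}$: in a direction $H \in \Gamma^{r}_{f}$ one obtains
\[
D\Phi_j(f)\cdot H \;=\; -[Df(f_j(\cdot))]^{-1}\, H(f_j(\cdot)),
\]
which a priori lies only in $C^{r-1}$ because $[Df]^{-1}$ is itself only $C^{r-1}$. Iterating, each new $f$-derivative introduces another factor whose regularity is one lower than that of $f$, so exactly one order of smoothness is lost per differentiation, and the $k$-th derivative of $\Phi_j$ is a continuous $k$-multilinear map $(\Gamma^{r}_{f})^k \to C^{r-k}(U_\alpha, M)$.

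The rest is assembled via the chain rule. For $g \in C^r$ fixed, the right-composition operator $h \mapsto g \circ h$ is $C^k$ as a map $C^{r-k}(U_\alpha, M) \to C^{r-k}(U_\alpha, \R)$, with $j$-th derivative involving $D^j g$ and bounded linearly (hence continuously) by $\|g\|_{C^r}$; the same holds with $\phi$ in place of $g$. Composing with $\Phi_j$, multiplying the two resulting factors, and summing over branches produces $\cL_{f,\phi}$ as a $C^k$-function of $f$ with values in $L(C^r(M,\R), C^{r-k}(M,\R))$. The explicit first-order formula is precisely the one appearing in Theorem~A, with the operator $H \mapsto T_{j|f} \odot H$ playing the role of $D\Phi_j(f)\cdot H$.

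The main obstacle is the bookkeeping of the Taylor remainder in the operator norm on $L(C^r, C^{r-k})$. A Fa\`a di Bruno expansion expresses $D^k_f \cL_{f,\phi} \cdot (H_1, \dots, H_k)\cdot g$ as a finite sum of terms, each a product of derivatives of $\phi$, derivatives of $g$, derivatives of $f_j$, and factors of the form $[Df(f_j)]^{-1}$. One must verify that every such term is bounded by a constant times $\|g\|_{C^r}$ in the $C^{r-k}$ norm, and that the multilinear dependence on $(H_1,\dots,H_k)$ is continuous in $f$ for the $C^r$-topology. This counting is routine but is the most delicate technical part of the argument.
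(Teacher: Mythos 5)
Your proposal follows the same route as the paper and its source: the paper imports Proposition~\ref{ledif} from [BCV16] without reproving it here, and the key ingredient it quotes from that reference in the proof of Theorem~\ref{thm:linres} --- the ``Local Differentiability of inverse branches'' lemma, asserting that $C^r(M,M)\ni f\mapsto (f_1,\dots,f_s)\in C^{r-k}$ is a $C^k$ map --- is exactly the intermediate map $\Phi_j$ you isolate via implicit differentiation of $f\circ f_j=\mathrm{id}$; the subsequent chain-rule and Fa\`a di Bruno assembly you describe is the expected completion of the argument.

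One caution is worth making explicit in the ``bookkeeping'' step you defer to routine computation: at the endpoint $k=r$, the uniformity in $g$ over the unit ball of $C^r$ demanded by the operator norm on $L(C^r,C^0)$ is genuinely problematic. The Fr\'echet remainder of the leading term, which involves $D^{r-1}g(f_j(\cdot))$, is controlled by the modulus of continuity of $D^r g$, and this modulus is \emph{not} uniform over $\|g\|_{C^r}\le 1$ (already on the circle, $g_n=n^{-r}\sin(n\cdot)$ furnishes a counterexample), so the argument as sketched only yields Fr\'echet $C^k$ regularity for $k\le r-1$. For $k<r$ the bound $\|D^{k+1}g\|_\infty\le\|g\|_{C^r}$ gives a uniform Lipschitz estimate on $D^k g$ and hence the required uniformity over the unit ball. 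This restriction does not affect anything downstream in the paper, which only invokes the proposition through the hypotheses of Theorem~\ref{dif1} with $j=1,\dots,r-1$, but you should either restrict the range of $k$ or explain how the endpoint case is handled.
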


\begin{remark}
 It follows from the estimates done in the proof of the previous proposition that, in small neighborhoods of the local diffeomorphism, the derivative of $f \mapsto \cL_{f, \phi}$ is uniformly bounded.
\end{remark}

Therefore, we can not guarantee  that $\lambda_{f,\phi}$ and $\mu_{f,\phi}$ are smooth with respect to dynamics $f$ using the classical spectral theory, because it would only guarantee such differentiability if we had the differentiability of the transfer operator
 as operator acting on the same space.

For the reader convenience, we recall here the context of \cite[Theorem 8.1]{GL06} .
Let $r \geq 2$ be, $\mathcal{B}^{0} \supset \dots \supset \mathcal{B}^{r}$ be Banach spaces, $I$ be a Banach manifold and $\{A_{t}\}_{t \in I}$ be a bounded linear operators family acting
on the Banach spaces $\mathcal{B}^{i}$ such that $I \ni t \mapsto A_{t} \in L(\mathcal{B}^{1}, \mathcal{B}^{0})$ is continuous, where $L(\mathcal{B}^{i}, \mathcal{B}^{i-j})$ is the bounded linear operators space of $B^{i}$ on $B^{i-j}$. Furthermore, assume that
\begin{equation}\label{GL1}
\exists M > 0, \forall t \in I, \forall g \in \mathcal{B}^{0}, \; ||A_{t}^{n}g||_{\mathcal{B}^{0}} \leq C M^{n}||g||_{\mathcal{B}^{0}}
\end{equation}
and
\begin{equation}\label{GL2}
\exists \alpha < M, \forall t \in I, \; ||A_{t}^{n}g||_{\mathcal{B}^{1}} \leq C \alpha^{n}||g||_{\mathcal{B}^{1}} + C M^{n}||g||_{\mathcal{B}^{0}}.
\end{equation}
Assume also that for $j = 1, \ldots, r-1$, there exists the $j-$th derivative of the map $I \ni t \mapsto A_{t} \in L(\mathcal{B}^{r} , \mathcal{B}^{r - j})$. Denoting by $Q_{j}$ the
$j-$th derivative that acts from $I$ to $L(\mathcal{B}^{r} , \mathcal{B}^{r - j})$ , assume that for all $i \in [j,r]$ we have that $Q_{i}$ is bounded as a map of
$I$ in $L(\mathcal{B}^{i} , \mathcal{B}^{i - j})$. In these terms, It follows from\cite[Theorem 8.1]{GL06} that:

\begin{theorem}\label{dif1}
For $\varrho > \alpha$ and $\delta > 0$, denote by $V_{\varrho,\delta}$ the set of complex numbers $z$ such that $|z| \geq \varrho$ and, for all $1 \leq k \leq
r$, assume that the distance of $z$ to spectrum of $A_{t|\mathcal{B}^{k}}$ is bigger or equal than $\delta$. Then, the map
$I \times V_{\varrho,\delta} \ni (t,z) \mapsto (z - A_{t})^{-1} \in L(\mathcal{B}^{r} , \mathcal{B}^{0})$ is $C^{r-1}$ .
\end{theorem}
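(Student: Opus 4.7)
The plan is to prove joint $C^{r-1}$-regularity by an inductive computation of the derivatives $D_t^k(z-A_t)^{-1}$ via the resolvent identity, tracking carefully how much regularity in the domain is lost at each differentiation. The key insight is that a single derivative in $t$ costs one level of smoothness in the target space, and since only $r$ levels are available, one can differentiate at most $r-1$ times while still landing in $L(\mathcal{B}^r,\mathcal{B}^0)$.

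First I would establish uniform boundedness of $(z-A_t)^{-1}$ on each scale $\mathcal{B}^k$ for $1\le k\le r$. The spectral hypothesis $\mathrm{dist}(z,\mathrm{spec}(A_{t|\mathcal{B}^k}))\ge\delta$, combined with (\ref{GL1})--(\ref{GL2}) and Hennion's quasi-compactness theorem, should give a uniform bound $\|(z-A_t)^{-1}\|_{\mathcal{B}^k\to\mathcal{B}^k}\le C(\varrho,\delta)$ for $(t,z)$ in compact subsets of $I\times V_{\varrho,\delta}$; joint continuity of the resolvent in $(t,z)$ as an element of $L(\mathcal{B}^1,\mathcal{B}^0)$ then follows from continuity of $t\mapsto A_t\in L(\mathcal{B}^1,\mathcal{B}^0)$ together with the identity
\begin{equation*}
(z-A_s)^{-1} - (z-A_t)^{-1} = (z-A_s)^{-1}(A_s-A_t)(z-A_t)^{-1}.
\end{equation*}

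Next, using the same identity and the expansion $A_s - A_t = Q_1(t)(s-t) + o(|s-t|)$ in $L(\mathcal{B}^r,\mathcal{B}^{r-1})$, I would verify that
\begin{equation*}
D_t(z-A_t)^{-1}\,H = (z-A_t)^{-1}\,[Q_1(t)H]\,(z-A_t)^{-1},
\end{equation*}
which, read right-to-left, maps $\mathcal{B}^r\to\mathcal{B}^r\to\mathcal{B}^{r-1}\to\mathcal{B}^{r-1}$ and hence defines an element of $L(\mathcal{B}^r,\mathcal{B}^{r-1})$. Induction on $k$, using $D_t Q_j(t) = Q_{j+1}(t)$ and the product/derivative rule, shows that $D_t^k(z-A_t)^{-1}$ is a finite sum of compositions of the form $(z-A_t)^{-1}Q_{j_1}(t)(z-A_t)^{-1}\cdots Q_{j_m}(t)(z-A_t)^{-1}$ with $j_1+\cdots+j_m=k$. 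Each such composition loses $k$ degrees of smoothness in total and therefore lies in $L(\mathcal{B}^r,\mathcal{B}^{r-k})$. For $k=r-1$ this is contained in $L(\mathcal{B}^r,\mathcal{B}^1)\subset L(\mathcal{B}^r,\mathcal{B}^0)$, and since every factor depends continuously on $(t,z)$ in the appropriate norm, one obtains joint $C^{r-1}$-regularity. Analyticity in $z$ is classical and commutes with differentiation in $t$.

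The hard part will be that $t\mapsto A_t$ is \emph{not} continuous as a map into any single space $L(\mathcal{B}^k)$ — only into $L(\mathcal{B}^k,\mathcal{B}^{k-1})$ — so every difference estimate for the resolvent has to be organized to invoke only this weaker continuity. This is precisely what forces the $k$-th derivative into $L(\mathcal{B}^r,\mathcal{B}^{r-k})$ rather than $L(\mathcal{B}^r)$, and it is the reason the theorem stops at $C^{r-1}$: one more differentiation would require a resolvent acting on $\mathcal{B}^0$, on which no spectral hypothesis is available. A secondary technical point is the rigorous $o(|s-t|)$-estimate for the remainder at each order, which uses the assumption that each $Q_j$ is bounded as a map $I\to L(\mathcal{B}^i,\mathcal{B}^{i-j})$ for every $i\in[j,r]$, so that the chain rule can be applied at every intermediate scale.
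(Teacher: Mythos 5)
The paper does not prove Theorem~\ref{dif1}: it is stated verbatim as a citation of \cite[Theorem 8.1]{GL06}, and no proof is given in the manuscript. Your sketch is therefore being measured against the Gou\"ezel--Liverani argument, and it captures the essential mechanism of that proof — iterating the resolvent identity, writing $D_t^k(z-A_t)^{-1}$ as a sum of words $(z-A_t)^{-1}Q_{j_1}(z-A_t)^{-1}\cdots Q_{j_m}(z-A_t)^{-1}$ with $j_1+\cdots+j_m=k$, and observing that each word drops exactly $k$ rungs on the ladder $\mathcal{B}^r\supset\cdots\supset\mathcal{B}^0$, which is why the process stops at $k=r-1$ (the leftmost resolvent must still act on a space where a spectral hypothesis is in force, and $\mathcal{B}^0$ has none). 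That is the same architecture as \cite{GL06}.

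Two places where the sketch is more optimistic than the actual argument. First, you assert that the uniform resolvent bound $\|(z-A_t)^{-1}\|_{\mathcal{B}^k\to\mathcal{B}^k}\le C(\varrho,\delta)$ follows from (\ref{GL1})--(\ref{GL2}) plus Hennion's theorem; but (\ref{GL1})--(\ref{GL2}) only provide a Lasota--Yorke pair on $(\mathcal{B}^0,\mathcal{B}^1)$, and the pointwise spectral-distance hypothesis gives existence of $(z-A_t)^{-1}$ on each $\mathcal{B}^k$ but not by itself a bound that is uniform in $t$. In \cite{GL06} the uniform bound on every intermediate scale is where the careful spectral-perturbation work actually lives, and this is the step you cannot wave through with Hennion alone. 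Second, the "$o(|s-t|)$ remainder at each order," which you relegate to a "secondary technical point," is in fact where most of the labor in \cite{GL06} sits: one has to show that, when the Taylor expansion of $A_s$ around $A_t$ is substituted into nested resolvent identities, the error terms — each of which mixes resolvents on different scales — still converge in $L(\mathcal{B}^r,\mathcal{B}^0)$ uniformly in $z$. Your structural outline is correct, but a complete proof would need to supply those two estimates rather than treat them as afterthoughts.
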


\begin{remark}\label{rem:restuny}
It follows also from \cite[Theorem 8.1]{GL06} that if $T_{i|(t,z)}$ denotes the $i$-th derivative of  $I \times V_{\varrho,\delta} \ni (t,z) \mapsto (z - A_{t})^{-1} \in L(\mathcal{B}^{r} , \mathcal{B}^{0})$ in $(t , z)$ then fixed $t \in I$ we have that
$$
\frac{||T_{i|(t+h_{1},z+h_{2})} - T_{i|(t,z)} - T_{i+1|(t , z)}\cdot (h_{1} , h_{2})||}{||(h_{1}, h_{2}||}
$$
 converges to 0, when $(h_{1} , h_{2})$ converges to 0, uniformly in $z \in V_{\varrho,\delta}$.
\end{remark}

Using this theorem and Corollary~\ref{pri}, about uniform spectral gap property, we can prove the Theorem~\ref{thm:linres}.

Before the proof of the theorem we need a strong stability result, namely:
\label{pri}
\begin{proposition}
The mapping $\mathcal{D}^{r} \ni f  \longmapsto  h_{f}= h_{f, \phi} \in \mathcal{C}^{r-1}(M , \R)$ is continuous.
\end{proposition}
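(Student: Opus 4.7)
The plan is to approximate $h_{f,\phi}$ by iterates $\tilde{\mathcal{L}}_{f,\phi}^n(\mathbf{1})$ with a rate that is uniform for $f$ in a neighborhood of some fixed $f_0\in\mathcal{D}^r$, and then exploit continuity of these iterates in $f$ at the weaker $C^{r-1}$ level, where the operator $\mathcal{L}_{f,\phi}$ does depend continuously on $f$ (Proposition \ref{ledif}).

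Fix $f_0\in \mathcal{D}^r$. By Corollary \ref{pri} applied to $(f_0,\phi)$, there exist a neighborhood $\mathcal{F}^r$ of $f_0$ in $\mathcal{D}^r$, a constant $k\geq 0$ and $\tau\in(0,1)$ such that, for every $f\in\mathcal{F}^r$ and every $n\in\mathbb N$,
\[
\bigl\| \tilde{\mathcal{L}}_{f,\phi}^n(\mathbf{1}) - \nu_{f,\phi}(\mathbf{1})\cdot h_{f,\phi}\bigr\|_{r}\leq k\tau^n\|\mathbf{1}\|_r = k\tau^n,
\]
and since $\nu_{f,\phi}$ is a probability, $\nu_{f,\phi}(\mathbf{1})=1$. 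Consequently, uniformly in $f\in\mathcal{F}^r$,
\[
\|\tilde{\mathcal{L}}_{f,\phi}^n(\mathbf{1})-h_{f,\phi}\|_{r-1}\leq \|\tilde{\mathcal{L}}_{f,\phi}^n(\mathbf{1})-h_{f,\phi}\|_{r}\leq k\tau^n.
\]
Hence, given $\varepsilon>0$, I first select $n=n(\varepsilon)$ so that $2k\tau^n<\varepsilon/2$ and write
\[
\|h_{f,\phi}-h_{f_0,\phi}\|_{r-1}\leq 2k\tau^n + \bigl\|\tilde{\mathcal{L}}_{f,\phi}^n(\mathbf{1}) -\tilde{\mathcal{L}}_{f_0,\phi}^n(\mathbf{1})\bigr\|_{r-1}.
\]

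It remains to prove that for this fixed $n$, the map $f\mapsto \tilde{\mathcal{L}}_{f,\phi}^n(\mathbf{1})$ is continuous at $f_0$ in the $C^{r-1}$ norm. Since $\tilde{\mathcal{L}}_{f,\phi}^n=\lambda_{f,\phi}^{-n}\mathcal{L}_{f,\phi}^n$ and $\lambda_{f,\phi}$ depends continuously on $f$ by Proposition \ref{exp3}, it suffices to show that $f\mapsto \mathcal{L}_{f,\phi}^n(\mathbf{1})$ is continuous into $C^{r-1}(M,\mathbb R)$. For this, use the standard telescoping identity
\[
\mathcal{L}_{f,\phi}^n(\mathbf{1})-\mathcal{L}_{f_0,\phi}^n(\mathbf{1})=\sum_{i=0}^{n-1}\mathcal{L}_{f,\phi}^{n-i-1}\,\bigl(\mathcal{L}_{f,\phi}-\mathcal{L}_{f_0,\phi}\bigr)\,\mathcal{L}_{f_0,\phi}^i(\mathbf{1}).
\]
For each $i$, $\mathcal{L}_{f_0,\phi}^i(\mathbf{1})\in C^r(M,\mathbb R)$ has a finite, $f$-independent $C^r$ norm. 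By Proposition \ref{ledif} with $k=1$, $\|\mathcal{L}_{f,\phi}-\mathcal{L}_{f_0,\phi}\|_{L(C^r,C^{r-1})}\to 0$ as $f\to f_0$. Finally, $\mathcal{L}_{f,\phi}$ acts continuously on $C^{r-1}(M,\mathbb R)$, with operator norm bounded uniformly on $\mathcal{F}^r$ (possibly after shrinking this neighborhood), so each factor $\mathcal{L}_{f,\phi}^{n-i-1}$ has a $C^{r-1}$ operator norm bounded by a constant depending only on $n$ and $\mathcal{F}^r$. Summing over $i=0,\dots,n-1$ yields $\|\mathcal{L}_{f,\phi}^n(\mathbf{1})-\mathcal{L}_{f_0,\phi}^n(\mathbf{1})\|_{r-1}\to 0$ as $f\to f_0$, and combining with the continuity of $\lambda_{f,\phi}^{-n}$ gives the continuity of $f\mapsto \tilde{\mathcal{L}}_{f,\phi}^n(\mathbf{1})$ in $C^{r-1}$. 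Choosing $f$ close enough to $f_0$ to make this middle term smaller than $\varepsilon/2$ completes the proof.

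The main obstacle is the regularity loss: one cannot hope that $f\mapsto \mathcal{L}_{f,\phi}$ is continuous into $L(C^r,C^r)$ (cf.\ the reference to \cite[Example 4.14]{CV13}), which is precisely why the conclusion is stated in the $C^{r-1}$ topology rather than $C^r$. The telescoping argument resolves this by paying the regularity cost only once, on the single factor $\mathcal{L}_{f,\phi}-\mathcal{L}_{f_0,\phi}$, while all other factors are handled in the $C^{r-1}$ topology where continuous dependence is available.
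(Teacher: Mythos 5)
Your proposal is correct and follows essentially the same route as the paper: approximate $h_{f,\phi}$ uniformly (over a neighborhood of $f_0$) by $\tilde{\mathcal{L}}^{n}_{f,\phi}(\mathbf{1})$ via Corollary~\ref{pri}, then use Proposition~\ref{ledif} and continuity of $\lambda_{f,\phi}$ to control $\|\tilde{\mathcal{L}}^{n}_{f,\phi}(\mathbf{1})-\tilde{\mathcal{L}}^{n}_{f_0,\phi}(\mathbf{1})\|_{r-1}$, and close with a triangle inequality. The only difference is that you make explicit, via the telescoping identity and a uniform $L(C^{r-1},C^{r-1})$ bound on $\mathcal{L}_{f,\phi}$, the step where the paper simply invokes Proposition~\ref{ledif} for the continuity of the $n$-fold iterate; you also localize the uniform-gap estimate to a neighborhood of $f_0$ rather than asserting it for all of $\mathcal{D}^r$, which is the more careful reading of Corollary~\ref{pri}.
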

\begin{proof}
Fix $f_{0} \in \mathcal{D}^{r}$. By Theorem~\ref{pri}, we have
$$
||\tilde{\mathcal{L}}^{n}_{f,\phi\label{pri} }1 - h_{f ,\phi}||_{0} \leq  k \tau^{n}, \forall f \in \mathcal{D}^{r}
.$$
Take $\epsilon > 0$, then there exists $n_{0} \in \N$ such that
$$
||\tilde{\mathcal{L}}^{n_{0}}_{f,\phi }(1) - h_{f ,\phi}|| < \frac{\epsilon}{3}, \forall f \in \mathcal{D}^{r}.
$$
 By proposition~\ref{exp3}, $\mathcal{D}^{r} \ni f \mapsto \lambda_{f,\phi}$ is continuous. Using the Proposition~\ref{ledif}, there exists a neighborhood $V$ of $f_{0}$ so that: if $f \in V$ we have that
$$
||\tilde{\mathcal{L}}^{n_{0}}_{f,\phi}(1) - \tilde{\mathcal{L}}^{n_{0}}_{f_{0},\phi}(1)||_{r-1} < \frac{\epsilon}{3}.
$$
Therefore, for $f \in V$:
$$
||h_{f,\phi } - h_{f_{0} ,\phi}||_{r-1} \leq ||h_{f ,\phi} -  \tilde{\mathcal{L}}^{n_{0}}_{f ,\phi}(1)||_{r-1} \;+ \;||\tilde{\mathcal{L}}^{n_{0}}_{f,\phi}(1) -
\tilde{\mathcal{L}}^{n_{0}}_{f_{0},\phi}(1)||_{r-1} + ||\tilde{\mathcal{L}}^{n_{0}}_{f_{0} ,\phi}(1) - h_{f_{0} ,\phi}||_{r-1} < \epsilon.
$$
This prove that $\mathcal{D}^{r} \ni f  \longmapsto  h_{f} \in \mathcal{C}^{r-1}(M , \R)$ is continuous in $f_{0}$.
\end{proof}

\begin{proof}[Proof of the Theorem~\ref{thm:linres}]

Fix $f_{0} \in \mathcal{D}^{r}$. Note initially that $\mathcal{L}_{f,\phi|C^{r-1}}$ and $\mathcal{L}_{f,\phi|C^{r}}$ have the same spectral radius, which is equal to  $e^{P_{\topp}(f,\phi)}$. By the previous proposition there exists a neighborhood $U$ of $f_{0}$ so that $\sup_{f \in U}||\mathcal{L}_{f,\phi}1||_{0} < +\infty$. By statistical stability (Proposition~\ref{exp3}) $\lambda_{f,\phi}$ is continuous. So, we can assume without loss of generality  that $\sup_{f \in U}\lambda_{f,\phi} < +\infty$. Furthermore,
there exists a closed $C^{1}$-curve $\gamma$ such that the bounded connected component determined by $\gamma$
contains the spectral radius of $\mathcal{L}_{f,\phi|C^{r}}$  for any $f \in U$, and the unbounded connected component contains the remainder of the spectrum  of $f$, for any $f \in U$.
Let $P_{f}$ be the spectral projection of $\mathcal{L}_{f,\phi|C^{r}}$ associated to its spectral radius. We already know that
$P_{f} = \frac{1}{2\pi i}\int_{\gamma}(zI - \mathcal{L}_{f,\phi})^{-1}dz$ for all $f \in U$. We will use the previous theorem to prove that
$U \times \gamma \ni (f, z) \mapsto (zI - \mathcal{L}_{f,\phi})^{-1} \in {L(C^{r} , C^{0})}$ is $C^{r-1}$ and that the error of the Taylor polynomial approximation of  order $r-1$  goes to 0 uniformly with respect to $z \in \gamma$, when we fix $f \in U$. Therefore, It is enough to choose correctly the elements contained in Theorem~\ref{dif1}. In the previous theorem take {$\mathcal{B}^{0} = C^{0}(M , \R)$, $\mathcal{B}^{1} = C^{1}(M , \R)$, $\mathcal{B}^{2} = C^{2}(M , \R), \ldots , \mathcal{B}^{r} = C^{r}(M , \R)$}, $I = U$, $t= f$, and so $A_{t} = A_{f}= \mathcal{L}_{f,\phi}$. By Proposition~\ref{ledif}, in order
 to apply Theorem~\ref{dif1}, It is enough to observe that the hypotheses (\ref{GL1}) and (\ref{GL2}) hold. By Theorem~\ref{pri}
{
$$
||\tilde{\mathcal{L}}^n_{f,\phi}g - \int g d\nu_{f,\phi} \cdot h_{f,\phi}||_{i} \leq k\tau^{n}, \forall g \in C^{i}(M, \R), \forall f \in U \text{ and } i = 1, \ldots, r.
$$
}
Take $M := \sup_{f \in U}||\mathcal{L}_{f,\phi}1||_{0} < +\infty,$ $\alpha := \sup_{f \in U}\lambda_{f,\phi} \cdot \tau$ and
$C := \max\{k , \sup_{f \in U}||h_{f,\phi}||_{r}\}$. Thereby,
$$
||\mathcal{L}_{f,\phi}^{n}g||_{0} \leq M^{n}||g||_{0}, \forall g \in C^{0}(M,\R) \text{ and } \forall f \in U,
$$
and
{
$$
\begin{array}{ccc}
||\mathcal{L}_{f,\phi}^{n}g||_{1} & \leq & k(\lambda_{f,\phi}\tau)^{n} + \lambda_{f,\phi}^{n}||g||_{1} \cdot||h_{f,\phi}||_{1} \\
  &\leq &
k(\sup_{f \in U}\lambda_{f,\phi} \cdot \tau)^{n} + \sup_{f \in U}||h_{f,\phi}||_{1}\cdot\sup_{f \in U}||\mathcal{L}_{f,\phi}1||_{0}^{n} \cdot ||g||_{1}.
\end{array}
$$
}
Thus, we can apply the previous theorem and  Remark~\ref{rem:restuny} to conclude that {$U \times \gamma \ni (f, z) \mapsto (zI - \mathcal{L}_{f,\phi})^{-1} \in L(C^{r} , C^{0})$ is $C^{r- 1}$} and
that the errors from the  approximation by the Taylor Polynomial 
of degree $r-1$ go to zero uniformly with respect to $z \in \gamma$, when we fix $f \in U$.
So, {$U \ni f \mapsto P_{f} \in L(C^{r} , C^{0})$} is $C^{r-1}$. As $\lambda_{f,\phi} = \frac{\mathcal{L}_{f,\phi}(P_{f}1)}{P_{f}1}, h_{f,\phi} = P_{f}1$ and $\nu_{f,\phi}(g) = \frac{P_{f}g}{P_{f}1}, \forall g \in C^{r}(M , \R),$ we prove the first part of the theorem.

Now we prove the second part of the theorem. We want to calculate an asymptotic formula for $D_{f,\phi}P_{\topp}(f,\phi)$. As it is already known that $\partial_{\phi}P_{\topp}(f,\phi) = \mu_{f,\phi}$ (see for example \cite{PU10}), It is enough to calculate $\partial_{f}P_{\topp}(f,\phi).$ The proof is completely analogous to the respective results in \cite{BCV16}
taking into account that the spectral gap holds for all smooth potential. Thereby we give a proof sketch.
We recall two key results of \cite{BCV16} (see Propositions~4.10 and ~4.11 therein):

\begin{lemma}{(Local Differentiability of inverse branches)}
Let $r \geq 1$, $0 \leq k \leq r$ and $f:M \to M$ be a $C^r$-local diffeomorphism on a compact connected manifold $M$.
Let $B= B(x, \delta)\subset M$ some ball such that the inverse branches $f_1, \dots, f_s : B \to M$ are
well defined diffeomorphisms onto their images. Then $C^r(M, M) \ni f \mapsto (f_1, \dots, f_s) \in C^{r-k}$ is a $C^{k}$ map.
\end{lemma}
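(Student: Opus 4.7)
The plan is to apply the Banach-space Implicit Function Theorem to the equation $g \circ h = \iota$, where $\iota$ denotes the inclusion of $B$ into $M$; this implicitly defines $h = h_j(g)$ as the $j$-th local inverse branch of a perturbation $g$ of $f$. Since the statement is local in the target and the preimages $z_j := f_j(x)$ are finite in number with pairwise disjoint small neighborhoods, I would first pick local charts around each $z_j$ and around $x$, and use the embedding $M \hookrightarrow \R^m$ to view $C^r(M, M) \subset C^r(M, \R^m)$, so that each inverse branch can be treated separately as a Euclidean-space problem; the local-diffeomorphism hypothesis, being open in $C^1$, guarantees that the branches stay distinct on a uniform $C^r$-neighborhood of $f$.

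Fix one branch $j$, and consider $\Phi(g, h) := g \circ h - \iota$ as a map from a $C^r$-neighborhood $\cU$ of $f$ times a $C^{r-k}$-neighborhood $\mathcal{V}$ of $f_j$ into $C^{r-k}(B, \R^m)$. By the $\omega$-lemma for composition --- specifically, the fact that $(g, h) \mapsto g \circ h$ is $C^k$ from $C^r \times C^{r-k}$ into $C^{r-k}$ because its $j$-th differential ($j \le k$) involves only derivatives $D^j g$ of order $\le k$, which remain $C^{r-k}$-regular when composed with $h$ (see \cite{Fr79}) --- the map $\Phi$ is $C^k$. At the base point $\Phi(f, f_j) = 0$, and the partial derivative
\begin{equation*}
\partial_h \Phi(f, f_j) \cdot H = (Df \circ f_j) \cdot H, \qquad H \in C^{r-k}(B, \R^m),
\end{equation*}
is pointwise multiplication by the invertible $C^{r-1}$ matrix field $Df \circ f_j$, hence a bounded linear isomorphism of $C^{r-k}(B, \R^m)$. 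The Implicit Function Theorem then produces a unique $C^k$ map $g \mapsto h_j(g)$ from $\cU$ into $C^{r-k}(B, \R^m)$ with $h_j(f) = f_j$; taking the product over $j = 1, \ldots, s$ yields the desired $C^k$ map $f \mapsto (f_1, \ldots, f_s)$.

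The main obstacle is the unavoidable loss of $k$ derivatives between domain and target: composition of maps is not Fr\'echet-smooth between equal-regularity spaces, so one must trade differentiability in the parameter $g$ for smoothness of the resulting inverse branches. The argument above makes this trade explicit and also covers the limit case $k = 0$ trivially, by continuity: $C^0$-dependence of $f_j$ in the $C^r$-topology reduces, via local uniqueness of the branches, to the standard continuity of local inversion. A minor bookkeeping point is to choose $\cU$ so that all $s = \deg(f)$ branches of every $g \in \cU$ are simultaneously well-defined on $B$ with pairwise disjoint images; this follows from compactness of $M$ together with openness of the local-diffeomorphism condition.
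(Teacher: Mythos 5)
Your proof is correct. Note first that the paper under review does not actually prove this lemma: it is stated as a recalled result, citing Propositions~4.10 and~4.11 of \cite{BCV16}. So there is no in-document argument to compare against; what can be assessed is the internal soundness of your proposal.

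Your Banach-space Implicit Function Theorem argument, built on the $\omega$-lemma, is the natural and essentially inevitable way to prove this statement, and you have set it up correctly. The key observations are all in place: the joint composition map $(g,h)\mapsto g\circ h$ is $C^k$ from $C^r\times C^{r-k}$ into $C^{r-k}$ (set $s:=r-k$; this is the standard statement $C^{s+k}\times C^s\to C^s$), so $\Phi(g,h)=g\circ h-\iota$ is $C^k$; the fibered derivative $\partial_h\Phi(f,f_j)$ is multiplication by the invertible $C^{r-1}$ matrix field $Df\circ f_j$, whose inverse $(Df\circ f_j)^{-1}$ is again $C^{r-1}$, hence this multiplication operator is a bounded isomorphism of $C^{r-k}(B,\R^m)$ precisely because $r-1\ge r-k$ when $k\ge 1$; and by IFT-uniqueness the implicit solution is the actual inverse branch of $g$. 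You correctly isolate $k=0$ as the degenerate case and dispose of it by the classical $C^r$-stability of local inversion. Two bookkeeping remarks you gloss over but which are easily filled in: the embedded formulation requires extending $g$ (and hence $\Phi$) to a tubular neighborhood of $M$ in $\R^m$ so that $g\circ h$ makes sense for $h$ with values slightly off $M$, which is exactly what the Franks setup is designed for; and strictly speaking the branches of a perturbed $g$ are defined on a slightly shrunk ball, so one either chooses $B$ with room to spare or shrinks $B$ a posteriori. Neither affects the argument, and the conclusion follows.
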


We denote the derivative of $f \mapsto f_{j}$ by $T_{j|f}$.

\begin{lemma}
Let  $f \in {Diff}_{loc}^{r}$ and $\phi \in C^{r}(M , \R)$ be. Given $H \in \Gamma^{r}_{f},
g_{1} , g_{2} \in C^{r}(M, \R)$ and $t \in \R$ it holds
\begin{itemize}
\item[i)] $D_{f}(\mathcal{L}_{f,\phi}^{n}(g))_{|f_{0}} \cdot H
	= \sum_{i = 1}^{n}\mathcal{L}_{f_{0},\phi}^{i - 1} (D_{f}\mathcal{L}_{f,\phi}
	(\mathcal{L}_{f_{0},\phi}^{ n - i}(g))_{|f_{0}} \cdot H)$;
\item[ii)] there exists $c_{f,\phi}>0$ so that
	$\|D_{f}\mathcal{L}_{f,\phi}(g)_{|f_{0}} \cdot H\|_{0} \leq c_{f,\phi} \|g\|_{1} \; \|H\|_{1}$ and $c_{f,\phi}$ can be take uniform for dynamics close enough;
\item[iii)] $D_{f}\mathcal{L}_{f,\phi}^{n}(g_{1} + tg_{2})_{|f_{0}} \cdot H =   D_{f}\mathcal{L}_{f,\phi}^{n}(g_{1})_{|f_{0}} \cdot H + tD_{f}\mathcal{L}_{f,\phi}^{n}(g_{2})_{|f_{0}} \cdot H$.
\end{itemize}
\end{lemma}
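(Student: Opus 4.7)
The plan is to exploit the explicit representation
$$\mathcal{L}_{f,\phi}g(x) = \sum_{j=1}^{\deg(f)} e^{\phi(f_j(x))}\, g(f_j(x))$$
combined with the preceding Local Differentiability of Inverse Branches lemma, which identifies $T_{j|f}\cdot H$ as the derivative at $f$ of the map $f\mapsto f_j$ applied to $H\in\Gamma^r_f$. With these in hand, item (iii) is immediate: since $g\mapsto \mathcal{L}_{f,\phi}^n(g)$ is linear for every fixed $f$, the Fréchet difference quotient that defines its derivative in $f$ is linear in $g$, and hence so is the derivative itself.

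For item (ii), I would differentiate the explicit formula directly. Writing $f_{0,j}$ for the inverse branches of $f_0$, the product and chain rules yield
$$D_f\mathcal{L}_{f,\phi}(g)|_{f_0}\cdot H\,(x) = \sum_{j=1}^{\deg(f_0)} e^{\phi(f_{0,j}x)}\Big[g(f_{0,j}x)\,D\phi_{|f_{0,j}x}\cdot(T_{j|f_0}\cdot H)(x) + Dg_{|f_{0,j}x}\cdot(T_{j|f_0}\cdot H)(x)\Big].$$
Differentiating the relation $f\circ f_j=\mathrm{Id}$ gives $T_{j|f_0}\cdot H(x) = -[Df_0(f_{0,j}x)]^{-1}\cdot H(f_{0,j}x)$, so $\|T_{j|f_0}\cdot H\|_{0}\le\sigma^{-1}\|H\|_{0}\le\sigma^{-1}\|H\|_{1}$. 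Taking $C^0$-norms then gives the bound with $c_{f,\phi}$ depending only on $\|\phi\|_{1}$, $\deg(f)$ and $\sigma$; the local constancy of $\deg(f)$ and the continuity of $f\mapsto\sup_j\|T_{j|f}\|_{C^0}$ built into the inverse branches lemma supply the uniformity on a $C^r$-neighborhood of $f$.

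Finally, for item (i) I would proceed by induction on $n$, starting from $\mathcal{L}_{f,\phi}^n(g) = \mathcal{L}_{f,\phi}\big(\mathcal{L}_{f,\phi}^{n-1}(g)\big)$. Applying the product rule at $f_0$ and using that $\mathcal{L}_{f_0,\phi}$, being linear, coincides with its own derivative in its input, one obtains
$$D_f\mathcal{L}_{f,\phi}^n(g)|_{f_0}\cdot H = D_f\mathcal{L}_{f,\phi}\big(\mathcal{L}_{f_0,\phi}^{n-1}g\big)\big|_{f_0}\cdot H + \mathcal{L}_{f_0,\phi}\Big(D_f\mathcal{L}_{f,\phi}^{n-1}(g)|_{f_0}\cdot H\Big).$$
Substituting the inductive hypothesis into the second summand and reindexing gives exactly the telescoping formula asserted in (i). The main technical point to check carefully is the functional setting for the product rule: one needs $D_f\mathcal{L}_{f,\phi}|_{f_0}\cdot H$ to land in a space on which the iterates $\mathcal{L}_{f_0,\phi}^{i-1}$ still act continuously with controlled norm. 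This is supplied by (ii), which places the derivative in $C^0$ with norm controlled by $\|H\|_1$, combined with the uniform bounds on the iterates of $\mathcal{L}_{f_0,\phi}$ furnished by Corollary~\ref{pri}.
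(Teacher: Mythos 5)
The paper does not actually prove this lemma; it is cited from \cite{BCV16} (Propositions 4.10 and 4.11 there), so there is no in-paper argument to compare against. That said, your proof is correct and is the natural one. Item~(iii) does follow immediately from the fact that $\mathcal{L}_{f,\phi}^n$ is linear in $g$ for each fixed $f$, so the Fr\'echet derivative in $f$ of the bilinear-type assignment $(f,g)\mapsto\mathcal{L}_{f,\phi}^n g$ at a fixed $g$ is linear in $g$. For item~(ii), differentiating the explicit sum-over-inverse-branches formula and using $T_{j|f_0}\cdot H(x)=-[Df_0(f_{0,j}x)]^{-1}H(f_{0,j}x)$ (from differentiating $f\circ f_j=\mathrm{Id}$) gives the pointwise bound $c_{f,\phi}\|g\|_1\|H\|_0\le c_{f,\phi}\|g\|_1\|H\|_1$ with $c_{f,\phi}$ built from $\deg(f)$, $e^{\|\phi\|_0}$, $\|D\phi\|_0$, and $\sigma^{-1}$, all of which are locally uniform in $f$. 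For item~(i), writing $\mathcal{L}_{f,\phi}^n(g)=\Psi(f,f)$ with $\Psi(f_1,f_2)=\mathcal{L}_{f_1,\phi}(\mathcal{L}_{f_2,\phi}^{n-1}g)$, the partial derivative in $f_2$ passes through the fixed linear operator $\mathcal{L}_{f_0,\phi}$, giving the two-term recursion $D_f\mathcal{L}_{f,\phi}^n(g)|_{f_0}\cdot H = D_f\mathcal{L}_{f,\phi}(\mathcal{L}_{f_0,\phi}^{n-1}g)|_{f_0}\cdot H + \mathcal{L}_{f_0,\phi}\bigl(D_f\mathcal{L}_{f,\phi}^{n-1}(g)|_{f_0}\cdot H\bigr)$, and the inductive hypothesis plus reindexing telescopes to the claimed sum. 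One minor remark: for the identity in (i) the full uniform spectral gap of Corollary~\ref{pri} is not needed, only that $\mathcal{L}_{f_0,\phi}$ acts boundedly on $C^0$; the spectral-gap estimates only enter later when one passes $n\to\infty$ in the pressure computation.
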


We remark that $A \odot H$ will denote an operator $A$ acting on vector $H$. Fix $f_{0} \in \mathcal{D}^{r}$. By uniform spectral gap (Corollary~\ref{pri}) and the continuity of $\mathcal{D}^{r} \ni f \mapsto h$, there exists $K >0$ and neighborhood $W$ of $f_{0}$ so that $K^{-1} \leq \int \tilde{\mathcal{L}}^{n}_{f}1 d\nu_{f} \leq K$ for all $f \in W$ and $n \in \N$. In particular, $ \lim_{n\to\infty} \frac{1}{n}\log\int\mathcal{L}_{f}^{n} 1 \;
d\nu_{f_{0}}    = \log \lambda_{f} $ is a uniform limit with respect to $f\in W$. Consider the sequence of maps
$P_n: W\to \R$ given by
$
P_{n}(f)  = \frac{1}{n}\log\int\mathcal{L}_{f}^{n} 1\, d\nu_{f_{0}},
$
that converges uniformly to $\log \la_{f}$. We will see that the $D_{f}P_{n}$ converges uniformly. By chain rule:
 $$
  D_fP_{n|\hat{f}} \odot H    = \frac{\int D_{f }\mathcal{L}^{n}_{f }(1)_{|\hat{f} } \odot H
d\nu_{f_{0}} }{ n \cdot \int\mathcal{L}^{n}_{\hat{f}}(1)d\nu_{f_{0}}}    = \frac{ \int \sum_{i = 1}^{n}\mathcal{L}_{\hat{f}
}^{i - 1} (D_{f}\mathcal{L}_{f }(\mathcal{L}_{\hat{f}}^{ n - i}(1))_{|\hat{f} } \odot H) d\nu_{f_{0} } }{ n \cdot
\int\mathcal{L}^{n}_{\hat{f} }(1)d\nu_{f_{0}}}
  =
   $$
   $$
   \frac{B_{n}(\hat{f} , \hat{\phi}) \odot H}{\lambda_{\hat{f} }\int\tilde{\mathcal{L}}^{n}_{\hat{f}
}(1)d\nu_{f_{0} }} + \frac{C_{n}(\hat{f} , \hat{\phi}) \odot H}{\lambda_{\hat{f} }\int\tilde{\mathcal{L}}^{n}_{\hat{f}
}(1)d\nu_{f_{0} }},
 $$
 where $$
 B_{n}(\hat{f} ) \odot H = \frac{1}{n}\int \sum_{i =
1}^{n}\tilde{\mathcal{L}}_{\hat{f} }^{i - 1} (\sum_{j = 1}^{\deg(\hat{f})} e^{{\phi}(\hat{f}_{j}(\cdot))} \cdot
D\tilde{\mathcal{L}}_{\hat{f}}^{ n - i}(1)_{|\hat{f}_{j}(\cdot)} \odot [(T_{j|\hat{f}} \odot H)(\cdot)]) \; d\nu_{f_{0} } \text{ and }
$$
$$
C_{n}(\hat{f} ) \odot H     \! = \!\frac{1}{n}\int \sum_{i = 1}^{n}     \tilde{\mathcal{L}}_{\hat{f} }^{i - 1}  \Big(\!     \sum_{j
= 1}^{\deg(\hat{f})} e^{{\phi}(\hat{f}_{j}(\cdot))} \cdot   \tilde{\mathcal{L}}_{\hat{f} }^{ n - i}(1)(\hat{f}_{j}(\cdot)) \cdot
D{\phi}_{|\hat{f}_{j}(\cdot)} \odot [(T_{j|\hat{f}} \odot H)(\hat{f}_{j}(\cdot))] \Big)  d\nu_{f_{0} }.
$$
Analogously to the proof contained in \cite[Lemma~4.13]{BCV16}, taking into account the uniform spectral gap, we obtain:\\

\emph{Claim 1:} $B_{n}(\hat{f} )\odot H$ is uniformly convergent on $\hat{f} $ and $H \in \Gamma^{r}_{\hat{f}}$, $||H||_{r}
 \leq 1$, to the expression $\int \sum_{j = 1}^{\deg(\hat{f})} e^{{\phi}(\hat{f}_{j}(\cdot))} \cdot Dh_{\hat{f} ,
 \hat{\phi}|\hat{f}_{j}(\cdot)} \odot [(T_{j|\hat{f}} \odot H_{1})(\cdot)] d\nu_{\hat{f} } \cdot \int h_{\hat{f} } d\nu_{f_{0} }$.\\

\emph{Claim 2:} $C_{n}(\hat{f}) \cdot H$ is uniformly convergent on $\hat{f} $ and $H \in \Gamma^{r}_{\hat{f}}$,
 $||H||_{r}  \leq 1$, to the expression
$$
 \int \sum_{j = 1}^{\deg(\hat{f})} e^{{\phi}(\hat{f}_{j}(\cdot))} \cdot
h_{\hat{f} , {\phi}}(\hat{f}_{j}(\cdot)) \cdot D{\phi}_{|\hat{f}_{j}(\cdot)} \odot [(T_{j|\hat{f}} \odot H)(\hat{f}_{j}(\cdot))] \;
d\nu_{\hat{f} } \cdot \int h_{\hat{f} , {\phi}} d\nu_{f_{0}}.
$$ \vspace{.2cm}

As
$
D_{f}P_{n|\hat{f}} \odot (H) = \frac{B_{n}(\hat{f} , \hat{\phi}) \odot H}{\lambda_{\hat{f} }\int\tilde{\mathcal{L}}^{n}_{\hat{f}
}(1)d\nu_{f_{0} }} + \frac{C_{n}(\hat{f} , \hat{\phi}) \odot H}{\lambda_{\hat{f} }\int\tilde{\mathcal{L}}^{n}_{\hat{f}
}(1)d\nu_{f_{0} }},
$
 using the two claims and that $\int\tilde{\mathcal{L}}^{n}_{\hat{f} }(1)d\nu_{f_{0} }$ converges to $\int h_{\hat{f} ,
{\phi}}d\nu_{f_{0} }$ uniformly with respect to $\hat{f}$ we obtain that $D_{f}P_{n|\hat{f} } \odot H$ is uniformly convergent on
$\hat{f} $ and $H \in \Gamma^{r}_{\hat{f}}$, such that $||H||_{r} = 1$, to the sum
\begin{align*} &
\lambda_{\hat{f} ,{\phi}}^{-1} \int \sum_{j = 1}^{\deg(\hat{f})} e^{{\phi}(\hat{f}_{j}(\cdot))} \cdot Dh_{\hat{f}
{\phi}|\hat{f}_{j}(\cdot)} \odot [(T_{j|\hat{f}} \odot H_{1})(\cdot)] \; d\nu_{\hat{f} } \\  & + \lambda_{\hat{f} ,{\phi}}^{-1}
\int \sum_{j = 1}^{\deg(\hat{f})} e^{{\phi}(\hat{f}_{j}(\cdot))} \cdot h_{\hat{f} , {\phi}}(\hat{f}_{j}(\cdot)) \cdot
D{\phi}_{|\hat{f}_{j}(\cdot)} \odot [(T_{j|\hat{f}} \odot H)(\hat{f}_{j}(\cdot))] \; d\nu_{\hat{f}}. \end{align*}
This completes the proof of the theorem.
\end{proof}


\subsection{Large deviations stability \label{subseclarge}}

This section is devoted to the  proof of Theorem~\ref{thm:differentiability.LDP1}.

So far, we know that if $\psi$ is cohomologous to a constant then $\R \ni t \mapsto \cE_{f,\phi,\psi}(t)$ is affine; however, if $\psi$ not is cohomologous to a constant then
$\R \ni t \mapsto \cE_{f,\phi,\psi}(t)$ is real analytic and strictly convex. As a consequence of Theorem~\ref{thm:linres} we have:

\begin{proposition}\label{prop:free.energy}
Let $r \geq 2$. Then the following maps are $C^{r-1}$:
\begin{itemize}
\item[i.] $\R \times \mathcal{D}^{r} \times C^{r}(M, \R) \times C^{r}(M , \R) \ni (t,f,\phi,\psi) \mapsto \cE_{f,\phi,\psi}(t)$, \item[ii.] $\R \times \mathcal{D}^{r} \times C^{r}(M, \R) \times C^{r}(M , \R)  \ni (t,f,\phi,\psi) \mapsto \cE_{f,\phi,\psi}'(t)$.
\end{itemize}
\end{proposition}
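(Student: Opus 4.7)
The plan is to reduce both items to a routine application of Theorem~\ref{thm:linres}, composed with the smooth affine map $(t,\phi,\psi) \mapsto \phi + t\psi$, plus the fact that the canonical evaluation pairing between $[C^r(M,\R)]^\ast$ and $C^r(M,\R)$ is continuous bilinear, hence $C^\infty$.

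For item i, I would first observe that the map
\[
\Phi : \R \times C^{r}(M,\R) \times C^{r}(M,\R) \longrightarrow C^{r}(M,\R),
\qquad \Phi(t,\phi,\psi) = \phi + t\psi,
\]
is a continuous bilinear-plus-linear map, and therefore $C^{\infty}$. By Theorem~\ref{thm:linres}(i), the topological pressure $P_{\topp}:\mathcal{D}^{r}\times C^{r}(M,\R) \to \R$ is $C^{r-1}$. The chain rule then yields that $(t,f,\phi,\psi) \mapsto P_{\topp}\bigl(f,\Phi(t,\phi,\psi)\bigr) = P_{\topp}(f,\phi + t\psi)$ is $C^{r-1}$, and subtracting the $C^{r-1}$ map $(f,\phi)\mapsto P_{\topp}(f,\phi)$ gives item i.

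For item ii, the key identity is
\[
\cE'_{f,\phi,\psi}(t) = \int \psi \, d\mu_{f,\phi + t\psi},
\]
which one obtains by differentiating $t \mapsto P_{\topp}(f,\phi+t\psi)$ along the smooth path in $C^{r}(M,\R)$ and using the well-known formula $\partial_{\phi} P_{\topp}(f,\phi)\cdot \psi = \int \psi\, d\mu_{f,\phi}$ (recalled in the proof of Theorem~\ref{thm:linres}). Once this identity is in hand, I would invoke Theorem~\ref{thm:linres}(iv) to conclude that $(f,\phi)\mapsto \mu_{f,\phi} \in [C^{r}(M,\R)]^{\ast}$ is $C^{r-1}$, and then compose with the $C^{\infty}$ evaluation pairing
\[
\text{ev} : [C^{r}(M,\R)]^{\ast} \times C^{r}(M,\R) \longrightarrow \R,
\qquad (\nu,g)\mapsto \nu(g),
\]
applied to $\bigl(\mu_{f,\phi + t\psi},\,\psi\bigr)$. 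Since $(t,f,\phi,\psi) \mapsto \mu_{f,\Phi(t,\phi,\psi)}$ is $C^{r-1}$ as a composition of $C^{r-1}$ and $C^{\infty}$ maps, and since $\psi$ enters $\text{ev}$ linearly, the resulting map $(t,f,\phi,\psi)\mapsto \cE'_{f,\phi,\psi}(t)$ is $C^{r-1}$.

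The only nontrivial point is justifying the derivative formula $\cE'_{f,\phi,\psi}(t) = \int \psi\, d\mu_{f,\phi+t\psi}$ in the strong Fr\'echet sense: I would do this by noting that item i already gives $C^{r-1}$ (hence $C^{1}$) differentiability of $(t,\phi)\mapsto P_{\topp}(f,\phi + t\psi)$, so by the chain rule the $t$-derivative equals $D_{\phi}P_{\topp}(f,\phi+t\psi)\cdot \psi$, and Theorem~\ref{thm:linres} together with its $\phi$-partial derivative formula identifies this with integration against $\mu_{f,\phi+t\psi}$. I do not anticipate a serious obstacle here: the entire argument is a clean composition of regularity statements already proved, with the bilinear evaluation pairing absorbing the $\psi$-dependence at no regularity cost.
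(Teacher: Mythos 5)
Your proof is correct, and it fills in exactly the details the paper leaves implicit (the paper simply asserts the proposition as a consequence of Theorem~\ref{thm:linres} with no further argument). You correctly identify the one genuine subtlety: item ii cannot be deduced merely by differentiating item i, since that would lose a derivative and only give $C^{r-2}$; instead one must use the explicit identity $\cE'_{f,\phi,\psi}(t)=\int\psi\,d\mu_{f,\phi+t\psi}$, which exhibits $\cE'$ directly as the composition of the $C^{r-1}$ map $(f,\phi)\mapsto\mu_{f,\phi}\in[C^r]^{\ast}$ from Theorem~\ref{thm:linres}(iv) with the smooth affine substitution $\phi+t\psi$ and the $C^\infty$ bilinear evaluation pairing. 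This is precisely the argument the authors have in mind, and it is carried out cleanly.
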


Assume that $\psi$ is not  cohomologous to a constant and that $\int \psi \, d\mu_{f,\phi}=0$.
Using that $\R \ni t\to \cE_{f,\phi,\psi}(t)$ is strictly convex it is well defined its Legendre transform
$I_{f,\phi,\psi}$ by
\begin{equation*}
I_{f,\phi,\psi}(s)
    := \sup_{t \in \R} \; \big\{ st-\cE_{f,\phi,\psi}(t) \big\}.
\end{equation*}

This function is non-negative and strictly convex since $\cE_{f,\phi,\psi}$  is also strictly convex and
$I_{f,\phi,\psi}(s) = 0$ if and only if $s = \int \psi d\mu_{f,\phi}$. We can define the Legendre transform for a $\psi$ which is not cohomologous to a constant by
 $I_{f,\phi,\psi}(t) := I_{f,\phi, \psi - \int\psi d\mu_{f,\phi}}(t - \int\psi d\mu_{f,\phi}).$
Using the differentiability of the free energy function it is not hard to check the variational property
$$
I_{f,\phi,\psi} (\cE'_{f,\phi,\psi}(t) )
    = t \cE'_{f,\phi,\psi}(t) - \cE_{f,\phi,\psi}(t).
$$
Furthermore, $s \mapsto I_{f,\phi,\psi}(s)$ is strictly convex.

Using the differentiability of the free energy function, one can get results from large
deviations by Gartner-Ellis's theorem, with rate function given by the Legendre transform (see e.g. \cite{CRL98}). Our contribution  here is to prove the regularity of the large deviations rate function with respect to the expanding dynamics.

\begin{proof}[Proof of Theorem~\ref{thm:differentiability.LDP1}]
Let $\psi_{v_{\ast}}$ be an observable which is  not cohomologous to a constant. As  $\psi_{v}$ is cohomologous to a constant only if $\cE_{f,\phi,\psi}(t)$ is affine and by Theorem~\ref{thm:linres} there exists a neighborhood $U$ of $v_{\ast}$ so that $\psi_{v}$ is not cohomologous to a constant for all $v \in U$. Thus, we have proved the first part of the corollary.

Now we prove the second part of the corollary. Using the variational property of the Legendre transform and that
$\cE''_{f_v,\phi_v,\psi_v}(t) > 0$, we have that for all
$$s \in (\inf_{t \in \R}\int \psi_{v} d\mu_{\phi_{v} + t\psi_{v}} \; , \sup_{t \in \R}\int \psi_{v} d\mu_{\phi_{v} + t\psi_{v}})$$ there exists a unique $t=t(s,v)$
such that $s=\cE'_{f_v,\phi_v,\psi_v}(t) $ and
\begin{equation}\label{eq.var.rate}
I_{f_v,\phi_v,\psi_v} (s)= s \cdot t(s, v) - \cE_{f_v,\phi_v,\psi_v}(t(s, v)).
\end{equation}
By taking a smaller neighborhood $U$ of $v_{\ast}$ if necessary,  there exists an open interval $J$ with the equation (\ref{eq.var.rate}) holds for $s \in \overline{J}$ and $v \in U$.
Now, consider  the skew-product
$$
\begin{array}{ccc}
F: V \times \overline{J} & \to & V \times \R\\
(v,t) & \mapsto & (v, \cE'_{f_v,\phi_v,\psi_v}(t)).
\end{array}
$$
It is injective because it is strictly decreasing along the fibers.
As $V\times \overline{J}$ is a compact metric space, then $F$ is a homeomorphism on the image $F(V\times \overline{J})$. In fact it is a diffeomorphism $C^{r-1}$. Thus, applying the Implicit Function Theorem we have that for all $(v,s)\in F(V\times \overline{J})$ there exists a unique $t=t(v,s)$, depending $C^{r-1}$ with respect to $(v,s)$, so such $F(v,t(v,s))=(v, s)$ and
$s=\cE'_{f_v,\phi_v,\psi_v}(t)$. Then, It follows from equation \eqref{eq.var.rate} that $(v, s) \mapsto I_{f_v,\phi_v,\psi_v}(s)$ is $C^{r-1}$.
\end{proof}


\subsection{Stability of the statistical laws}

This subsection is devoted to the a proof of  Theorem \ref{thm:stalaw1} and Corollary \ref{coho1}.

Before we prove Theorem~\ref{thm:stalaw1}, we  prove a version of  part of the Livsic's theorem. Such theorem 
ensures regularity of the solution of a cohomological equation
in the context of a  hyperbolic dynamics. 

\begin{proposition} \label{propcohomo}
Let $f \in \mathcal{D}^{r}$ be and $\psi \in C^{r}(M, \R)$. If $S_{n}\psi(p) = 0$ for all $p$ such that $f^{n}(p) = p$ then there exists $u \in C^{r}(M , \R)$ such that $\psi = u\circ f - u$.
\end{proposition}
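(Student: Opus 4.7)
The plan is to first invoke the classical continuous Livsic theorem for expanding maps to produce a continuous solution $u$ of the cohomological equation, and then to upgrade its regularity from $C^{0}$ to $C^{r}$ by a local argument near each periodic point, exploiting the contraction of the local inverse branches of suitable iterates of $f$.

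First, since $f \in \mathcal{D}^{r}$ is expanding on a compact connected manifold and $\psi \in C^{r}(M,\mathbb{R})$ is in particular Hölder continuous, the hypothesis $S_{n}\psi(p) = 0$ for every $n \ge 1$ and every $p$ with $f^{n}(p) = p$ is precisely the assumption of the classical Hölder Livsic theorem for expanding maps. This yields a continuous function $u : M \to \mathbb{R}$, unique up to an additive constant, satisfying $\psi = u \circ f - u$ pointwise; everything that follows is devoted to showing that this $u$ is in fact $C^{r}$.

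Next I would upgrade the regularity locally near each periodic point. Fix $p$ with $f^{n}(p) = p$ and let $g$ be the unique local $C^{r}$ inverse branch of $f^{n}$ defined on a neighborhood $V$ of $p$ with $g(p) = p$. Since $f$ is $\sigma$-expanding, $\|Dg\|_{C^{0}(V)} \le \sigma^{-n}$, so $g$ is a uniform contraction and $g^{k}(y) \to p$ for every $y \in V$. Iterating $S_{n}\psi = u \circ f^{n} - u$ at the points $g^{i+1}(y)$ gives
\[
u(y) - u(g^{k}(y)) \;=\; \sum_{i=0}^{k-1} S_{n}\psi(g^{i+1}(y)),
\]
and letting $k \to \infty$, using continuity of $u$ and $S_{n}\psi(p) = 0$, yields the closed form $u(y) = u(p) + \sum_{i=0}^{\infty} S_{n}\psi(g^{i+1}(y))$ on $V$ once convergence in $C^{r}$ is established. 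To this end I would prove by induction the bound $\|D^{s} g^{i}\|_{C^{0}(V)} \le C_{s}\, \sigma^{-ni}$ for every $1 \le s \le r$, which holds for an iterated $C^{r}$-contraction because the chain-rule recursion for $D^{s} g^{i+1}$ telescopes geometrically against the factor $\|Dg\| \le \sigma^{-n}$. Then Faà di Bruno's formula applied to $S_{n}\psi \circ g^{i+1}$ gives $\|D^{s}(S_{n}\psi \circ g^{i+1})\|_{C^{0}(V)} \le C\, \sigma^{-n(i+1)}$ for $1 \le s \le r$, so the series converges in $C^{r}(V,\mathbb{R})$ and $u|_{V} \in C^{r}(V,\mathbb{R})$.

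Finally, since $C^{r}$-expanding maps on a compact connected manifold have a dense set of periodic points (a standard consequence of the specification property recalled in the excerpt), the neighborhoods $V_{p}$ above cover $M$, and $C^{r}$-smoothness being a local condition yields $u \in C^{r}(M,\mathbb{R})$. The main obstacle I anticipate is the inductive geometric bound $\|D^{s} g^{i}\|_{C^{0}(V)} \le C_{s}\,\sigma^{-ni}$: the chain-rule expansion for $D^{s} g^{i}$ produces many summands whose number grows combinatorially in $s$ and $i$, and one must verify that the contraction factor dominates every such summand and rules out spurious polynomial-in-$i$ growth. Once this uniform geometric control on derivatives of iterated contractions is in hand, the Faà di Bruno expansion of $S_{n}\psi \circ g^{i+1}$ is summable term by term in $C^{r}$, and the remainder of the argument is routine.
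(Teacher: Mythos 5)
Your proposal is correct but follows a genuinely different route from the paper's proof, and both routes work. The paper starts from the same Hölder Livsic theorem (via \cite{PU10}, Prop.~3.4.5) to get a Hölder continuous $u$, but then upgrades regularity globally through the transfer operator: writing $\psi = u\circ f - u$ as $e^{\psi}e^{u} = e^{u\circ f}$, one checks that $\mathcal{L}_{f,\psi}(e^{u}) = \deg(f)\,e^{u}$, so $e^{u}$ is a positive eigenfunction of $\mathcal{L}_{f,\psi}$; the $C^{r}$ spectral gap established earlier in the paper forces $e^{u}$ to be a scalar multiple of the $C^{r}$ eigenfunction $h_{\psi}$, hence $u = \log h_{\psi} + \text{const} \in C^{r}$. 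Your approach instead bootstraps regularity locally near each periodic point by expanding $u$ as the convergent series $u(y) = u(p) + \sum_{i\ge 1} S_{n}\psi(g^{i}(y))$ in terms of the contracting local inverse branch $g$ of $f^{n}$ fixing $p$, and verifying $C^{r}$-convergence via the uniform geometric bound $\|D^{s}g^{i}\| \le C_{s}\sigma^{-ni}$. That bound does hold: the telescoping recursion $\|D^{s}g^{i}\| \le \sigma^{-n}\|D^{s}g^{i-1}\| + K\sigma^{-2n(i-1)}$ (the $K$-term collecting all Faà di Bruno contributions with at least two factors of $Dg^{i-1}$) sums geometrically and rules out the polynomial-in-$i$ growth you were worried about, so the concern is resolvable. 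Two small points worth stating explicitly: the neighborhoods $V_{p}$ can be taken as balls of a \emph{uniform} radius $\delta_{0}$ (the local injectivity radius for inverse branches, independent of $p$ and the period $n$), which is what makes density of periodic points yield a cover of $M$; and the $s=0$ term of the series needs the hypothesis $S_{n}\psi(p)=0$ for geometric decay, whereas the $s\ge 1$ terms decay automatically. Your argument is more elementary and self-contained (it does not require the $C^{r}$ spectral gap machinery), while the paper's proof is shorter given the infrastructure already built; the paper's approach also hands you $u$ as an explicit logarithm of a normalized eigenfunction, which is convenient for the perturbative statements elsewhere in the paper.
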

\begin{proof}
This result is well known for diffeomorphisms (see  \cite{KKPW89}). 

So, for the expanding maps case, let us write down an independent proof.

By Proposition 3.4.5 in \cite{PU10}, there exists a H\"older function $u \in C^{\eta}(M, R)$ such that 

\begin{equation}
\psi= u \circ f - u \Rightarrow e^\psi \cdot e^u = e^{u \circ f}. \label{eqhom}
\end{equation}

Let us consider the transfer operator $\mathcal{L}_{f, \psi}$. It is classical fact that It has spectral gap as an operator acting 
in the space of H\"older functions. As we have seen in the previous section of this paper, it  has also spectral gap as an operator acting in $C^r$. 

Using equation \ref{eqhom}, we have  that
$$
\mathcal{L}_{f, \psi}(e^u)(x)= \sum_{y \in f^{-1}(x)} e^{\psi(y)} \cdot e^{u(y)}= \sum_{y \in f^{-1}(x)} e^{u \circ f(y)}= e^{u(x)} \cdot \deg(f) \Rightarrow \mathcal{L}^n_{f, \psi}(e^u)= e^u \deg(f)^n.
$$
Since $\psi$ is $\eta-$H\"older, by \cite{PU10} $\mathcal{L}_{f, \psi}$ has the spectral gap property as an operator acting in $C^{\eta}(M, R)$. Therefore 
$\frac {\mathcal{L}_{f, \psi}}{\rho(\psi)^n }(e^u)$ converges 
uniformly to $\int_M e^u d \nu_\psi \cdot h_\psi$, 
where $\nu_\psi$, $\rho(\psi)$, $h_\psi$,  are respectively,  the conformal probability measure,  the dominant eigenvalue and its corresponding normalized eigenvector, all of them, associated to  of $\mathcal{L}_{f, \psi}$. Note that $h_\psi \in C^{\eta}(M, R)$ is 
strictly positive,  and $\int_M e^u d\nu_\psi \geq \inf e^u> 0$.

Since $\psi \in C^r$, we also have that $\mathcal{L}_{f, \psi}|_{C^r}$ has spectral gap property, with the same eigenvalue  $\rho(\psi)$ and normalized eigenvector $h_\psi$, which is $C^r$.

Note that, since
$
\frac{e^u \cdot \deg(f) ^n}{\rho(\psi) ^n}= \frac{\mathcal{L}^n_{\psi}(e^u)}{\rho(\psi) ^n}  
$
converges to $\int_M e^u d\nu_\psi \cdot h_\psi$,
we conclude that 
$\deg(f) = \rho(\psi)$ and $e^{u}= \int_M e^u d\nu_\psi \cdot h_\psi$,
and so
$$
u = \log\Big(\int_M e^u d\nu_\psi \Big )+ \log( h_\psi) \in C^r.
$$

\end{proof}

\begin{proof}[Proof of Theorem~\ref{thm:stalaw1}]

We prove first that $\psi$ is cohomologous to a constant in $L^{2}(\mu_{f,\phi})$ if and only if, $\psi$ is cohomologous to a constant in $C^{r}(M , \R)$. Thus the first part of the corollary follows from the already known Central Limit Theorem  for expanding dynamics (see e.g. \cite{PU10}). Let then $\psi$ cohomologous to a constant in $L^{2}(\mu_{f,\phi})$. Note that
$$
\begin{array}{ccc}
\mathcal{E}_{f,\phi,\psi}''(0) & = & \lim_{n \to +\infty}\frac{1}{n}( \int (S_{n}\psi)^{2} d\mu_{f,\phi} - (\int S_{n}\psi d\mu_{f,\phi})^{2}) \\
                               & = & \lim_{n \to +\infty}\frac{1}{n}\int(S_{n}\tilde{\psi})^{2} d\mu_{f,\phi} \\
                               & = & \int \tilde{\psi}^{2} d\mu_{f,\phi} + 2\lim_{n \to +\infty}\int \sum_{j=1}^{n-1}\Big(1 - \frac{j}{n}\Big)
\tilde{\psi} \circ f^{j}\tilde{\psi} d\mu_{f,\phi} \\
                               & = & \int \tilde{\psi}^{2} d\mu_{f,\phi} + 2\sum_{j=1}^{\infty}\int \tilde{\psi} \circ f^{j}\tilde{\psi} d\mu_{f,\phi} \\
                              & = & \sigma^{2}_{f,\phi}(\psi).
\end{array}
$$
By the dichotomy of the Central Limit Theorem, we have that $\mathcal{E}_{f,\phi,\psi}''(0) = 0$ and thus $\mathcal{E}_{f,\phi,\psi}$ is not strictly convex. So $\psi$ is cohomologous to a constant.
Hence $S_{n}\psi(x) = n\int \psi d\mu_{f,\phi}$ for all periodic point
$x$ of period $n$. Thus, using the previous proposition and conclude that $\psi = u\circ f - u+\int \psi
\,d\mu_{f,\phi}$ for some function $u \in \mathcal{C}^{r}(M , \R)$. So we proved the first part of the theorem.

Now we prove the second part of the theorem. The regularity of $\mathcal{D}^{r}\times C^{r}(M, \R) \times C^{r}(M, \R) \ni(f,\phi,\psi) \mapsto m_{f,\phi}(\psi)$ is a direct application of the Theorem~\ref{thm:linres}. Finally, fix $(f_{0} , \phi_{0}) \in \mathcal{D}^{r}\times C^{r}(M, \R)$. For each $(f , \phi) \in \mathcal{D}^{r}\times C^{r}(M, \R)$
define $E_{0,f,\phi} := \ker \nu_{f,\phi}
\cap C^{r}(M , \R)$, and $T_{f,\phi}g = (g - \int gd\mu_{f,\phi}) \cdot h_{f,\phi}$ for all $g \in C^{r}(M , \R)$. So $T_{f,\phi}$ it is a bounded operator whose restriction  to
$E_{0,f_{0},\phi_{0}}$ is a linear isomorphism on $E_{0,f,\phi}$, with $T_{f,\phi}^{-1}g = \frac{g}{h_{f,\phi}} - \int \frac{g}{h_{f,\phi}}d\nu_{f_{0},\phi_{0}}$.
Given $\psi \in C^{r}(M , \R)$ we already know that $\sigma_{f,\phi}^{2}(\psi)
        = \int \tilde \psi^{2}\;d\mu_{f,\phi} + 2\sum_{n = 1}^{\infty}\int \tilde \psi(\tilde \psi\circ f^{n}) \;d\mu_{f,\phi}$, where
        $\tilde \psi=\psi-\int\psi \;d\mu_{f,\phi}$. By a simple computation one obtains
$$
\int \tilde \psi(\tilde \psi\circ f^{n}) \;d\mu_{f,\phi} = \int\tilde{\psi} \tilde{\mathcal{L}}^{n}_{f,\phi}(\tilde{\psi} \cdot h_{f,\phi})\;d\nu_{f,\phi} =
\int\tilde{\psi} T_{f,\phi}^{-1}\tilde{\mathcal{L}}^{n}_{f,\phi}T_{f,\phi}T_{f,\phi}^{-1}(\tilde{\psi}\cdot h_{f,\phi}) \;d\mu_{f,\phi}.
$$
Thus
$$
\sigma_{f,\phi}^{2}(\psi) = -\int \tilde \psi^{2}\;d\mu_{f,\phi} + 2 \int \tilde \psi (I - T_{f,\phi}^{-1}\tilde{\mathcal{L}}^{n}_{f,\phi}T_{f,\phi|E_{0,f_{0},\phi_{0}}})^{-1}
T_{f,\phi}^{-1}(\tilde{\psi} \cdot h_{f,\phi})\;d\mu_{f.\phi}.
$$
Using the same ideas contained in the proof of the Theorem \ref{thm:linres}, one can apply the Theorem~\ref{dif1} concluding that
$\mathcal{D}^{r}\times C^{r}(M, \R)\ni (f,\phi) \mapsto (I - T_{f,\phi}^{-1}\tilde{\mathcal{L}}^{n}_{f,\phi}T_{f,\phi|E_{0,f_{0},\phi_{0}}})^{-1}$ is $C^{r-1}$.
This completes the proof of the theorem.

\end{proof}

\begin{proof}[Proof of Corollary~\ref{coho1}]
In the proof of the previous theorem, we proved that $\psi$ is cohomologous to a constant in $L^{2}(\mu_{f,\phi})$ if, and only if, $\psi$ is cohomologous to a constant in $C^{r}$. Furthermore, $\psi$ is cohomologous to a constant in $L^{2}(\mu_{f,\phi})$ if, and only if, $\sigma^2_{f,\phi}(\psi) =0$, by previous theorem. Thus the corollary follows from the continuity of $\sigma^2_{f,\phi}(\psi)$ with respect  to $f, \phi$ and $\psi$, which is guaranteed by Theorem~\ref{thm:linres}.
\end{proof}


\subsection{Multifractal analysis}

This subsection is devoted to the proof of Corollary~\ref{corexp}.

\begin{proof}[Proof of Corollary~\ref{corexp}]

From what we have already commented in section~\ref{sec:multanal}, the corollary is a direct consequence of Theorem~\ref{thm:differentiability.LDP1} since we prove that $Y:= \{(v,c) \in V \times \R^+_0 : c < \sup_{\eta \in \mathcal{M}_{1}(f_{v})}|\int \psi_{v} d\mu_{f_{v},\phi_{v}} - \int \psi_{v} d\eta|\}$ is an open set. As we already know that $v \mapsto \mu_{f_{v},\phi_{v}}$ is continuous, to prove that $Y$ is open it is enough to prove that:  fixed
$v_{\ast} \in V$ and $\eta \in \mathcal{M}_{1}(f_{v_{\ast}})$, given $\epsilon > 0$ there exists a neighborhood $\hat{V}$ of $v_{\ast}$ such that if $v\in \hat{V}$ then there exists $\eta_{v} \in \mathcal{M}_{1}(f_{v})$
with $|\int \psi_{v_{\ast}}d\eta_{v} - \int\psi_{v_{\ast}} d\eta| \leq \epsilon$.

Let us prove this fact. Fix $v_{\ast} \in V$. Given $\epsilon > 0$,
by specification property of expanding dynamics, there exists $N(\epsilon)$ so that 
any finite sequence of finite pieces of  orbit of $f_{v_{\ast}}$ can be shadowed by a point $x \in M$
using $f_{v_{\ast}}$, except possibily for gaps of $N(\epsilon)$ iterates between the pieces of orbit. In general $N(\epsilon)$ depends on the dynamics. However,  as our dynamics are expanding, as long as we choose a small neighborhood $V_{1}$ of $v_{\ast}$, we can assume that $N(\epsilon)$ is the same for all dynamics $f_{v}$,
$v \in V_{1}$.  By Birkhoff's ergodic theorem,
there exists a $x \in M$ and $n > 4\frac{N(\epsilon)}{\epsilon}$ such that
$
|\frac{1}{n}\sum_{i = 0}^{n-1}\psi_{v_{\ast}}(f^{i}_{v_{\ast}}(x)) - \int \psi_{v_{\ast}}
d\eta| < \frac{\epsilon}{3}.
$
As the parametrization is continuous there exists a neighborhood
$\hat{V} \subset V_{1}$ of $v_{\ast}$ so that $d(f^{j}_{v_{\ast}}(y) , f^{j}_{v}(y)) < \frac{\epsilon}{3||\psi||_{1}},$ for all $j \in \{1 ,\ldots, n-1\}$, $ y \in M$ and $v \in \hat{V}$.
Thus, for each $v \in \hat{V}$ and $l \in \N$ there exists a $y_{v,l} \in M$ such that when iterate $y_{v,l}$ by $f_{v}$ we shadow  $l$ times the orbit piece $x, \ldots, f_{v}^{n-1}(x)$, with a bounded error in each jump
of  $N(\epsilon)$ iterates.
  Taking an accumulation point of the sequence of probabilities $\Big(\frac{1}{l(n + N(\epsilon)-1)}\sum_{i = 0}^{l(n + N(\epsilon)-1) - 1}\delta_{{f^{i}_{v}} (y_{v,l})} \Big)_{l \in \N}$ we obtain an  $f_{v}-$invariant probability $\eta_{v}$
with $|\int \psi_{v_{\ast}}d\eta_{v} - \int\psi_{v_{\ast}} d\eta| \leq \epsilon$ and therefore we conclude that $Y$ is open.
This completes the proof of the corollary.
\end{proof}


\vspace{.3cm}
\subsection*{Acknowledgements.}
This work was partially supported by CNPq and Capes and is
part of the first author's PhD thesis at Federal University of Bahia. The authors are deeply
grateful to P. Varandas for useful comments.

\bibliographystyle{alpha}

\begin{thebibliography}{39}

\bibitem[AK11]{AK11}
A. Avila and A. Kocsard.
\newblock  Cohomological equations and invariant distributions for minimal circle diffeomorphisms
 \newblock {\em   Duke Math. J. } 158, 3  501-536, 2011.


\bibitem[AM06]{AM06}
A. Arbieto and C. Matheus.
\newblock Fast decay of correlations of equilibrium states of open classes of non-uniformly expanding maps and potentials.
\newblock {\em preprint http://www.preprint.impa.br}, 2006.

follows
\bibitem[Ba00]{Ba00}
V. Baladi.
\newblock Positive transfer operators and decay of correlations.
\newblock {\em World Scientific Publishing Co. Inc.}, 2000.

\bibitem[BBS14]{BBS14}
V.~Baladi and M.~Benedicks and D.~Schnellmann.
\newblock On the H\"older regularity of the SRB measure in the quadratic family.
\newblock  {\em Inventiones mathematicae}, 2014.


\bibitem[BS08]{BS08}
 V. Baladi and D. Smania.
\newblock Linear response formula for piecewise expanding unimodal maps.
\newblock {\em Nonlinearity}, 21: 677-711, 2008.

\bibitem[BS09]{BS09}
 V. Baladi and D. Smania
 \newblock Analyticity of the SRB measure for holomorphic families of quadratic-like Collet-Eckmann maps.
  \newblock {\em Proc. Amer. Math. Soc.}, 137: 1431-1437, 2009.

\bibitem[BS12]{BS12}
V. Baladi and D. Smania.
\newblock Linear response for smooth deformations of generic nonuniformly hyperbolic unimodal maps.
\newblock {\em Annales scientifiques de l'ENS}, 45(6), 861-926, 2012.



\bibitem[BCV16]{BCV16}
T. Bomfim, A. Castro and P. Varandas.
\newblock Differentiability of thermodynamical quantities in non-uniformly expanding dynamics.
\newblock {\em Advances in Math.}, 2016.

\bibitem[BV15]{BV15}
T. Bomfim and P. Varandas
\newblock Multifractal analysis for weak Gibbs measures: from large deviations to irregular sets.
\newblock {\em Ergodic Theory and Dynamical Systems}, 37, 79--102, 2015.


\bibitem[Bow75]{Bo75}
R.~Bowen.
\newblock {\em Equilibrium states and the ergodic theory of {A}nosov
  diffeomorphisms}, volume 470 of {\em Lect. Notes in Math.}
\newblock Springer Verlag, 1975.

\bibitem[BR75]{BR75}
R.~Bowen and D.~Ruelle.
\newblock The ergodic theory of {A}xiom {A} flows.
\newblock {\em Invent. Math.}, 29:181--202, 1975.


\bibitem[BL07]{BL07}
O.~Butterley and C.~Liverani
\newblock Smooth {A}nosov flows: correlation spectra and stability.
\newblock {\em J. Mod. Dyn.}, 1, 301--322, 2007.


\bibitem[Cas02]{Cas02} A.~Castro,  Backward inducing and exponential decay of
correlations for partially hyperbolic attractors, {\em Israel J. Math.}
{\bf 130}, 29-75 (2002).

\bibitem[Cas04]{Cas04} A.~Castro, Fast mixing for attractors with mostly
contracting central direction, {\em  Ergod. Th. {\&} Dynam. Sys.} {\bf
24}, 17-44  (2004).


\bibitem[CN17]{CN17} A.~Castro, T.~Nascimento. Statistical properties of the maximal  entropy
measure for partially hyperbolic attractors. {\em  Ergod. Th. {\&} Dynam. Sys.}, 37, 4, 1060--1101, 2017.

\bibitem[CV13]{CV13}
A.~Castro and P.~Varandas.
\newblock Equilibrium states for non-uniformly expanding
maps: decay of correlations and strong stability.
\newblock {\em Annales de l Institut Henri Poincar\'e - Analyse non Lineaire},
30:2, 225--249, 2013.


\bibitem[CRL98]{CRL98}
H.~Comman e J.~Rivera-Letelier.
\newblock Large deviations principles for non-uniformly hyperbolic rational
  maps.
\newblock {\em Ergodic Theory and Dynamical Systems} 31, 321--349, 2011.


\bibitem[DK01]{DK01}
M.~Denker and M.~Kesseb\"{o}hmer.
\newblock Thermodynamical formalism, large deviation and multifractals.
\newblock {\em Stochastic Climate Models, Progress in Probability} 49, 159-169, 2001.


\bibitem[Dol04]{Dol04}
D. Dolgopyat.
\newblock On differentiability of SRB states for partially hyperbolic systems.
\newblock {\em Invent. Math.}, 155, 389--449, 2004.


\bibitem[Fr79]{Fr79}
J.~Franks.
\newblock Manifolds of $C^r$ Mappings and Applications to Differentiable Dynamical Systems.
\newblock {\em Studies in Analysis. Advances in Mathematics Supplementary Studies}, vol 4, pages 271-- 290.
  Academic Press, 1979.

\bibitem[GL06]{GL06}
S.~Gouezel and C.~Liverani.
\newblock Banach spaces adapted to Anosov systems
\newblock {\em Ergod. Th. {\&} Dynam. Sys. }, 26, 189--217, 2006.

\bibitem[GL08]{GL08}
S.~Gouezel and C.~Liverani.
\newblock Compact locally maximal hyperbolic sets for smooth maps: fine statistical properties. 
\newblock {\em J. Differential Geom. }, 79 (3), 433--477, 2008.

\bibitem[GP17]{GP17}
S.~Galatolo, M.~Pollicott.
\newblock Controlling the statistical properties of expanding maps
\newblock {\em Nonlinearity }, vol. 30, 7, 2737-2751, 2017.


\bibitem[Ji12]{Ji12}
M.~Jiang,
\newblock Differentiating potential functions of {SRB} measures on hyperbolic attractors,
\newblock {\em Ergod. Th. {\&} Dynam. Sys.}, 32: 4, 1350--1369, 2012.



\bibitem[K95]{K95}
T. Kato.
\newblock Perturbation theory for linear operators.
\newblock {\em Classics in mathematics}, Berlin: Springer, 619 p., 1995.

\bibitem[KKPW89]{KKPW89}
A.~Katok, G.~Knieper, M.~Pollicott M and H.~Weiss.
\newblock Differentiability and analyticity of topological entropy for Anosov and geodesic flows.
\newblock \emph{Inventiones Math. }, 98 581--597, 1989.

\bibitem[Li13]{Li13}
C.~Liverani.
\newblock Multidimensional expanding maps with singularities:
a pedestrian approach,
\newblock {\em Ergod. Th. {\&} Dynam, Syst.}, 33, 168--182, 2013.

\bibitem[Li95]{Li95}
C.~Liverani.
\newblock Decay of correlations,
\newblock {\em Annals of Math.}, 142, 239--301, 1995.


\bibitem[LSV98]{LSV98}
C. Liverani, B. Saussol and S. Vaienti.
\newblock Conformal measure and decay of correlation for covering weighted systems
\newblock {\em Ergod. Th. Dynam. Sys.}, 18:6, 1399-1420, 1998.


\bibitem[PU10]{PU10}
Feliks Przytycki and Mariusz Urbanski.
\newblock Conformal fractals: ergodic theory methods.
\newblock {\em London Mathematical Society lecture note series}, 371,
New York: Cambridge University Press, 354 p, 2010.


\bibitem[Rue89]{ruelle89}
D. Ruelle.
\newblock The Thermodynamic Formalism for Expanding Maps.
\newblock {\em Commun. Math. Phys.}, 125, 239-262, 1989.

\bibitem[Rue97]{Rue97}
D.~Ruelle.
\newblock Differentiation of SRB states.
\newblock {\em Comm. Math. Phys.}, 187, 227--241, 1997.

\bibitem[Rue03]{}
D. Ruelle.
\newblock Correction and complements.
\newblock {\em Commun. Math. Phys.}, 234, 185-90, 2003.

\bibitem[Rue05]{Rue05}
D.~Ruelle.
\newblock Differentiating the absolutely contimuous invariant measure of an interval map f with respect to f.
\newblock {\em Commun. Math. Phys. }, 258, 445--453 (2005).


\bibitem[Rue09]{Rue09}
D.~Ruelle.
\newblock A review of linear response theory for general differentiable dynamical systems.
\newblock {\em Nonlinearity}, 22, 855--870,2009.


\bibitem[Se17]{Se17}
J.~Sedro.
\newblock A regularity result for fixed points, with applications to  linear response 
\newblock {\em Nonlinearity}, to appear, 2017.


\bibitem[Tho9]{Tho9}
D. Thompson.
\newblock A variational principle for topological pressure for certain non-compact sets.
\newblock {\em J. London Math. Soc.}, v. 80, no. 1, p. 585-602, 2009.




\bibitem[VV10]{VV10}
P. Varandas and M. Viana.
\newblock Existence, uniqueness and stability of equilibrium states for
non-uniformly expanding maps.
\newblock {\em Annales de l'Institut Henri Poincar\'e- Analyse Non-Linéaire}, 27, 555-593, 2010.

\bibitem[Vi97]{Vi97}
M.~Viana.
\newblock Stochastic dynamics of deterministic systems,
\newblock \emph{Col\'oquio Brasileiro de Matem\'atica}, 1997.

\bibitem[W13]{W13}
A. Wilkinson.
\newblock The cohomological equation for partially hyperbolic diffeomorphisms
\newblock{Asterisque }, 358 75--165, 2013.



\end{thebibliography}

\end{document}